\numberwithin{equation}{section}
\numberwithin{figure}{section}
\theoremstyle{plain}
\newcounter{cprop}[section]
\newtheorem{definition}[cprop]{Definition}
\newtheorem{remark}[cprop]{Remark}
\newtheorem{lemma}[cprop]{Lemma}
\newtheorem{proposition}[cprop]{Proposition}
\newtheorem{theorem}[cprop]{Theorem}
\newcommand{\tfy}{\tilde{y}} 
\newcommand{\mcA}{\mathcal{A}}
\newcommand{\mcB}{\mathcal{B}}
\newcommand{\mcH}{\mathcal H}
\newcommand{\mcL}{\mathcal{L}}
\newcommand{\mcP}{\mathcal{P}}
\newcommand{\mcV}{\mathcal{V}}
\newcommand{\mbD}{\mathbb{D}}
\newcommand{\mbH}{\mathbb{H}}
\newcommand{\mbL}{\mathbb{L}}
\newcommand{\R}{\mathbb{R}}
\newcommand{\N}{\mathbb{N}}
\newcommand{\D}{\Delta}
\newcommand{\restr}[1]{|_{#1}} 
\newcommand{\diff} {\mathrm{d}}
\newcommand{\esssup}{\text{ess sup}}
\DeclareMathOperator*{\supp}{supp}
\DeclareMathOperator*{\spann}{span}
\DeclareMathOperator*{\divv}{div}
\title{The Tamed MHD Equations}
\author{Andre Schenke}
\email{aschenke@math.uni-bielefeld.de}
\address{Fakult\"at f\"ur Mathematik, Universit\"at Bielefeld, 33615 Bielefeld, Germany}
\keywords{Tamed MHD equations, magnetohydrodynamics, MHD equations, porous media}
\subjclass{76W05, 76S05; 35K91, 76D03}
\begin{document}

\begin{abstract}
	We study a regularised version of the magnetohydrodynamics (MHD) equations, the tamed MHD (TMHD) equations. They are a model for the flow of electrically conducting fluids through porous media. We prove existence and uniqueness of TMHD on the whole space $\mathbb{R}^{3}$, that smooth data give rise to smooth solutions, and show that solutions to TMHD converge to a suitable weak solution of the MHD equations as the taming parameter $N$ tends to infinity. Furthermore, we adapt a regularity result for the Navier-Stokes equations to the MHD case.
\end{abstract}
\maketitle
\section{Introduction}
\subsection{Magnetohydrodynamics}
The magnetohydrodynamics (MHD) equations describe the dynamic motion of electrically conducting fluids. They combine the equations of motion for fluids (Navier-Stokes equations) with the field equations of electromagnetic fields (Maxwell's equations), coupled via Ohm's law. In plasma physics, the equations are a macroscopic model for plasmas in that they deal with averaged quantities and assume the fluid to be a continuum with frequent collisions. Both approximations are not met in hot plasmas. Nonetheless, the MHD equations provide a good description of the low-frequency, long-wavelength dynamics of real plasmas. In this thesis, we consider the incompressible, viscous, resistive equations with homogeneous mass density, and regularised variants of it. In dimensionless formulation, the MHD equations are of the following form:
\begin{equation}\label{intro_eq_DMHD}
	\begin{split}
				\frac{\partial \bm{v} }{\partial t} &= \frac{1}{Re} \Delta \bm{v} - \left( \bm{v} \cdot \nabla \right) \bm{v}  + S \left( \bm{B} \cdot \nabla \right) \bm{B} + \nabla \left( p + \frac{S | \bm{B} |^2}{2} \right), \\
		\frac{\partial \bm{B}}{\partial t} &= \frac{1}{Rm} \Delta \bm{B} - \left( \bm{v} \cdot \nabla \right) \bm{B} + (\bm{B} \cdot \nabla) \bm{v} \\
		\divv \bm{v} &= 0, \quad \divv \bm{B} = 0.
	\end{split}
\end{equation}
Here, $\bm{v} = \bm{v}(x,t)$, $\bm{B} = \bm{B}(x,t)$ denote the velocity and magnetic fields, $p = p(x,t)$ is the pressure, $Re > 0$, $Rm > 0$ are the Reynolds number and the magnetic Reynolds number and $S >0$ denotes the Lundquist number (all of which are dimensionless constants). The two last equations concerning the divergence-freeness of the velocity and magnetic field are the incompressibility of the flow and Maxwell's second equation. For simplicity, in the remainder of the paper, we set $S= Rm = Re = 1$.

Mathematical treatment of the deterministic MHD equations reaches back to the works of G. Duvaut and J.-L. Lions \cite{DL72} and M. Sermange and R. Temam \cite{ST83}. Since then, a large amount of papers have been devoted to the subject. We only mention several interesting regularity criteria \cite{CW10,HX05a,HX05b,KL09} and the more recent work on non-resistive MHD equations ($Rm = \infty$) by C.L. Fefferman, D.S. McCormick J.C. Robinson and J.L. Rodrigo on local existence via higher-order commutator estimates \cite{FMRR14,FMRR17}.

In this paper, we want to study a regularised version of the MHD equations on the whole space $\R^{3}$, which we call the \emph{tamed MHD equations} (TMHD), following M. R\"ockner and X.C. Zhang \cite{RZ09a}. They arise from \eqref{intro_eq_DMHD} by adding two extra terms (the \emph{taming terms}) that act as restoring forces:
\begin{equation*}
	\begin{split}
		\frac{\partial \bm{v} }{\partial t} &=  \Delta \bm{v} - \left( \bm{v} \cdot \nabla \right) \bm{v}  +  S\left( \bm{B} \cdot \nabla \right) \bm{B} - \nabla \left( p + \frac{S | \bm{B} |^2}{2} \right) - g_{N}(| (\bm{v}, \bm{B}) |^{2}) \bm{v}, \\
		\frac{\partial \bm{B}}{\partial t}
		&= \Delta \bm{B} - \left( \bm{v} \cdot \nabla \right) \bm{B} + (\bm{B} \cdot \nabla) \bm{v} + \nabla \pi - g_{N}(| (\bm{v}, \bm{B}) |^{2}) \bm{B}.
	\end{split}
\end{equation*}
The taming terms are discussed in more detail in Section \ref{intro_ssec_tamed}, and we discuss the results of this work in Section \ref{intro_ssec_results}. The extra term $\nabla \pi$, which we call the magnetic pressure, will be explained in Section \ref{DTMHD_sssec_MPP}. However, before we study the tamed equations, we want to give an overview of regularisation schemes for the Navier-Stokes and the MHD equations to put our model into the broader context of the mathematical literature.

\subsubsection{Damped Navier-Stokes Equations (or BFeD Models)}\label{DTMHD_sssec_BFD}
A related model to the tamed Navier--Stokes equations are the so-called \emph{(nonlinearly) damped Navier-Stokes equations}:
\begin{align*}
	\frac{\partial \bm{v}}{\partial t} = \Delta \bm{v} - (\bm{v} \cdot \nabla)\bm{v} - \nabla p - \alpha |\bm{v}|^{\beta-1} \bm{v},
\end{align*}
with $\alpha > 0$ and $\beta \geq 1$. The damping term $- \alpha |\bm{v}|^{\beta-1} \bm{v}$ models the resistence to the motion of the flow resulting from physical effects like porous media flow, drag or friction or other dissipative mechanisms (cf. \cite{CJ08} and Section \ref{DTMHD_sssec_physical_motivation}). It represents a restoring force, which for $\beta = 1$ assumes the form of classical, linear damping, whereas $\beta > 1$ means a restoring force that grows superlinearly with the velocity (or magnetic field). X.J. Cai and Q.S. Jiu \cite{CJ08} first proved existence and uniqueness of a global strong solution for $\frac{7}{2} \leq \beta \leq 5$. This range was lowered down to $\beta \in (3,5]$ by Z.J. Zhang, X.L. Wu and M. Lu in \cite{ZWL11}. Furthermore, they considered the case $\beta = 3$ to be critical \cite[Remark 3.1]{ZWL11}. Y. Zhou in \cite{Zhou12} proved the existence of a global solution for all $\beta \in [3,5]$. For the case $\beta \in [1,3)$, he established regularity criteria that ensure smoothness. Uniqueness holds for any $\beta \geq 1$ in the class of weak solutions. Existence, decay rates and qualitative properties of weak solutions were also investigated by S.N. Antontsev and H.B. de Oliveira \cite{AO10}.

The Brinkman-Forchheimer-extended Darcy model (BFeD model, cf. Section \ref{DTMHD_sssec_physical_motivation}) is a related model for flow of fluids through porous media and uses the damping terms for $\beta \in \{ 1,2,3 \}$
\begin{align*}
	\frac{\partial \bm{v}}{\partial t} = \Delta \bm{v} - (\bm{v} \cdot \nabla)\bm{v} - \nabla p - \alpha_{0} \bm{v} - \alpha_{1} P |\bm{v}| \bm{v} -\alpha_{2} P |\bm{v}|^{2} \bm{v}.
\end{align*}
The first problems studied were continuous dependence of the solutions on their parameters, e.g. in F. Franchi, B. Straughan \cite{FS03}. V.K. Kalantarov and S. Zelik \cite{KZ12} and P.A. Markowich, E.S. Titi and S. Trabelsi \cite{MTT16} proved existence and uniqueness of a weak solution for Dirichlet and periodic boundary conditions, respectively. Long-time behaviour and existence of global attractors have been studied by several authors \cite{OY09, Ugurlu08, WL08, YZZ12}. An anisotropic version of the equations was studied by H. Bessaih, S. Trabelsi and H. Zorgati \cite{BTZ16}.

The flow of electrically conducting fluids through porous media, modelled by MHD equations with damping, was studied first by Z. Ye in \cite{Ye16}. He considered the system with nonlinear damping in the equations for both the velocity field (with nonlinear damping parameter $\alpha$) and the magnetic field (with paramter $\beta$) and he proved existence and uniqueness of global strong solutions in the full space case for several ranges of parameters, most interestingly for our purposes for $\alpha, \beta \geq 4$. Z.J. Zhang and X. Yang \cite{ZY16} tried to improve this to $\alpha, \beta > 3$, but apparently made a mistake in their proof (\cite[Remark after Equation (9), p. 2]{ZWY18}). Z.J. Zhang, C.P. Wu, Z.A. Yao \cite{ZWY18} then improved the range to $\alpha \in [3, \frac{27}{8}], \beta \geq 4$. The present paper, in a way, deals with the ``critical" case $\alpha = \beta = 3$, see the discussion of the results below. Furthermore, E.S. Titi and S. Trabelsi \cite{TT18} proved global well-posedness for an MHD model with nonlinear damping only in the velocity field. They thus avoid the magnetic pressure problem outlined in Section \ref{DTMHD_sssec_MPP}, as opposed to the above papers which seem to have overlooked this issue.

\subsection{The Tamed Equations}\label{intro_ssec_tamed}
We first motivate the tamed equations from a physical point of view by pointing out situations where similar models arise naturally in applications. The tamed Navier-Stokes equations are in a sense a variant of the Navier-Stokes equations with damping in the critical case $\beta = 3$, combined with a cutoff.
\subsubsection{Physical Motivation - Flows Through Porous Media}\label{DTMHD_sssec_physical_motivation}
Since the tamed equations are closely related to the damped equations of Section \ref{DTMHD_sssec_BFD}, which are much more well-studied, we focus on the occurence of these in the physics literature.
 
A system with possibly nonlinear damping is considered as a model for the flow of a fluid through porous media. To make this clear, consider the following 1-D compressible Euler equations with damping:
\begin{equation}
	\begin{split}
		\rho_{t} + \partial_{x} (\rho v) &= 0, \\
		(\rho v)_{t} + \partial_{x} \left( \rho v^{2} + p \right) &= - \alpha \rho v.
	\end{split}
\end{equation}
The interpretation that this equation models the flow through porous media is in line with the result that as $t \rightarrow \infty$, the density $\rho$ converges to the solution of the porous medium equation (cf. e.g. F.M. Huang, R.H. Pan \cite{HP03}). The momentum, on the other hand, is described in the limit by \emph{Darcy's law}:
	\begin{align*}
		\nabla p = - \frac{\mu}{k} \bm{v},
	\end{align*}
which represents a simple linear relationship between the flow rate and the pressure drop in a porous medium. Here, $k$ is the permeability of the porous medium and $\mu$ is the dynamic viscosity. The velocity $\bm{v}$ is called Darcy's seepage velocity.

In the interface region between a porous medium and a fluid layer, C.T. Hsu and P. Cheng \cite[Equation (31), p. 1591]{HC90} proposed the following equation\footnote{For ease of presentation, we have omitted various physical constants in the formulation of the equations.}:
\begin{align*}
	\divv \bm{v} &= 0, \\
	\partial_{t} \bm{v} + \divv (\bm{v} \otimes \bm{v}) &= - \nabla p + \nu \Delta \bm{v} - \alpha_{0} \bm{v} - \alpha_{1} |\bm{v}| \bm{v}, 
\end{align*}
where $\bm{v}$ is the so-called volume-averaged Darcy seepage velocity and $p$ is the volume-averaged pressure. This equation is motivated by a quadratic correction of P. Forchheimer to Darcy's law, called \emph{Forchheimer's law} or \emph{Darcy-Forchheimer law} (cf. for example P.A. Markowich, E.S. Titi, S. Trabelsi \cite{MTT16}):
\begin{align*}
	\nabla p = - \frac{\mu}{k} \bm{v}_{F} - \gamma \rho_{F} |\bm{v}_{F}| \bm{v}_{F},
\end{align*}
with the Forchheimer coefficient $\gamma > 0$, the Forchheimer velocity $\bm{v}_{F}$ as well as the density $\rho_{F}$. Furthermore, this correction becomes necessary at higher flow rates through porous media, see below for a more detailed discussion.

The question arises whether there are cases where a nonlinear correction of yet higher degree is necessary, i.e., where the flow obeys a \emph{cubic Forchheimer's law}:
\begin{equation}\label{DTMD_eq_cubicForchheimer}
	\nabla p = - \frac{\mu}{k} \bm{v} - \gamma \rho |\bm{v}| \bm{v} - \kappa \rho^{2} |\bm{v}|^{2} \bm{v}.
\end{equation}
Indeed, this seems to be the case. P. Forchheimer \cite{Forchheimer01} himself suggested several corrections to Darcy's law at higher flow velocities, one of them being the cubic law \eqref{DTMD_eq_cubicForchheimer}. M. Firdaouss, J.-L. Guermond and P. Le Qu\'{e}r\'{e} \cite{FGL97} revisited several historic data sets, amongst them the ones used by Darcy and Forchheimer (who did not correct for Reynolds numbers) and found that the data are actually better described by a linear and cubic Darcy-Forchheimer law (i.e., where $\gamma = 0$), in the regime of low to moderate Reynolds numbers, which, as they note \cite[p. 333]{FGL97}, includes most practical cases: 
\begin{equation}\label{DTMD_eq_cubiconlyForchheimer}
	\nabla p = - \frac{\mu}{k} \bm{v} - \kappa \rho^{2} |\bm{v}|^{2} \bm{v}.
\end{equation}
At higher Reynolds numbers, the correct behaviour seems to be quadratic, i.e., Forchheimer's law, in accordance with numerical simulations, e.g. in the work of M. Fourar \emph{et al.} \cite{FRLM04}. The point at which this behaviour changes seems to be dimension-dependent: it occurs much earlier in the numerical simulations of \cite[Figure 7]{FRLM04} in the 3D case than in the 2D case. Another instance where a cubic Forchheimer law is observed is the high-rate flow in a radial fracture with corrugated walls, cf. M. Bu\`{e}s, M. Panfilov, S. Crosnier and C. Oltean \cite[Equation (7.2), p. 54]{BPCO04}.

Taking into account all nonlinear corrections of Darcy's law, we arrive at the Brinkman-Forchheimer-extended Darcy (BFeD) model 
\begin{align*}
	\divv \bm{v} &= 0, \\
	\partial_{t} \bm{v} + \divv (\bm{v} \otimes \bm{v}) &= - \nabla p + \nu \Delta \bm{v} - \alpha_{0} \bm{v} - \alpha_{1} |\bm{v}| \bm{v} - \alpha_{2} |\bm{v}|^{2} \bm{v}.
\end{align*}
The tamed Navier-Stokes equations model the behaviour of the flow through porous media in the regime of relatively low to moderate Reynolds numbers, assuming that the higher-order behaviour is much more significant than the linear Darcy behaviour. For a more physically accurate model, one should also include the linear damping term, but we want to focus on the nonlinear effects here and thus for simplicity have omitted this term.
The fact that the onset of nonlinear behaviour occurs at higher flow rates is modelled by the cutoff function $g_{N}$ which is nonzero only for sufficiently high velocity.

\subsubsection{Review of Results for Tamed Navier-Stokes Equations}
The tamed Navier-Stokes equations were introduced in \cite{RZ09a} by M. R\"ockner and X.C. Zhang and have the following form:
\begin{equation}\label{intro_eq_TNSE}
	\begin{split}
		&\frac{\partial \bm{v} }{\partial t} = \nu \Delta \bm{v} - (\bm{v} \cdot \nabla) \bm{v} - g_{N}(|\bm{v}|^2)\bm{v} - \nabla p + \bm{f} \\
		&\nabla \cdot {\bm{v}} = 0 \\
		&\bm{v}(0,x) = \bm{v}_{0}(x).
	\end{split}
\end{equation}
The ``taming function" $g_{N}$ (defined below) allowed them to obtain stronger estimates than for the untamed Navier-Stokes equations, and hence regularity results that are out of reach for the Navier-Stokes equations. Furthermore, they could show that bounded solutions to the Navier-Stokes equations, if they exist, coincide with the solutions to the tamed Navier-Stokes equations, as shown in \cite{RZ09a}. This is a feature that most regularisations of the Navier-Stokes equations do not share. 

The deterministic case was further studied by X.C. Zhang on uniform $C^{2}$-domains in \cite{ZhangX09}. 
In a series of subsequent papers, various properties of the stochastic version of the equations were studied: existence and uniqueness to the stochastic equation as well as ergodicity in \cite{RZ09a}, Freidlin-Wentzell type large deviations in \cite{RZZ10} as well as the case of existence, uniqueness and small time large deviation principles for the Dirichlet problem in bounded domains \cite{RZ12} (both with T.S. Zhang). More recently, there has been resparked interest in the subject, with contributions by Z. Dong and R.R. Zhang \cite{DZ18} (existence and uniqueness for multiplicative L\'{e}vy noise) as well as Z. Brze\'{z}niak and G. Dhariwal \cite{BD19} (existence, uniqueness and existence of invariant measures in the full space $\R^{3}$ for a slightly simplified system and by different methods).

The taming function was subsequently simplified by changing the expression of $g_{N}$ as well as replacing the argument of the function $g_{N}$ by the square of the spatial $L^{\infty}$ norm of the velocity, i.e., $g_{N} \left( \| \bm{v} \|_{\mbL^{\infty}}^{2} \right)$, see W. Liu and M. R\"ockner \cite[pp. 170 ff]{LR15}. This leads to simpler assumptions on $g_{N}$ as well as easier proofs, especially when spatial derivatives are concerned (which then act only on the remaining factor $\bm{v}$). However, this only seems to work within the framework of locally monotone operators, which cannot be applied here due to the crucial assumption of compact embeddings. Thus we do not use this simplification in this work.

\subsubsection{The Magnetic Pressure Problem}\label{DTMHD_sssec_MPP}
From the form of the MHD equations, it would seem like there should also be a ``pressure" term $\nabla \pi$ in the equation for the magnetic field. That this is not the case is due to the structure of the nonlinear term in the equation, as was noted already in the work of M. Sermange and R. Temam \cite[p. 644]{ST83}. To make this precise, note that 
\begin{equation}\label{DTMHD_eq_curlcurl}
	- (\bm{v} \cdot \nabla) \bm{B} + (\bm{B} \cdot \nabla) \bm{v} = \nabla \times (\bm{v} \times \bm{B}),
\end{equation}
i.e., the nonlinear terms in the magnetic field equation combine to an expression that is manifestly divergence-free. If there existed a magnetic pressure $\pi$ such that
\begin{align*}
	\partial_{t} \bm{B} = \Delta \bm{B} + \nabla \times (\bm{v} \times \bm{B}) - \nabla \pi,
\end{align*}
taking the divergence of this equation, observing that $\divv \bm{B} = 0$, would give
\begin{align*}
	\Delta \pi = 0,
\end{align*}
where $\nabla \pi (t,x) \in L_{\text{loc}}^{2}(\R_{+}; L^{2}(\R^{3}))$, which implies $\nabla \pi = 0$. 
Thus, a careful balancing in the two nonlinear terms leads to the ``magnetic pressure" being zero. Now, if we introduce further nonlinearities into the equation for the magnetic field, we might offset this cancellation and thus we will get an artificial ``magnetic pressure" in our tamed equations. We can show that this pressure converges to zero as $N \rightarrow \infty$, but for the tamed equations, it is undeniably present. We will informally name this phenomenon the \emph{magnetic pressure problem}:
\begin{definition}[Magnetic Pressure Problem]
	Introducing extra terms $\mathcal{N}(\bm{v},\bm{B})$ that are not divergence-free into the equation for the magnetic field $\bm{B}$ in the MHD equations will lead to the appearance of an artificial, possibly unphysical ``magnetic pressure" $\pi$, i.e., it will be of the form
	\begin{align*}
		\partial_{t} \bm{B} = \Delta \bm{B} - (\bm{v} \cdot \nabla)\bm{B} - (\bm{B} \cdot \nabla) \bm{v} - \nabla \pi + \mathcal{N}(\bm{v},\bm{B}).
	\end{align*}
\end{definition}
This term, being of gradient type, does not manifest itself in the weak formulation of the problem, which is most often studied. Our system is no exception here, so when talking about the pointwise form of the equation, we have to include the magnetic pressure term $\pi$, as above. This fact is easily overlooked when introducing regularising terms into the equation for the magnetic field. To give an example, in the work of Z.J. Zhang, C.P. Wu and Z.A. Yao \cite{ZWY18}, the authors introduce a damping term $|\bm{B}|^{\beta-1} \bm{B}$ into the magnetic field equation, but forgot to include a ``magnetic pressure" in the strong form of this equation. Note that in other regularisations of the MHD equations, such as the Leray-$\alpha$ model, this problem is avoided by only introducing terms that preserve the structure of the nonlinearities \eqref{DTMHD_eq_curlcurl}.

Ideally, one should thus introduce taming terms for the velocity field only. For mathematical reasons, however, at this point we have to content ourselves with taming terms in both components, for otherwise, in the crucial $H^{1}$-estimate \eqref{ar_eq_main_estimate} we could not cancel all four nonlinearities.

\subsubsection{The Magnetic Field: To Regularise or Not to Regularise?}
There seems to be no clear answer, even for schemes which do not introduce magnetic pressure, to the question of whether in the MHD equations the magnetic field should be regularised as well, or whether one should restrict oneself to only regularising the velocity field. A mathematical criticism formulated in J.S. Linshiz and E.S. Titi \cite[p. 3]{LT07} is that regularising the magnetic part as well might add an unnecessary amount of dissipativity to the system. However, for the mathematical reasons discussed in the previous section, we add a taming term to the magnetic field equation as well.

\subsubsection{The Tamed MHD Equations}\label{DTMHD_sssec_tamed}
We investigate the case of the deterministic version of tamed magnetohydrodynamics (TMHD) equations in this paper. They can be understood as a model of an electrically conducting fluid in a porous medium at low to moderate Reynolds numbers (cf. P.A. Markowich, E.S. Titi and S. Trabelsi \cite{MTT16}). Following the approach of M. R\"ockner and X.C. Zhang, we study the following equations:
\begin{equation}\label{intro_eq_TMHD}
	\begin{split}
		\frac{\partial \bm{v} }{\partial t} &=  \Delta \bm{v} - \left( \bm{v} \cdot \nabla \right) \bm{v}  +  \left( \bm{B} \cdot \nabla \right) \bm{B} - \nabla \left( p + \frac{| \bm{B} |^2}{2} \right) - g_{N}(| (\bm{v}, \bm{B}) |^{2}) \bm{v} + \bm{f}_{v} \\
		\frac{\partial \bm{B}}{\partial t}
		&= \Delta \bm{B} - \left( \bm{v} \cdot \nabla \right) \bm{B} + (\bm{B} \cdot \nabla) \bm{v} - \nabla \pi - g_{N}(| (\bm{v}, \bm{B}) |^{2}) \bm{B} + \bm{f}_{B}.
	\end{split}
\end{equation}
If we write $y := (\bm{v}, \bm{B})$, the equations differ from the ``untamed" MHD equations by the taming term 
$$
 - g_{N}( | y(t,x) |^{2})y(t,x),
$$
which is a direct generalisation of the term in \eqref{intro_eq_TNSE}. The norm is defined in equation \eqref{ar_eq_def_Snorm} below. 

The taming function $g_{N} \colon \mathbb{R}_{+} \rightarrow \mathbb{R}_{+}$ is defined by
\begin{equation}\label{DTMHD_eq_def_g_N}
	\begin{cases}
		g_{N}(r) := 0, \quad & r \in [0,N], \\
		g_{N}(r) := C_{\text{taming}} \left( r - N - \frac{1}{2} \right)	, \quad & r \geq N+1, \\
		0 \leq g_{N}'(r) \leq C_{1}, \quad & r \geq 0, \\
		| g_{N}^{(k)}(r)| \leq C_{k}, \quad & r \geq 0, k \in \mathbb{N}.
	\end{cases}
\end{equation}
Here, the constant $C_{\text{taming}}$ is defined by
$$
	C_{\text{taming}} := 2 \max \{ Re, Rm \} = 2.
$$
For the Navier-Stokes case, M. R\"ockner and X.C. Zhang in \cite{RZ09a} set $C = \frac{1}{\nu} \propto Re$, so the fact that $C_{\text{taming}} \propto Re$ is not surprising. The factor 2 arises from the fact that we need to tame more terms here. The dependency on $Rm$ seems natural as well.

The idea of the taming procedure remains very clear: try to counteract the nonlinear terms of which there are four in the case of the MHD equations. To pinpoint the exact place where the power of the taming function unfolds, see the discussion after Lemma \ref{ar_thm_estimates}.
\newpage
\subsection{Results and Structure of This Paper}\label{intro_ssec_results}
We follow the ideas of \cite{RZ09a}. However, the proof of the regularity of the solution requires an MHD adaptation of a result from E.B. Fabes, B.F. Jones and N.M. Rivi\`{e}re \cite{FJR72}, which the author could not find in the literature. See Appendix \ref{chap:FJR} for a discussion and a proof of this result.

Our main results can be summarised as follows:
\begin{theorem}[Global well-posedness]\label{DTMHD_thm_intro_GWP}
	Let $y_{0} = (\bm{v}_{0}, \bm{B}_{0}) \in \mathcal{H}^{1}$ and $f = (\bm{f}_{v}, \bm{f}_{B}) \in L_{\text{loc}}^{2}(\mathbb{R}_{+}; \mathcal{H}^{0})$. For any $N > 0$, there exists a unique weak solution $y$ to the TMHD equation in the sense of Definition \ref{exuniq_def_weak}, depending continuously on the initial data, such that
	\begin{enumerate}[label=(\roman*), ref=(\roman*)]
		\item\label{exuniq_itm_H0est} For all $t \geq 0$,
		\begin{equation}\label{exuniq_eq_soln_H0est}
			\| y(t) \|_{\mathcal{H}^{0}} \leq \| y_{0} \|_{\mathcal{H}^{0}} + \int_{0}^{t} \| f(s) \|_{\mathcal{H}^{0}} \diff s,
		\end{equation}
		and
		\begin{equation}\label{exuniq_eq_soln_H1dot_int_est}
            \begin{split}
			&\int_{0}^{t} \| \nabla y(s) \|_{\mathcal{H}^{0}} + \| \sqrt{g_{N}(|y(s)|^2)} |y(s)| \|_{L^{2}}^{2} \diff s \\
			&\leq \| y_{0} \|_{\mathcal{H}^{0}}^{2} + 2 \left[ \int_{0}^{t} \| f(s) \|_{\mathcal{H}^{0}} \diff s \right]^{2}.
			\end{split}
		\end{equation}
		\item\label{exuniq_itm_H1est} The solution satisfies $y \in C(\mathbb{R}_{+}; \mathcal{H}^{1}) \cap L_{\text{loc}}^{2}(\mathbb{R}_{+}; \mathcal{H}^{2})$, $\partial_{t} y \in L_{\text{loc}}^{2}(\mathbb{R}_{+}; \mathcal{H}^{0})$ and for all $t \geq 0$,
		\begin{equation}\label{exuniq_eq_soln_H1est}
			\begin{split}
			&\| y(t) \|_{\mathcal{H}^{1}}^{2} + \int_{0}^{t} \left( \| y(s) \|_{\mathcal{H}^{2}}^{2} + \| | y(s) | | \nabla y(s) | \|_{L^{2}}^{2} \right) \diff s \\
			& \leq C \left( \| y_{0} \|_{\mathcal{H}^{1}}^{2} + \int_{0}^{t} \| f(s) \|_{\mathcal{H}^{0}}^{2} \diff s \right)  \\
			&\quad + C(1 + N + t) \left( \| y_{0} \|_{\mathcal{H}^{0}}^{2} + \left[ \int_{0}^{t} \| f(s) \|_{\mathcal{H}^{0}} \diff s \right]^{2} \right).
			\end{split}
		\end{equation}
		\item\label{exuniq_itm_pointwiseeqn} There exist real-valued functions $p(t,x)$ and $\pi(t,x)$, satisfying the conditions $\nabla p \in L_{\text{loc}}^{2}(\mathbb{R}_{+}; L^{2}(\mathbb{R}^{3}; \mathbb{R}^{3}))$, $\nabla \pi \in L_{\text{loc}}^{2}(\mathbb{R}_{+}; L^{2}(\mathbb{R}^{3}; \mathbb{R}^{3}))$, such that for almost all $t \geq 0$, in $L^{2}(\mathbb{R}^{3}; \mathbb{R}^{6})$ we have
		\begin{align*}
		\frac{\partial \bm{v} }{\partial t} &=  \Delta \bm{v} - \left( \bm{v} \cdot \nabla \right) \bm{v}  +  \left( \bm{B} \cdot \nabla \right) \bm{B} - \nabla \left( p + \frac{| \bm{B} |^2}{2} \right) - g_{N}(| (\bm{v}, \bm{B}) |^{2}) \bm{v} + \bm{f}_{v}, \\
		\frac{\partial \bm{B}}{\partial t}
		&= \Delta \bm{B} - \left( \bm{v} \cdot \nabla \right) \bm{B} + (\bm{B} \cdot \nabla) \bm{v} - \nabla \pi - g_{N}(| (\bm{v}, \bm{B}) |^{2}) \bm{B} + \bm{f}_{B}.
		\end{align*}
	\end{enumerate}
\end{theorem}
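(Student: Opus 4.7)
The strategy I would pursue follows the Galerkin method of R\"ockner and Zhang \cite{RZ09a} for the tamed Navier-Stokes equation, adapted to the coupled MHD setting and with careful attention to the magnetic pressure problem of Section \ref{DTMHD_sssec_MPP}. First I would fix a Galerkin basis in $\mathcal{H}^{0}$ consisting of smooth divergence-free vector fields (e.g.\ eigenfunctions of a Stokes-type operator on $\mathbb{R}^{3}$), project the pair $y=(\bm{v},\bm{B})$ onto the $n$-dimensional subspace $\mathcal{H}_{n}$, apply the Leray projector $\mathbb{P}$ to \eqref{intro_eq_TMHD} so as to eliminate both pressures at the approximate level, and solve the resulting locally Lipschitz system of ODEs for the coefficients to produce an approximate solution $y^{n}$ on an a priori time interval.

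The heart of the argument consists of two a priori bounds uniform in $n$. Testing the projected equation with $y^{n}$ itself and using $\divv \bm{v}^{n}=\divv \bm{B}^{n}=0$ makes the four MHD nonlinearities cancel in pairs, while the taming contribution produces precisely the $\|\sqrt{g_{N}(|y^{n}|^{2})}|y^{n}|\|_{L^{2}}^{2}$ term of \eqref{exuniq_eq_soln_H1dot_int_est}; Cauchy-Schwarz and Gronwall then deliver \eqref{exuniq_eq_soln_H0est}. The delicate estimate is the $\mathcal{H}^{1}$ bound \eqref{exuniq_eq_soln_H1est}, obtained by testing with $-\Delta y^{n}$: after integration by parts the four resulting cubic nonlinear terms do not cancel individually, but can be bounded pointwise by multiples of $|y^{n}||\nabla y^{n}|^{2}$. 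The choice $C_{\text{taming}}=2\max\{Re,Rm\}$ in \eqref{DTMHD_eq_def_g_N} is engineered precisely so that the dissipation produced when $-\Delta y^{n}$ meets the taming term dominates this bad contribution on the set $\{|y^{n}|^{2}>N\}$, while the part on $\{|y^{n}|^{2}\leq N\}$ is controlled by $N\|\nabla y^{n}\|_{\mathcal{H}^{0}}^{2}$ and accounts for the $(1+N+t)$ factor. I expect this estimate to be the technical core of the proof and the step where the taming mechanism truly earns its name, presumably formalised in Lemma \ref{ar_thm_estimates}.

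With these uniform bounds one extracts a candidate limit $y$ in $L^{\infty}_{\text{loc}}(\mathbb{R}_{+};\mathcal{H}^{1}) \cap L^{2}_{\text{loc}}(\mathbb{R}_{+};\mathcal{H}^{2})$ and, by reading off $\partial_{t}y^{n}$ from the equation (using the $\mathcal{H}^{2}$ bound together with Sobolev embeddings to control the cubic taming nonlinearity in $L^{2}$), also $\partial_{t}y \in L^{2}_{\text{loc}}(\mathbb{R}_{+};\mathcal{H}^{0})$. Because the domain is all of $\mathbb{R}^{3}$, Aubin-Lions cannot be applied globally; instead I would combine the time-derivative bound with Rellich-Kondrachov compactness on balls $B_{R}$ and diagonalise in $R$ to obtain strong convergence in $L^{2}_{\text{loc}}(\mathbb{R}_{+};L^{2}_{\text{loc}}(\mathbb{R}^{3}))$, which is enough to pass to the limit in the bilinear MHD terms and in the taming nonlinearity. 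The continuity $y \in C(\mathbb{R}_{+};\mathcal{H}^{1})$ then follows from the standard Lions embedding. The pressure and magnetic pressure required in part \ref{exuniq_itm_pointwiseeqn} are recovered by subtracting the Leray-projected limiting equation from the full pointwise identity: in each line this exhibits an explicit $L^{2}_{\text{loc}}(\mathbb{R}_{+};L^{2})$ vector field as a gradient, and de Rham's theorem (or, equivalently, solving a Poisson equation with right-hand side the divergence of the non-solenoidal terms, including crucially $\divv(g_{N}(|y|^{2})\bm{B})\not\equiv 0$) produces $\nabla p$ and $\nabla \pi$ with the stated integrability. This step is exactly where the magnetic pressure problem of Section \ref{DTMHD_sssec_MPP} is resolved.

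Uniqueness and continuous dependence are obtained by setting $w=y^{(1)}-y^{(2)}$ for two solutions with data $y_{0}^{(i)}$ and identical $f$, applying $\mathbb{P}$, and differentiating $\|w\|_{\mathcal{H}^{0}}^{2}$. The bilinear MHD differences are controlled in the standard way via H\"older and Gagliardo-Nirenberg in terms of the $\mathcal{H}^{1}$-norms of the two solutions, which are locally bounded in time by item \ref{exuniq_itm_H1est}. The taming difference is handled by the decomposition $g_{N}(|y^{(1)}|^{2})y^{(1)} - g_{N}(|y^{(2)}|^{2})y^{(2)} = g_{N}(|y^{(1)}|^{2})w + \bigl(g_{N}(|y^{(1)}|^{2})-g_{N}(|y^{(2)}|^{2})\bigr)y^{(2)}$: the first piece gives a nonnegative contribution when tested against $w$, and the second is controlled using $0 \leq g_{N}'\leq C_{1}$ together with $\bigl|\,|y^{(1)}|^{2}-|y^{(2)}|^{2}\bigr| \leq |w|(|y^{(1)}|+|y^{(2)}|)$, yielding a bound in $\|w\|_{\mathcal{H}^{0}}^{2}$ with a locally integrable prefactor that closes via Gronwall and simultaneously delivers continuous dependence on the initial data. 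The main anticipated obstacles, in decreasing order of subtlety, are (i) calibrating $C_{\text{taming}}$ so that all four cubic MHD nonlinearities are absorbed in the $\mathcal{H}^{1}$ estimate, (ii) the local compactness argument on the unbounded domain $\mathbb{R}^{3}$, and (iii) the de Rham recovery of the magnetic pressure $\pi$, which is nontrivial precisely because the taming term breaks the divergence-free structure of the magnetic equation.
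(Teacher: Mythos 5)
Your overall strategy (Galerkin approximation, cancellation of the four MHD nonlinearities at the $\mathcal{H}^{0}$ level, absorption of the quartic terms by the taming dissipation at the $\mathcal{H}^{1}$ level, local compactness, de Rham recovery of $p$ and $\pi$, Gronwall for uniqueness) is the same as the paper's, and your treatment of the taming difference in the uniqueness step and of the magnetic pressure matches the actual argument. There is, however, one concrete gap in the way you set up the approximation. You propose a Galerkin basis that is orthonormal in $\mathcal{H}^{0}$ (``eigenfunctions of a Stokes-type operator on $\mathbb{R}^{3}$'') and then derive the crucial estimate \eqref{exuniq_eq_soln_H1est} by testing the approximate equation with $-\Delta y^{n}$. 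On the whole space the Stokes/Laplace operator has purely continuous spectrum, so no such eigenbasis exists, and for a generic $\mathcal{H}^{0}$-orthonormal basis of smooth solenoidal fields the function $-\Delta y^{n}$ (or $(I-\Delta)y^{n}$) does \emph{not} lie in the Galerkin subspace; hence it is not an admissible test function and the $\mathcal{H}^{1}$ estimate cannot be performed at the approximate level as you describe. The paper avoids this by running the entire Galerkin scheme in the $\mathcal{H}^{1}$ inner product: the basis $\{e_{k}\}\subset\mathcal{V}$ is orthonormal in $\mathcal{H}^{1}$ with span dense in $\mathcal{H}^{3}$, the ODE coefficients are $b_{n}(z)=(\langle\mathcal{A}(z\cdot e),e_{i}\rangle_{\mathcal{H}^{1}})_{i}$, and testing with $z_{n}$ itself then yields exactly $\langle\mathcal{A}(y_{n}),y_{n}\rangle_{\mathcal{H}^{1}}$, to which the key inequality \eqref{ar_eq_main_estimate} of Lemma \ref{ar_thm_estimates} applies. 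This is a repairable but genuine defect of your setup, and it also affects your compactness step: with the $\mathcal{H}^{1}$-based scheme one no longer has a clean uniform bound on $\partial_{t}y^{n}$ in a fixed negative-order space, which is presumably why the paper replaces your localised Aubin--Lions argument by Arzel\`a--Ascoli applied to the scalar functions $t\mapsto\langle y_{n}(t),e_{k}\rangle_{\mathcal{H}^{1}}$ followed by Friedrichs' inequality \eqref{DTMHD_exuniq_eq_Friedrichs} to upgrade weak convergence to strong convergence in $L^{2}$ on bounded sets; the bound $\partial_{t}y\in L^{2}_{\text{loc}}(\mathbb{R}_{+};\mathcal{H}^{0})$ is then obtained a posteriori for the limit from the pointwise equation, not used as an input to compactness.

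Two smaller remarks: the paper proves uniqueness in the class of Definition \ref{exuniq_def_weak} using only the $L^{\infty}_{\text{loc}}(L^{4})\cap L^{2}_{\text{loc}}(\mathbb{H}^{1})$ regularity (the Gronwall prefactor is $\esssup_{s}(1+\|y_{1}\|_{\mathcal{L}^{4}}^{8}+\|y_{2}\|_{\mathcal{L}^{4}}^{8})$), whereas your argument invokes the $\mathcal{H}^{1}$ bound of item \ref{exuniq_itm_H1est} and therefore only gives uniqueness in a smaller class; and in the $\mathcal{H}^{1}$ estimate the nonlinear terms are controlled by $\int|y|^{2}|\nabla y|^{2}$ (via Young against the $\mathcal{H}^{2}$ dissipation), not pointwise by $|y^{n}||\nabla y^{n}|^{2}$, though this is clearly what you intended.
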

In the case of smooth data, we can prove smoothness of the solutions to the TMHD equations:
\begin{theorem}[Regularity and Strong Solutions]\label{DTMHD_thm_reg}
Let $y_{0} \in \mathcal{H}^{\infty} := \bigcap_{m \in \mathbb{N}_{0}} \mathcal{H}^{m}$ and $\mathbb{R}_{+} \ni t \mapsto f(t) \in \mathcal{H}^{m}$ be smooth for any $m \in \mathbb{N}_{0}$. Then there exists a unique smooth velocity field 
\begin{align*}
	\bm{v}_{N} \in C^{\infty}(\mathbb{R}_{+} \times \mathbb{R}^{3} ; \mathbb{R}^{3}) \cap C(\mathbb{R}_{+}; \mathcal{H}^{2}),
\end{align*}
a unique smooth magnetic field
\begin{align*}
	\bm{B}_{N} \in C^{\infty}(\mathbb{R}_{+} \times \mathbb{R}^{3} ; \mathbb{R}^{3}) \cap C(\mathbb{R}_{+}; \mathcal{H}^{2}),
\end{align*}
and smooth pressure functions
\begin{align*}
	p_{N}, \pi_{N} \in C^{\infty}(\mathbb{R}_{+} \times \mathbb{R}^{3}; \mathbb{R}),
\end{align*}
which are defined up to a time-dependent constant. Furthermore, the quadruplet $(\bm{v}_{N}, \bm{B}_{N}, p_{N}, \pi_{N})$ solves the tamed MHD equations \eqref{intro_eq_TMHD}.

Moreover, we have the following estimates: for any $T, N > 0$
\begin{equation}\label{exuniq_eq_ssoln_H0est}
	\sup_{t \in [0,T]} \| y_{N}(t) \|_{\mathcal{H}^{0}}^{2} + \int_{0}^{T} \| \nabla y_{N} \|_{\mathcal{H}^{0}}^{2} \diff s \leq C \left( \| y_{0} \|_{\mathcal{H}^{0}}^{2} + \left[ \int_{0}^{T} \| f(s) \|_{\mathcal{H}^{0}} \diff s \right]^{2} \right),
\end{equation}
\begin{equation}\label{exuniq_eq_ssoln_H1est}
	\sup_{t \in [0,T]} \| y_{N}(t) \|_{\mathcal{H}^{1}}^{2} + \int_{0}^{T} \| y_{N}(s) \|_{\mathcal{H}^{2}}^{2} \diff s \leq C_{T,y_{0},f} \cdot (1+N),
\end{equation}
\begin{equation}\label{exuniq_eq_ssoln_H2est}
	\sup_{t \in [0,T]} \| y_{N}(t) \|_{\mathcal{H}^{2}}^{2} \leq C_{T,y_{0},f}' + C_{T,y_{0},f}\cdot(1+N^{2}).
\end{equation}
To be precise, the constant $C_{T,y_{0},f}$ depends on $\| y_{0} \|_{\mathcal{H}^{1}}$ and $\int_{0}^{T} \| f \|_{\mathcal{H}^{0}} \diff s$ and goes to zero as both these quantities tend towards zero. The constant $C_{T,y_{0},f}'$ depends on $T$, $\| y_{0} \|_{\mathcal{H}^{2}}$ and $\sup_{t \in [0,T]} \| f(t) \|_{\mathcal{H}^{0}}$ as well as $\int_{0}^{T} \| \partial_{s} f \|_{\mathcal{H}^{0}}^{2} \diff s$.
\end{theorem}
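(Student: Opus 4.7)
The plan is a bootstrap argument anchored to the weak $\mathcal{H}^1$-solution produced by Theorem~\ref{DTMHD_thm_intro_GWP}: iteratively establish a priori $\mathcal{H}^m$-estimates for every $m \geq 2$, promote these to space-time smoothness, and recover the two pressures from Poisson equations. Estimates \eqref{exuniq_eq_ssoln_H0est} and \eqref{exuniq_eq_ssoln_H1est} are already contained in Theorem~\ref{DTMHD_thm_intro_GWP}, so the new content is \eqref{exuniq_eq_ssoln_H2est}, the $C^\infty$-regularity of $y_N=(\bm{v}_N,\bm{B}_N)$, and the construction of smooth $p_N, \pi_N$. Uniqueness follows from uniqueness at the weak level, so the task is purely to construct a smooth solution, for instance through a Galerkin or mollification scheme in which the a priori estimates below are uniform in the approximation parameter and survive the limit.

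\textbf{The $\mathcal{H}^2$-estimate.} I would test the velocity equation against $\Delta^{2}\bm{v}$ and the magnetic equation against $\Delta^{2}\bm{B}$, integrate by parts, and add. The gradient (pressure) terms vanish by $\divv \bm{v}=\divv\bm{B}=0$. The four cross nonlinearities recombine exactly as in the $\mathcal{H}^1$-estimate underlying Theorem~\ref{DTMHD_thm_intro_GWP} -- this symmetric cancellation was precisely the motivation for taming both fields, see the discussion after Lemma~\ref{ar_thm_estimates} -- leaving only remainders controllable by Gagliardo--Nirenberg interpolation between $\|y\|_{\mathcal{H}^1}$ and $\|y\|_{\mathcal{H}^2}$. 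The taming contribution, after moving $\Delta$, produces a positive coercive term $\int g_N(|y|^2)|\Delta y|^2 \, dx$ together with commutator remainders handled via the bounds $0 \leq g_N'\leq C_1$ from \eqref{DTMHD_eq_def_g_N} and $g_N(|y|^2) \leq C_{\text{taming}} |y|^2$; the worst such remainder carries a factor of order $N^2$, which matches the constant $C_{T,y_0,f}\cdot(1+N^2)$ in \eqref{exuniq_eq_ssoln_H2est}. Grönwall combined with the previously established $\mathcal{H}^1$-bound \eqref{exuniq_eq_ssoln_H1est} closes the estimate.

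\textbf{Higher regularity, time smoothness, and pressures.} For $\mathcal{H}^m$ with $m \geq 3$, I would differentiate the system $m-1$ times, test against $(-\Delta)^{m-1} y$, and control nonlinearities by Moser-type product estimates, invoking the uniform bounds $|g_N^{(k)}|\leq C_k$ from \eqref{DTMHD_eq_def_g_N} to handle all derivatives of the taming function. Sobolev embedding then yields $C^\infty$-regularity in $x$, and time smoothness is read off the equation itself: $\partial_t y$ inherits the spatial regularity of the right-hand side, and differentiating in $t$ propagates this to arbitrary $\partial_t^k y$, giving joint smoothness. The pressures are recovered on $\mathbb{R}^3$ by taking divergences of \eqref{intro_eq_TMHD}: for $p_N$ one solves
\[
-\Delta\Bigl(p_N + \tfrac{|\bm{B}_N|^2}{2}\Bigr) = \divv\bigl[(\bm{v}_N \cdot \nabla)\bm{v}_N - (\bm{B}_N \cdot \nabla)\bm{B}_N + g_N(|y_N|^2)\bm{v}_N - \bm{f}_v\bigr]
\]
via convolution with the Newton kernel (equivalently, Riesz transforms). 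For $\pi_N$ the procedure is cleaner: by \eqref{DTMHD_eq_curlcurl} the magnetic cross terms are divergence-free, so $-\Delta \pi_N = \divv[g_N(|y_N|^2)\bm{B}_N - \bm{f}_B]$, and the smoothness of $p_N,\pi_N$ follows from that of $y_N$.

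\textbf{Main obstacles.} The hard part is the $\mathcal{H}^2$-estimate: tracking the jump from linear in $N$ (at the $\mathcal{H}^1$ level) to quadratic in $N$, while showing that the taming term still absorbs the newly generated cubic-in-$\nabla y$ nonlinearities that a naive Gagliardo--Nirenberg bound cannot swallow. A separate technical hurdle is passing between $L^2_tL^2_x$-type right-hand sides and continuous $\mathcal{H}^m$-valued trajectories with the necessary control on $\nabla p_N$ and $\nabla \pi_N$ in $L^2_{\text{loc}}(\mathbb{R}_+;L^2)$ -- this is precisely what the MHD analogue of Fabes--Jones--Rivière in Appendix~\ref{chap:FJR} is designed for, and its invocation is the crux of justifying the pointwise form stated in \eqref{intro_eq_TMHD}.
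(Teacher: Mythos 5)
Your proposal diverges from the paper at the central new step, the $\mathcal{H}^{2}$-estimate \eqref{exuniq_eq_ssoln_H2est}, and as sketched that step has a genuine gap. Testing against $\Delta^{2}\bm{v}$, $\Delta^{2}\bm{B}$, the symmetric cancellation of the four nonlinearities survives only for the top-order parts $\langle(\bm{v}\cdot\nabla)\Delta\bm{v},\Delta\bm{v}\rangle$, $\langle(\bm{B}\cdot\nabla)\Delta\bm{B},\Delta\bm{v}\rangle+\langle(\bm{B}\cdot\nabla)\Delta\bm{v},\Delta\bm{B}\rangle$, etc.; the surviving commutators are of the schematic form $\int|\nabla y|\,|\nabla^{2}y|^{2}\,\diff x$, cubic with \emph{two} factors of second derivatives. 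The coercivity you gain from the taming term at this level is $\int g_{N}(|y|^{2})|\Delta y|^{2}\diff x\gtrsim \int|y|^{2}|\Delta y|^{2}\diff x-(N+1)\|\Delta y\|_{L^{2}}^{2}$, whose weight is $|y|^{2}$, not $|\nabla y|$, so it cannot absorb these commutators. Bounding them by Gagliardo--Nirenberg and absorbing the $\mathcal{H}^{3}$-part into the dissipation $\|\nabla\Delta y\|_{L^{2}}^{2}$ leaves a Gr\"onwall factor of order $\int_{0}^{T}\|y\|_{\mathcal{H}^{1}}\|y\|_{\mathcal{H}^{2}}\,\diff t\lesssim C_{T}(1+N)$, so this route yields at best $\sup_{t}\|y(t)\|_{\mathcal{H}^{2}}^{2}\lesssim e^{C_{T}(1+N)}$ --- not the polynomial bound $C'_{T,y_{0},f}+C_{T,y_{0},f}(1+N^{2})$ asserted in the theorem. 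You correctly flag this as ``the hard part'', but you do not supply the mechanism that overcomes it, and with the tools you name it cannot be overcome.

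The paper sidesteps the difficulty by never testing against $\Delta^{2}y$: it first bounds $\int_{0}^{T}\|\partial_{t}y\|_{\mathcal{H}^{0}}^{2}\diff t$ by testing against $\partial_{t}y$ (there the nonlinearities contribute only $\||y|\,|\nabla y|\|_{L^{2}}\|\partial_{t}y\|_{L^{2}}$, and $\||y|\,|\nabla y|\|_{L^{2}}^{2}$ is already time-integrable from the $\mathcal{H}^{1}$-estimate), then bounds $\sup_{t}\|\partial_{t}y(t)\|_{\mathcal{H}^{0}}^{2}$ by differentiating the equation in time and testing against $\partial_{t}y$, and finally reads off $\|y(t)\|_{\mathcal{H}^{2}}^{2}$ from \eqref{ar_eq_main_estimate} combined with the $\partial_{t}y$-bound; every step is linear in the quantity being estimated, which is exactly what produces the $(1+N)^{2}$ dependence. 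Note also that the paper does not prove smoothness by Moser iteration in $\mathcal{H}^{m}$: the Fabes--Jones--Rivi\`{e}re machinery of Appendix \ref{chap:FJR} is the main engine there (a bootstrap first in the integrability exponent $10\cdot(5/3)^{k-1}$, then in the number of derivatives), with the three energy estimates performed only afterwards; your proposal relegates the appendix to justifying the pointwise form, which both understates its role and leaves your higher-order energy scheme carrying the same unaddressed commutator issues at every level $m\geq 2$.
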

Finally, we have the following convergence result for vanishing taming terms, i.e., in the limit $N \rightarrow \infty$.

\begin{theorem}[Convergence to the untamed equations]\label{DTMHD_thm_conv}
	Let $y_{0} \in \mathcal{H}^{0}$, $f \in L^{2}([0,T];\mathcal{H}^{0})$, $y_{0}^{N} \in \mathcal{H}^{1}$ such that $\mathcal{H}^{0}-\lim_{N \rightarrow \infty} y_{0}^{N} = y_{0}$. Denote by $(y_N, p_{N}, \pi_{N})$ the unique solutions to the tamed equations \eqref{intro_eq_TMHD} with initial value $y_{0}^{N}$ given by Theorem \ref{DTMHD_thm_intro_GWP}. 
	
	Then there is a subsequence $(N_{k})_{k \in \mathbb{N}}$ such that $y_{N_{k}}$ converges to a $y$ in $L^{2}([0,T];\mathcal{L}_{\text{loc}}^{2})$ and $p_{N_{k}}$ converges weakly to some $p$ in $L^{9/8}([0,T];L^{9/5}(\mathbb{R}^{3}))$. The magnetic pressure $\pi_{N_{k}}$ converges to zero, weakly in $L^{9/8}([0,T];L^{9/5}(\mathbb{R}^{3}))$. Furthermore, $(y,p)$ is a weak solution to \eqref{intro_eq_DMHD} such that the following generalised energy inequality holds:
	\begin{equation*}
		\begin{split}
		2 \int_{0}^{T} \int_{\mathbb{R}^{3}} | \nabla y |^{2} \phi \diff x \diff s &\leq \int_{0}^{T} \int_{\mathbb{R}^{3}} \Big[ |y|^{2} \left( \partial_{t} \phi + \Delta \phi \right) + 2 \langle y, f \rangle \phi \\
		&\quad + (|y|^{2} - 2p)\langle \bm{v}, \nabla \phi \rangle - 2 \langle \bm{B}, \bm{v} \rangle \langle \bm{B}, \nabla \phi \rangle \Big] \diff x \diff s.
		\end{split}
	\end{equation*}
\end{theorem}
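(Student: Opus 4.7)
The strategy is the classical compactness approach: use the $N$-uniform a priori bounds of Theorem \ref{DTMHD_thm_intro_GWP} to extract weakly convergent subsequences, upgrade to strong convergence in energy-subcritical norms via Aubin-Lions, and then pass to the limit both in the weak formulation and in a local energy identity. The first step is to read off from part \ref{exuniq_itm_H0est} of Theorem \ref{DTMHD_thm_intro_GWP} that $(y_N)_{N}$ is uniformly bounded in $L^{\infty}([0,T]; \mathcal{H}^{0}) \cap L^{2}([0,T]; \mathcal{H}^{1})$ and that $g_{N}(|y_N|^{2})^{1/2}|y_N|$ is uniformly bounded in $L^{2}([0,T] \times \mathbb{R}^{3})$. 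Ladyzhenskaya-type interpolation then yields uniform $L^{10/3}_{t,x}$ and $L^{3}([0,T]; L^{18/5}(\mathbb{R}^{3}))$ bounds for $y_N$, which feed the Calder\'on-Zygmund pressure estimates below.

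Next I would reconstruct the two pressures from the equations. Applying $\mathrm{div}$ to the velocity equation gives
\begin{equation*}
    \Delta p_N = -\partial_{i}\partial_{j}(v_N^{i} v_N^{j} - B_N^{i} B_N^{j}) - \tfrac{1}{2}\Delta|\bm{B}_N|^{2} - \mathrm{div}(g_N(|y_N|^{2})\bm{v}_N) + \mathrm{div}\,\bm{f}_v,
\end{equation*}
and analogously $\Delta \pi_N = -\mathrm{div}(g_N(|y_N|^{2})\bm{B}_N)$. Calder\'on-Zygmund bounds on the Riesz transforms convert the product estimates into uniform bounds for $p_N$ and $\pi_N$ in $L^{9/8}([0,T]; L^{9/5}(\mathbb{R}^{3}))$; the choice of this mixed Lebesgue space is what accommodates both the quadratic $y \otimes y$-contribution and the taming contribution simultaneously. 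The key observation for $\pi_N$ is that on the support of $g_N$ one has $|y_N|^{2} \geq N$, so that
\begin{equation*}
    \int_{0}^{T}\!\!\int_{\mathbb{R}^{3}} g_{N}(|y_N|^{2})\, dx\, dt \leq \frac{1}{N}\int_{0}^{T}\!\!\int_{\mathbb{R}^{3}} g_{N}(|y_N|^{2}) |y_N|^{2}\, dx\, dt = O(1/N),
\end{equation*}
which forces $\pi_N \to 0$ in the relevant norm rather than merely being bounded.

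With the uniform estimates in place I would extract a bound for $\partial_t y_N$ in a dual space $L^{q}([0,T]; W^{-1,r}(\mathbb{R}^{3}))$ directly from the PDE, and apply the Aubin-Lions-Simon lemma on each ball $B_R$ to produce a subsequence $y_{N_k}$ converging strongly in $L^{2}([0,T]; \mathcal{L}^{2}_{\mathrm{loc}})$ and pointwise a.e. Weak convergence in $L^{2}([0,T]; \mathcal{H}^{1})$ is automatic, and weak convergence of $p_{N_k}, \pi_{N_k}$ in $L^{9/8}L^{9/5}$ follows from the pressure bounds. The nonlinear advective terms pass to the limit via the strong convergence combined with the uniform $L^{2}\mathcal{H}^{1}$ bound, while the taming contribution vanishes through
\begin{equation*}
    \left| \int_{0}^{T}\!\!\int g_{N}(|y_N|^{2}) y_N \cdot \bm{\varphi}\, dx\, dt \right|^{2} \leq \left( \int_{0}^{T}\!\!\int g_{N}|y_N|^{2}\, dx\, dt \right) \left( \int_{0}^{T}\!\!\int g_{N}|\bm{\varphi}|^{2}\, dx\, dt \right) = O(1/N),
\end{equation*}
for any test vector field $\bm{\varphi} \in C_{c}^{\infty}$, using the previous display. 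This identifies $(y, p)$ as a distributional weak solution of \eqref{intro_eq_DMHD} and confirms $\pi \equiv 0$ in the limit.

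The last ingredient is the generalised local energy inequality. Testing the velocity equation against $\bm{v}_N \phi$ and the magnetic equation against $\bm{B}_N \phi$ for a non-negative $\phi \in C_c^{\infty}((0,T) \times \mathbb{R}^{3})$, summing the resulting identities, and performing the standard integrations by parts yields a local energy identity for $y_N$ in which the taming term contributes $2\iint g_N(|y_N|^{2})|y_N|^{2}\phi \geq 0$ on the left; dropping this non-negative quantity produces an inequality. Passing $k \to \infty$ then uses strong $\mathcal{L}^{2}_{\mathrm{loc}}$ convergence of $y_{N_k}$ for the cubic terms of the form $|y|^{2}\langle \bm{v}, \nabla\phi\rangle$ and $\langle \bm{B}, \bm{v}\rangle\langle \bm{B}, \nabla\phi\rangle$, weak convergence of $p_{N_k}$ against $\langle \bm{v}, \nabla\phi\rangle$ for the pressure term, vanishing of $\pi_{N_k}$ to eliminate any magnetic-pressure contribution, and weak lower semicontinuity of $\|\nabla y_{N_k}\|_{L^{2}}$ to recover $2\int |\nabla y|^{2}\phi$ on the left. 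I expect the principal obstacle to be the coupled pressure analysis: one must simultaneously secure a uniform $L^{9/8}L^{9/5}$ bound for $p_N$ that tolerates the taming contribution \emph{and} quantitatively kill $\pi_N$ via the $O(1/N)$ control of $g_N$, since this is exactly the step where the MHD structure diverges nontrivially from the Navier-Stokes case treated in \cite{RZ09a}.
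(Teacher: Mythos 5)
Your proposal is correct in substance and follows the paper's overall architecture (uniform bounds from Theorem \ref{DTMHD_thm_intro_GWP}, local strong compactness, pressure reconstruction by taking the divergence, term-by-term passage to the limit in the localised energy identity with the non-negative taming term dropped), but it deviates from the paper at two points worth comparing. First, for compactness you bound $\partial_t y_N$ in a negative Sobolev space and invoke Aubin--Lions--Simon on balls, whereas the paper reruns its Arzel\`a--Ascoli argument on the scalar functions $t \mapsto \langle y_N(t), e_k\rangle$ and then upgrades to strong local $L^2$ convergence via Friedrichs' inequality \eqref{DTMHD_exuniq_eq_Friedrichs}; your route is more standard and shorter, the paper's buys convergence that is uniform in $t$, which it already has machinery for from the existence proof. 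Second, for the magnetic pressure the paper only proves a uniform $L^{9/8}([0,T];L^{9/5})$ bound on $\pi_N$ via the pointwise inequality $(g_N(r))^{9/8} r^{9/16} \leq C\, g_N(r)\, r$, extracts a weak limit $\pi$, and then identifies $\pi \equiv 0$ because the limit is harmonic and integrable; you instead aim for quantitative decay from $\|g_N(|y_N|^2)\|_{L^1(S_T)} = O(1/N)$. Your idea is sound and in fact gives more (strong, not just weak, convergence of $\pi_N$ to zero), but as written there is a small gap: $L^1$-smallness of $g_N$ alone does not control $\|g_N(|y_N|^2)\bm{B}_N\|_{L^{9/8}}$; you need to interpolate against the $O(1)$ bound on $\iint g_N(|y_N|^2)|y_N|^2$, e.g.\ via $(g_N(r))^{9/8} r^{9/16} \leq C N^{-5/16} g_N(r) r$ on $\{r \geq N\}$, which is a one-line refinement of the paper's inequality. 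Two further fillable imprecisions: passing to the limit in the cubic terms $|y_N|^2\langle \bm{v}_N,\nabla\phi\rangle$ requires not just strong $\mathcal{L}^2_{\mathrm{loc}}$ convergence but also the uniform $L^{10/3}(S_T)$ (or $L^{12}_tL^{9/4}_x$) bound to get uniform integrability on $\supp\phi$ (the paper uses generalised dominated convergence for exactly this); and the bilinear term $\iint p_{N_k}\langle\bm{v}_{N_k},\nabla\phi\rangle$ needs the cross term $\iint p_{N_k}\langle\bm{v}_{N_k}-\bm{v},\nabla\phi\rangle$ handled by strong convergence of $\bm{v}_{N_k}$ in the exponent dual to $L^{9/8}_tL^{9/5}_x$, not only by weak convergence of $p_{N_k}$ against a fixed function. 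All the ingredients for these repairs are already present in your outline.
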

\begin{remark}
	Compared to the Navier-Stokes case, we get a different type of term in the inequality, namely the last one on the right-hand side of the inequality.	
	Note also that the "magnetic pressure" disappears as $N_{k} \rightarrow \infty$.
\end{remark}
We have been able to extend all the results of \cite{RZ09a} to the case of tamed MHD equations. This posed several technical obstacles: we had to extend the regularity result of \cite{FJR72} to the MHD case, which the author could not find in the literature. Moreover, we describe the magnetic pressure problem in regularised MHD equations. Furthermore, our work basically provides the critical case $\alpha = \beta = 3$ of the model considered in \cite{Ye16, ZY16, ZWY18}.

The paper is organised as follows: we start in Section \ref{sec:DTMHD_ar} by introducing the functional framework of the problem. Then we state and prove a number of elementary lemmas regarding estimates as well as (local) convergence results for the operators appearing in the tamed MHD equations. Existence and uniqueness of a weak solution is shown in Section \ref{sec:DTMHD_exuniq} via a Faedo-Galerkin approximation procedure. Employing the results of Appendix \ref{chap:FJR}, we then show in Section  \ref{sec:DTMHD_regul} that for smooth data the solution to the tamed MHD equations remains smooth. Finally, in Section \ref{sec:DTMHD_conv} we show that as $N \rightarrow \infty$, the solution to the tamed MHD equations converges to a suitable weak solution of the (untamed) MHD equations.

The results of this paper are part of the dissertation \cite{Schenke20} of the author.

\subsection{Notation}\label{intro_ssec_notation}
Let $G \subset \mathbb{R}^{3}$ be a domain and denote the divergence operator by $\divv$. We use the following notational hierarchy for $L^{p}$ and Sobolev spaces:  
\begin{enumerate}[label=(\arabic*), ref=(\arabic*)]
	\item For the spaces $L^{p}(G,\mathbb{R})$ of real-valued integrable (equivalence classes of) functions we use the notation $L^{p}(G)$ or $L^{p}$ if no confusion can arise. These are the spaces of the components $v^{i}$, $B^{i}$ of the velocity and magnetic field vector fields.
	\item We sometimes use the notation $\bm{L}^{p}(G) := L^{p}(G;\mathbb{R}^{3})$ to denote 3-D vector-valued integrable quantities, especially the velocity vector field and magnetic vector field $\bm{v}$ and $\bm{B}$.
	\item The divergence-free and $p$-integrable vector fields will be denoted by \verb=\mathbb= symbols, so $\mathbb{L}^{p}(G) := \bm{L}^{p}(G) \cap \divv^{-1} \{ 0 \}$. Its elements are still denoted by bold-faced symbols $\bm{v}$, $\bm{B}$ and they satisfy by definition $\divv \bm{v} = \nabla \cdot \bm{v} = 0$, $\divv \bm{B} = 0$.
	\item Finally, we denote the space of the combined velocity and magnetic vector fields by \verb=\mathcal= symbols, i.e., $\mathcal{L}^{p}(G) := \mathbb{L}^{p}(G) \times \mathbb{L}^{p}(G)$. It contains elements of the form $y = (	\bm{v} , \bm{B})$, with both $\bm{v}$ and $\bm{B}$ divergence-free.
\end{enumerate}
For Sobolev spaces, we use the same notational conventions, so for example $\mathbb{H}^{k}(G) := \bm{H}^{k}(G) \cap \divv^{-1} \{ 0 \} := W^{k,2}(G ; \mathbb{R}^{3}) \cap \divv^{-1} \{ 0 \} $ etc. Finally, if the domain of the functions is not in $\mathbb{R}^{3}$, in particular if it is a real interval (for the time variable), then we use the unchanged $L^{p}$ notation.

For brevity, we use the following terminology when discussing the terms on the right-hand side of the tamed MHD equations: the terms involving the Laplace operator are called the \emph{linear terms}, the terms involving the taming function $g_{N}$ are called \emph{taming terms} and the other terms are called the \emph{nonlinear terms}. Furthermore, we refer to the initial data $y_{0} = (\bm{v}_{0} , \bm{B}_{0} )$ and the force $f = ( \bm{f}_{v} , \bm{f}_{B})$ collectively as the \emph{data} of the problem.

\section{The Tamed MHD Equations on the Whole Space}\label{DTMHD_sec_Cauchy}

\subsection{Auxiliary Results}\label{sec:DTMHD_ar}
We define the following spaces:
\begin{align*}
	\bm{W}^{m,p} := \overline{C_{0}^{\infty}(\R^{3};\R^{3})}^{\| \cdot \|_{m,p}},
\end{align*}
the closure being with respect to the norm
\begin{align*}
	\| \bm{u} \|_{m,p} := \left( \int_{\R^{3}} | (I - \Delta)^{m/2} \bm{u} |^{p} \diff x \right)^{1/p}.
\end{align*}
This norm is equivalent to the Sobolev norm given by
\begin{align*}
	\| \bm{u} \|_{W^{m,p}} := \sum_{j=0}^{m} \| \nabla^{j} \bm{u} \|_{L^{p}}
\end{align*}
where $\nabla^{j} \bm{u}$ denotes the j-th total weak derivative of $\bm{u}$ of order j. We define the solenoidal spaces by
\begin{equation}
	\mathbb{H}^{m} := \{ \bm{u} \in \bm{W}^{m,2} ~|~ \nabla \cdot \bm{u} = 0 \},
\end{equation}
where the divergence is taken in the sense of Schwartz distributions.

To handle the velocity and the magnetic field of the MHD equations at the same time, we will need to define a norm on the space $\mathcal{H}^{m} := \mathbb{H}^{m} \times \mathbb{H}^{m}$. We will define the scalar products in the usual way (see \cite{ST83}, p. 7): for the vector field $y = (\bm{v}, \bm{B})$ define 
\begin{equation} \label{ar_eq_def_Snorm}
	\langle y_{1} (x), y_{2}(x) \rangle := \left\langle 
	\begin{pmatrix}
		\bm{v}_{1} \\ \bm{B}_{1}
	\end{pmatrix} (x),
	\begin{pmatrix}
		\bm{v}_{2} \\ \bm{B}_{2}
	\end{pmatrix} (x)
	\right\rangle
	:= \langle \bm{v}_{1}(x) , \bm{v}_{2}(x) \rangle + \langle \bm{B}_{1}(x), \bm{B}_{2}(x) \rangle
\end{equation}
and similarly, for $y \in \mbH^{m} \times \mbH^{m}$, we set
\begin{equation} 
	( y_1, y_{2} )_{\mathcal{H}^{m}} := (\bm{v}_{1}, \bm{v}_{2})_{\mbH^{m}} + (\bm{B}_{1}, \bm{B}_{2})_{\mbH^{m}},
\end{equation}
and accordingly for the norms. They behave just like an $\ell^{2}$-type product norm.

In a similar fashion we define Lebesgue norms by
\begin{align*}
	\| y \|_{\mathcal{L}^{p}} := \left( \int_{\R^{d}} \left( | \bm{v} |^{2} +  | \bm{B} |^{2} \right)^{p/2} \diff x \right)^{1/p} = \| ~|y|~ \|_{L^{p}(\mathbb{R}^{3})}
\end{align*}
and
\begin{equation}\label{ar_eq_def_L_infty}
	\| y \|_{\mathcal{L}^{\infty}} := \esssup_{x \in \R^{3}} \left( | \bm{v}(x)|^{2} + | \bm{B}(x) |^{2} \right)^{1/2} = \esssup_{x \in \mathbb{R}^{3}} |y(x)|.
\end{equation}

We will often employ the following Gagliardo-Nirenberg-Sobolev-type interpolation inequality: Let $p,q,r \geq 1$ and $0 \leq j < m$. Assume the following three conditions:
$$
	m - j - \frac{3}{p} \notin \mathbb{N}_{0}, \quad \frac{1}{r} = \frac{j}{3} + \alpha \left( \frac{1}{p} - \frac{m}{3} \right) + \frac{1 - \alpha}{q}, \quad \frac{j}{m} \leq \alpha \leq 1.
$$
Then for any $\bm{u} \in W^{m,p} \cap L^{q}(\mathbb{R}^{3};\mathbb{R}^{3})$, we have the following estimate:
\begin{equation}\label{ar_eq_interpolation_3D}
	\| \nabla^{j} \bm{u} \|_{L^{r}} \leq C_{m,j,p,q,r} \| \bm{u} \|_{m,p}^{\alpha} \| \bm{u} \|_{L^{q}}^{1-\alpha}.
\end{equation}
Applying it to each component of the norm for $y = (\bm{v},\bm{B})$, the same estimate carries over to yield
\begin{equation}\label{ar_eq_interpolation_6D}
	\| \nabla^{j} y \|_{L^{r}} \leq C_{m,j,p,q,r} \| y \|_{m,p}^{\alpha} \| y \|_{L^{q}}^{1-\alpha}.
\end{equation}
Define the space of (solenoidal) test functions by
$$
	\mathcal{V} := \{ y = (\bm{v},\bm{B}) ~ \colon ~ \bm{v}, \bm{B} \in C_{0}^{\infty}(\R^{3}; \R^{3}), \nabla \cdot \bm{v} = 0, \nabla \cdot \bm{B} = 0 \} \subset C_{0}^{\infty}(\R^{3}; \R^{6}).
$$
We then have the following lemma.
\begin{lemma}\label{ar_thm_density}
	The space $\mathcal{V}$ is dense in $\mathcal{H}^{m}$ for any $m \in \mathbb{N}$.
\end{lemma}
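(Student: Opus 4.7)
Since $\mathcal{H}^m = \mathbb{H}^m \times \mathbb{H}^m$ carries the product Hilbert norm and $\mathcal{V}$ is precisely the product of two copies of the compactly supported smooth divergence-free vector fields on $\mathbb{R}^3$, it suffices to show that this one-component space is dense in $\mathbb{H}^m$ with respect to $\|\cdot\|_{m,2}$. Fix $\bm{u} \in \mathbb{H}^m$. The plan is a two-step approximation: first mollify to gain smoothness and preserve divergence-freeness, then truncate and correct the divergence to gain compact support.

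For the mollification step, let $(\rho_\epsilon)_{\epsilon>0}$ be a standard Friedrichs mollifier and set $\bm{u}_\epsilon := \rho_\epsilon * \bm{u}$. Convolution with a scalar kernel commutes with the distributional divergence, so $\divv \bm{u}_\epsilon = \rho_\epsilon * \divv \bm{u} = 0$, while $\bm{u}_\epsilon \in C^\infty(\mathbb{R}^3;\mathbb{R}^3) \cap \bm{W}^{k,2}$ for every $k \in \mathbb{N}_0$, and $\bm{u}_\epsilon \to \bm{u}$ in $\bm{W}^{m,2}$ as $\epsilon \to 0$.

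For the truncation step, fix $\chi \in C_0^\infty(\mathbb{R}^3)$ with $\chi \equiv 1$ on $B_1$ and $\chi \equiv 0$ outside $B_2$, and set $\chi_R(x) := \chi(x/R)$. Then $\chi_R \bm{u}_\epsilon \in C_0^\infty$ and $\chi_R \bm{u}_\epsilon \to \bm{u}_\epsilon$ in $\bm{W}^{m,2}$ by dominated convergence, but
\[
f_R := \divv(\chi_R \bm{u}_\epsilon) = \nabla \chi_R \cdot \bm{u}_\epsilon
\]
is in general nonzero. It is however smooth, supported in the annulus $A_R := B_{2R} \setminus \overline{B_R}$, and satisfies $\int_{\mathbb{R}^3} f_R\,dx = 0$ by the divergence theorem. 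I would invoke the Bogovskii operator on $A_R$ to produce $\bm{w}_R \in C_0^\infty(A_R;\mathbb{R}^3)$ with $\divv \bm{w}_R = f_R$, and set $\bm{u}_{R,\epsilon} := \chi_R \bm{u}_\epsilon - \bm{w}_R$, which is then a compactly supported smooth divergence-free vector field. Rescaling the Bogovskii estimates from the fixed annulus $A_1$ to $A_R$ yields, uniformly in $R$, the bound $\|\nabla^k \bm{w}_R\|_{L^2(A_R)} \leq C\,\|\nabla^{k-1} f_R\|_{L^2(A_R)}$ for $k \geq 1$, and combined with $|\nabla^j \chi_R| \leq C_j R^{-j}$ and $\bm{u}_\epsilon \in \bm{W}^{m,2}$ this gives $\|\bm{w}_R\|_{m,2} \to 0$ as $R \to \infty$. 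A diagonal extraction then produces a sequence in $\mathcal{V}$ converging to $(\bm{u},0) \in \mathcal{H}^m$; combining with the analogous construction for the second component gives the full density statement.

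The chief obstacle is the divergence correction: naive truncation breaks the constraint $\divv \bm{u} = 0$, and the most natural corrector, namely the Helmholtz projection $\nabla \Delta^{-1} f_R$ on the full space, fails to be compactly supported. Bogovskii's construction is the standard remedy, producing a compactly supported corrector inside any bounded Lipschitz domain on which the data has zero mean, at the cost of a careful scaling analysis on the annuli $A_R$. An alternative route would represent $\bm{u}_\epsilon = \mathrm{curl}\,\bm{A}_\epsilon$ via a vector potential and approximate $\bm{A}_\epsilon$ directly in $C_0^\infty$; this is more elegant but requires extra low-frequency decay of $\bm{A}_\epsilon$ that is not automatic for $\bm{u}$ only assumed to lie in $L^2(\mathbb{R}^3)$.
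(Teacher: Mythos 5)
Your argument is correct, but it is genuinely different from what the paper does: the paper offers no proof at all beyond the citation ``see \cite{RZ09a}, Lemma 2.1'', reducing the MHD statement to the known Navier--Stokes density result via the product structure $\mathcal{H}^m=\mathbb{H}^m\times\mathbb{H}^m$. You instead give a self-contained proof by the standard route: reduce to one component, mollify to preserve $\divv \bm{u}=0$ while gaining smoothness, truncate with $\chi_R$, and repair the divergence error $f_R=\nabla\chi_R\cdot\bm{u}_\epsilon$ with the Bogovskii operator on the annulus. All the ingredients check out: $f_R$ is smooth, has zero integral by the divergence theorem, and (after enlarging the annulus slightly so that $\supp\nabla\chi_R$ is compactly contained in it) the Bogovskii corrector $\bm{w}_R$ is an admissible $C_0^\infty$ divergence-free correction, and the scaling $A_R\to A_1$ does give $R$-uniform constants for the derivative estimates. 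What your approach buys is independence from the cited reference and an argument that generalises painlessly to $W^{m,p}$ and to exterior-type truncations; the cost is the Bogovskii machinery, which is heavier than the paper's one-line reduction.

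One detail deserves to be made explicit: your stated estimate $\|\nabla^k\bm{w}_R\|_{L^2}\le C\|\nabla^{k-1}f_R\|_{L^2}$ only covers $k\ge1$, and the $k=0$ contribution to $\|\bm{w}_R\|_{m,2}$ does not follow from it directly. The scaling computation gives $\|\bm{w}_R\|_{L^2(A_R)}\le C\,R\,\|f_R\|_{L^2(A_R)}$ (equivalently, Poincar\'e on $A_R$ with constant proportional to $R$, since $\bm{w}_R$ vanishes on $\partial A_R$); the dangerous factor $R$ is exactly cancelled by $|\nabla\chi_R|\le CR^{-1}$, leaving $\|\bm{w}_R\|_{L^2}\le C\|\bm{u}_\epsilon\|_{L^2(A_R)}\to0$. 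With that line added, the proof is complete.
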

\begin{proof}
	See \cite{RZ09a}, Lemma 2.1.
\end{proof}

Let $P \colon L^{2}(\R^{3};\R^{3}) \rightarrow \mbH^{0}$ be the Leray-Helmholtz projection. Then $P$ commutes with derivative operators (\cite[Lemma 2.9, p. 52]{RRS16}) and can be restricted to a bounded linear operator
\begin{align*}
	P \restr{H^{m}} \colon H^{m} \rightarrow \mbH^{m}.
\end{align*} 
Furthermore, consider the tensorised projection
\begin{align*}
	\mathcal{P} := P \otimes P, \quad \mcP y := (P \otimes P) \begin{pmatrix} \bm{v} \\ \bm{B}	\end{pmatrix} = \begin{pmatrix} P \bm{v} \\ P \bm{B} \end{pmatrix}.
\end{align*}
Then $\mcP \colon \mcL^{2} \rightarrow \mcH^{0}$ is a bounded linear operator.
We define the following operator for the terms on the right-hand side of the TMHD equations, projected on the space of divergence free functions:
\begin{align*}
	\mcA(y) := \mcP \D y - \mcP \begin{pmatrix}
		(\bm{v} \cdot \nabla) \bm{v}  - (\bm{B} \cdot \nabla) \bm{B} \\ (\bm{v} \cdot \nabla) \bm{B} - (\bm{B} \cdot \nabla) \bm{v} 
	\end{pmatrix} - \mcP \left( g_{N}(|y|^{2})y \right).
\end{align*}
For $y := (\bm{v}, \bm{B})$ and a test function $\tilde{y} := (\tilde{\bm{v}}, \tilde{\bm{B}}) \in \mcH^{1}$, consider (using the self-adjointness of the projection $\mcP$)
\begin{equation}
	\begin{split}
		\langle \mcA(y), \tilde{y} \rangle_{\mcH^{0}} &= \langle \bm{v}, \Delta \tilde{\bm{v}} \rangle_{\bm{L}^{2}} + \langle \bm{B}, \Delta \tilde{\bm{B}} \rangle_{\bm{L}^{2}} - \langle (\bm{v} \cdot \nabla) \bm{v}, \tilde{\bm{v}} \rangle_{\bm{L}^{2}} + \langle (\bm{B} \cdot \nabla )\bm{B}, \tilde{\bm{v}} \rangle_{\bm{L}^{2}} \\
		& - \langle (\bm{v} \cdot \nabla) \bm{B}, \tilde{\bm{B}} \rangle_{\bm{L}^{2}} + \langle (\bm{B} \cdot \nabla) \bm{v}, \tilde{\bm{B}} \rangle_{\bm{L}^{2}} - g_{N}(|y|^{2}) \langle y , \tilde{y} \rangle.
	\end{split}
	\end{equation}
and for $\tilde{y} \in \mathcal{H}^{3}$
\begin{equation}\label{ar_eq_testing_H1}
	\begin{split}
		\langle \mcA(y), \tilde{y} \rangle_{\mcH^{1}} &= \langle \mcA(y), (I - \Delta )\tilde{y} \rangle_{0} \\
		&= - \langle \nabla \bm{v}, (I - \Delta)\nabla \tilde{\bm{v}} \rangle_{L^{2}} - \langle \nabla \bm{B}, \nabla (I - \Delta) \tilde{\bm{B}} \rangle_{L^{2}} \\
		&- \langle (\bm{v} \cdot \nabla) \bm{v}, (I - \Delta)\tilde{\bm{v}} \rangle_{L^{2}} + \langle (\bm{B} \cdot \nabla )\bm{B}, (I - \Delta)\tilde{\bm{v}} \rangle_{L^{2}} \\
		& - \langle (\bm{v} \cdot \nabla) \bm{B}, (I - \Delta)\tilde{\bm{B}} \rangle_{L^{2}} + \langle (\bm{B} \cdot \nabla) \bm{v}, (I - \Delta)\tilde{\bm{B}} \rangle_{L^{2}} \\
		& - \langle g_{N}(|y|^{2})y, (I-\Delta)\tilde{y} \rangle_{\mathcal{L}^{2}}.
	\end{split}
\end{equation}
Let us give names to the linear, nonlinear and taming terms of \eqref{ar_eq_testing_H1}:
\begin{align*}
	\mcA_{1}(y, \tilde{y}) &:= - \langle \nabla \bm{v}, (I - \Delta)\nabla \tilde{\bm{v}} \rangle_{L^{2}} - \langle \nabla \bm{B}, \nabla (I - \Delta) \tilde{\bm{B}} \rangle_{L^{2}}, \\
	\mcA_{2}(y,\tilde{y}) &:= -\langle (\bm{v} \cdot \nabla) \bm{v}, (I - \Delta)\tilde{\bm{v}} \rangle_{L^{2}} + \langle (\bm{B} \cdot \nabla )\bm{B}, (I - \Delta)\tilde{\bm{v}} \rangle_{L^{2}} \\
		& \quad - \langle (\bm{v} \cdot \nabla) \bm{B}, (I - \Delta)\tilde{\bm{B}} \rangle_{L^{2}} + \langle (\bm{B} \cdot \nabla) \bm{v}, (I - \Delta)\tilde{\bm{B}} \rangle_{L^{2}}, \\
	\mcA_{3}(y, \tilde{y}) &:= - \langle g_{N}(|y|^{2})y, (I-\Delta)\tilde{y} \rangle_{\mathcal{L}^{2}}.
\end{align*}
The following lemma provides elementary estimates on the terms defined above.

\begin{lemma}~
\label{ar_thm_estimates}
\begin{enumerate}[label=(\roman*), ref=(\roman*)]
	\item\label{ar_itm_H3_bound} For any $y \in \mathcal{H}^{1}$ and $\tilde{y} \in \mathcal{V}$,
$$
	| \langle \mcA(y), \tilde{y} \rangle_{\mcH^{1}} | \leq C (1 + \| y \|_{\mathcal{H}^{1}}^{3}) \| \tilde{y} \|_{\mathcal{H}^{3}},
$$
i.e., $\langle \mcA(y), \cdot \rangle_{\mcH^{1}}$ can be considered as an element in the dual space $(\mathcal{H}^{3})'$ with its norm bounded by $C(1 + \| y \|_{\mathcal{H}^{1}}^{3})$.
	\item If $y \in \mathcal{H}^{1}$, then 
	\begin{equation}\label{ar_eq_0eq}
		\langle \mcA(y), y \rangle_{\mcH^{0}} = - \| \nabla y \|_{\mathcal{H}^{0}}^{2} - \langle g_{N}( | y |^{2}) y , y \rangle_{\mathcal{L}^{2}}.
	\end{equation}
	\item If $y \in \mathcal{H}^{2}$, then
\begin{equation} \label{ar_eq_main_estimate}
	\begin{split}
		\langle \mcA(y), y \rangle_{\mcH^{1}} &\leq - \frac{1}{2} \| y \|_{\mathcal{H}^{2}}^{2} + \| y \|_{\mathcal{H}^{0}}^{2} + 2(N + 1) \| \nabla y \|_{\mathcal{H}^{0}}^{2}  \\
		&\quad - \| | \bm{v} | | \nabla \bm{v} | \|_{\bm{L}^{2}}^{2}  - \| | \bm{B} | | \nabla \bm{B} | \|_{\bm{L}^{2}}^{2} \\
		&\quad - \| | \bm{v}| | \nabla \bm{B} | \|_{\bm{L}^{2}}^{2} - \| | \bm{B} | | \nabla \bm{v} | \|_{\bm{L}^{2}}^{2}.
	\end{split}
\end{equation}
\end{enumerate}
\end{lemma}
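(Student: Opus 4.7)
The lemma bundles three estimates of rather different character, but they share a common toolkit: integration by parts, the divergence-free conditions on $\bm v$ and $\bm B$, Sobolev embedding, and the growth/positivity properties of $g_N$ collected in \eqref{DTMHD_eq_def_g_N}. I would treat the three parts in order.

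\textbf{Part (i).} Starting from the splitting $\langle\mathcal{A}(y),\tilde y\rangle_{\mathcal{H}^1}=\mathcal{A}_1+\mathcal{A}_2+\mathcal{A}_3$, I would bound each piece separately against $\|\tilde y\|_{\mathcal{H}^3}$. The linear piece $\mathcal{A}_1$ falls to Cauchy–Schwarz. For $\mathcal{A}_2$, each convective product $(\bm a\cdot\nabla)\bm b$ is estimated in $\mathcal{L}^1$ by $\|y\|_{\mathcal{H}^1}^2$, then paired with $\|(I-\Delta)\tilde y\|_{\mathcal{L}^\infty}\lesssim\|\tilde y\|_{\mathcal{H}^3}$ via the Sobolev embedding $\mathcal{H}^3\hookrightarrow\mathcal{L}^\infty$. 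For $\mathcal{A}_3$, the growth $|g_N(r)|\leq C(1+r)$ combined with $\mathcal{H}^1\hookrightarrow\mathcal{L}^6$ gives $\|g_N(|y|^2)y\|_{\mathcal{L}^1}\lesssim\|y\|_{\mathcal{H}^0}+\|y\|_{\mathcal{L}^3}^3\lesssim 1+\|y\|_{\mathcal{H}^1}^3$. Summing yields the stated dual-space bound.

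\textbf{Part (ii)} is an exercise in cancellations. Integration by parts against the Laplacian immediately produces $-\|\nabla y\|_{\mathcal{H}^0}^2$. The four convective brackets
\[
-\langle(\bm v\cdot\nabla)\bm v,\bm v\rangle+\langle(\bm B\cdot\nabla)\bm B,\bm v\rangle-\langle(\bm v\cdot\nabla)\bm B,\bm B\rangle+\langle(\bm B\cdot\nabla)\bm v,\bm B\rangle
\]
all vanish: the first and third by $\divv\bm v=0$ and the classical identity $\int(\bm v\cdot\nabla)\bm u\cdot\bm u\,\diff x=0$; the second and fourth cancel one another after moving a derivative via $\divv\bm B=0$. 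Only the taming pairing remains, as claimed.

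\textbf{Part (iii)} is the main estimate. Using the splitting from \eqref{ar_eq_testing_H1}: two integrations by parts give $\mathcal{A}_1(y,y)=-\|\nabla y\|_{\mathcal{L}^2}^2-\|\Delta y\|_{\mathcal{L}^2}^2$, which I rewrite via $\|y\|_{\mathcal{H}^2}^2=\|y\|_{\mathcal{L}^2}^2+2\|\nabla y\|_{\mathcal{L}^2}^2+\|\Delta y\|_{\mathcal{L}^2}^2$ as $-\|y\|_{\mathcal{H}^2}^2+\|y\|_{\mathcal{H}^0}^2+\|\nabla y\|_{\mathcal{H}^0}^2$. In $\mathcal{A}_2(y,y)$ the $I$-parts reproduce the identity of part (ii) with $\tilde y=y$ and therefore vanish, leaving only four brackets of the form $\pm\langle(\bm a\cdot\nabla)\bm b,\Delta\bm c\rangle$, each estimated by Cauchy–Schwarz and Young as $\varepsilon\|\Delta y\|_{\mathcal{L}^2}^2+C_\varepsilon\||\bm a||\nabla\bm b|\|_{\mathcal{L}^2}^2$, with $\varepsilon$ chosen so small that the Laplace contribution is absorbed into $\mathcal{A}_1$ (paying the factor $\tfrac12$ in front of $\|y\|_{\mathcal{H}^2}^2$). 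For $\mathcal{A}_3(y,y)=-\langle g_N(|y|^2)y,(I-\Delta)y\rangle$, expanding $(I-\Delta)y$ and applying Leibniz under the integral yields three manifestly non-positive pieces:
\[
-\!\int g_N(|y|^2)|y|^2\,\diff x,\qquad -\!\int g_N(|y|^2)|\nabla y|^2\,\diff x,\qquad -\tfrac{1}{2}\!\int g_N'(|y|^2)|\nabla(|y|^2)|^2\,\diff x.
\]
I keep the middle one and discard the others.

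\textbf{Main obstacle.} The delicate step is the final bookkeeping: the coefficient produced in front of $\||y||\nabla y|\|_{\mathcal{L}^2}^2$ from summing the four terms of $\mathcal{A}_2$ must be dominated by $\int g_N(|y|^2)|\nabla y|^2\,\diff x$ with \emph{enough slack left over} to keep the full negative residual $-\||\bm v||\nabla\bm v|\|^2-\||\bm v||\nabla\bm B|\|^2-\||\bm B||\nabla\bm v|\|^2-\||\bm B||\nabla\bm B|\|^2$ on the right of \eqref{ar_eq_main_estimate}. The natural move is to split the spatial integral into $\{|y|^2>N+1\}$ and its complement: on the first set the definition $g_N(|y|^2)=2(|y|^2-N-\tfrac12)\geq|y|^2$ (this is exactly where $C_{\mathrm{taming}}=2$ is used) lets one peel off $|y|^2|\nabla y|^2$ plus the further negative residual, while on the second set the bound $|y|^2\leq N+1$ produces the $2(N+1)\|\nabla y\|_{\mathcal{H}^0}^2$ contribution. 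This balance is tight: it is precisely why the taming function is chosen to depend on the joint quantity $|y|^2=|\bm v|^2+|\bm B|^2$ (a componentwise taming would not simultaneously control the mixed products $\langle(\bm B\cdot\nabla)\bm B,\Delta\bm v\rangle$ and $\langle(\bm v\cdot\nabla)\bm B,\Delta\bm B\rangle$) and why the constant is doubled to $2\max\{Re,Rm\}$ — a smaller choice would leave $(C_\varepsilon+1-C_{\mathrm{taming}})\int_{|y|^2>N+1}|y|^2|\nabla y|^2$ with the wrong sign.
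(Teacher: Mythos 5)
Your decomposition into $\mcA_{1},\mcA_{2},\mcA_{3}$, your part (ii), and the overall strategy for part (iii) all match the paper, but two steps as written are incorrect. In part (i) you pair an $\mcL^{1}$ bound on the nonlinear and taming terms with $\|(I-\Delta)\tilde y\|_{\mcL^{\infty}}\lesssim\|\tilde y\|_{\mcH^{3}}$. This fails: for $\tilde y\in\mcH^{3}$ one only has $(I-\Delta)\tilde y\in\mcH^{1}$, and $H^{1}(\R^{3})\not\hookrightarrow L^{\infty}$, so your pairing would require $\|\tilde y\|_{\mcH^{s}}$ with $s>7/2$ and the claimed $(\mcH^{3})'$ bound is lost. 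The paper instead moves the derivative onto the test function and estimates $|\langle\bm{\varphi}\otimes\bm{\psi},\nabla(I-\Delta)\bm{\theta}\rangle|\le\|\bm{\varphi}\|_{\bm{L}^{4}}\|\bm{\psi}\|_{\bm{L}^{4}}\|\nabla(I-\Delta)\bm{\theta}\|_{\bm{L}^{2}}$ together with $\mbH^{1}\hookrightarrow\bm{L}^{4}$; an $L^{3/2}$--$L^{3}$ pairing would also work. (Relatedly, using $g_{N}(r)\le C(1+r)$ leaves a $\|y\|_{\mcL^{1}(\R^{3})}$ term that is not controlled by $\|y\|_{\mcH^{1}}$ on the whole space; use $g_{N}(r)\le Cr$, which holds because $g_{N}$ vanishes on $[0,N]$.)

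In part (iii), the set-splitting at the end rests on the claim that $g_{N}(r)=2(r-N-\tfrac12)\ge r$ on $\{r>N+1\}$; this is false for $N+1<r<2N+1$ (e.g.\ $g_{N}(N+2)=3$, far below $N+2$ for large $N$). The splitting is also unnecessary: the definition gives the global pointwise bound $g_{N}(r)\ge 2(r-N-\tfrac12)$ for all $r\ge0$, whence $-\int g_{N}(|y|^{2})|\nabla y|^{2}\,\diff x\le-2\int|y|^{2}|\nabla y|^{2}\,\diff x+(2N+1)\|\nabla y\|_{\mcH^{0}}^{2}$ in one line, exactly as in the paper. For this to cancel the contribution of $\mcA_{2}$ and still leave the full negative residual with coefficient one, the Young step must be taken with the specific constants $ab\le a^{2}+\tfrac14 b^{2}$, so that the four nonlinear terms contribute $\tfrac12\|y\|_{\mcH^{2}}^{2}$ plus the sum of $\||\bm{\varphi}||\nabla\bm{\psi}|\|_{L^{2}}^{2}$ over $\bm{\varphi},\bm{\psi}\in\{\bm{v},\bm{B}\}$ with coefficient exactly $1$; your prescription ``choose $\varepsilon$ small'' goes in the wrong direction, since any $\varepsilon<\tfrac14$ inflates that coefficient beyond $1$ (and, below $\varepsilon=\tfrac18$, beyond the $2$ that the taming term can absorb), destroying the residual terms in \eqref{ar_eq_main_estimate}.
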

\begin{proof}
Throughout this proof, let $\bm{\varphi}, \bm{\psi}, \bm{\theta} \in \{ \bm{v}, \bm{B} \}$. 

To prove \ref{ar_itm_H3_bound}, using also the Sobolev embedding theorem we find
\begin{align*}
	\langle \bm{\varphi} \otimes \bm{\psi}, \nabla (I-\Delta) \bm{\theta} \rangle_{\bm{L}^{2}} \leq C \| \bm{\varphi} \|_{\bm{L}^{4}} \| \bm{\psi} \|_{\bm{L}^{4}} \| \bm{\theta} \|_{\mathbb{H}^{3}} \leq C \| \bm{\varphi} \|_{\mathbb{H}^{1}} \| \bm{\psi} \|_{\mathbb{H}^{1}} \| \bm{\theta} \|_{\mathbb{H}^{3}},
\end{align*}
which yields
$$
	\mcA_{2}(y, \tilde{y}) \leq C \| y \|_{\mathcal{H}^{1}}^{2} \| \tilde{y} \|_{\mathcal{H}^{3}}.
$$
Similarly one can show $\mcA_{1}(y,\tilde{y}) \leq C \| y \|_{\mathcal{H}^{1}} \| \tilde{y} \|_{\mathcal{H}^{3}},$ $\mcA_{3}(y,\tilde{y}) \leq  C \| y \|_{\mathcal{H}^{1}}^{3} \| \tilde{y} \|_{\mathcal{H}^{3}},$ which implies the assertion.

Equality \eqref{ar_eq_0eq} follows from the divergence-freeness.

To prove \eqref{ar_eq_main_estimate}, we start with the inequality
\begin{align*}
	\mcA_{1}(y,y) = -  \| \bm{v} \|_{\mathbb{H}^{2}}^{2} -  \| \bm{B} \|_{\mathbb{H}^{2}}^{2} +  \| \nabla \bm{v} \|_{\mathbb{H}^{0}}^{2} +  \| \nabla \bm{B} \|_{\mathbb{H}^{0}}^{2} + \| \bm{v} \|_{\mathbb{H}^{0}}^{2} +  \| \bm{B} \|_{\mathbb{H}^{0}}^{2}.
\end{align*}
The nonlinear terms can be estimated by Young's inequality:
\begin{align*}
	\mcA_{2}(y,y) &\leq \| (\bm{v} \cdot \nabla) \bm{v} \|_{\bm{L}^{2}}^{2} + \frac{1}{4} \| \bm{v} \|_{\mathbb{H}^{2}}^{2} + \| (\bm{B} \cdot \nabla) \bm{B} \|_{\bm{L}^{2}}^{2} + \frac{1}{4} \| \bm{v} \|_{\mathbb{H}^{2}}^{2} \\
		& \quad + \| (\bm{v} \cdot \nabla) \bm{B} \|_{\bm{L}^{2}}^{2} + \frac{1}{4} \| \bm{B} \|_{\mathbb{H}^{2}}^{2} + \| (\bm{B} \cdot \nabla) \bm{v} \|_{\bm{L}^{2}}^{2} + \frac{1}{4} \| \bm{B} \|_{\mathbb{H}^{2}}^{2}.
\end{align*}
The taming terms are estimated using $ g_{N}(r) \geq C(r - N)$:
\begin{align*}
	&\mcA_{3}(y,y) = - \langle g_{N}(|y|^{2})y,y \rangle_{\mathcal{L}^{2}} - \langle \nabla \left( g_{N}(|y|^{2})y \right), \nabla y \rangle_{\mathcal{L}^{2}} \\
	&\leq - \int_{\mathbb{R}^{3}} \sum_{i,k=1}^{3} \partial_{i} v^{k} \partial_{i} \left( g_{N}(|y|^{2}) v^{k} \right) -  \partial_{i} B^{k} \partial_{i} \left( g_{N}(|y|^{2}) B^{k} \right) \diff x \\
	&= - \int_{\mathbb{R}^{3}} g_{N}(|y|^{2}) | \nabla y |^{2} \diff x - \frac{1}{2} \int_{\mathbb{R}^{3}} g_{N}'(|y|^{2}) \left| \nabla | y |^{2} \right| \diff x \\
	&\leq - \int_{\mathbb{R}^{3}} g_{N}(|y|^{2}) | \nabla y |^{2} \diff x,
\end{align*}
Finally, since $g_{N}(|y|^{2}) \geq C_{\text{taming}} \left( |y|^{2} - (N + \frac{1}{2}) \right)$ by definition, we get that
\begin{align*}
	\mcA_{3}(y,y) &\leq - C_{\text{taming}} \int_{\mathbb{R}^{3}} | y |^{2} | \nabla y |^{2} \diff x + C_{\text{taming}} (N + \frac{1}{2}) \| \nabla y \|_{\mathcal{H}^{0}}^{2} \\
	&= -C_{\text{taming}} \big( \| | \bm{v} | | \nabla \bm{v} | \|_{L^{2}}^{2} + \| | \bm{v} | | \nabla \bm{B} | \|_{L^{2}}^{2} \\
	&\quad + \| | \bm{B} | | \nabla \bm{v} | \|_{L^{2}}^{2} + \| | \bm{B} | | \nabla \bm{B} | \|_{L^{2}}^{2} \big) + C_{\text{taming}} (N + \frac{1}{2}) \| \nabla y \|_{\mathcal{H}^{0}}^{2}.
\end{align*}
Since $C_{\text{taming}} = 2$, we get \eqref{ar_eq_main_estimate} by combining the above three estimates.
\end{proof}
\begin{lemma}\label{ar_thm_convergence} Let $y_{n}, \tilde{y} \in \mathcal{V}$ and $y \in \mathcal{H}^{1}$. Let $\Omega := \supp (\tilde{y})$ and assume that
\begin{align*}
	\sup_{n} \| y_{n} \|_{\mathcal{H}^{1}} < \infty, \quad \lim_{n \rightarrow \infty} \| (y_{n} - y) 1_{\Omega} \|_{\mathcal{L}^{2}} = 0.
\end{align*}
Then 
\begin{align*}
	\lim_{n \rightarrow \infty} \langle \mcA(y_{n}), \tfy \rangle_{\mcH^{1}} = \langle \mcA(y), \tfy \rangle_{\mcH^{1}}.
\end{align*}
\end{lemma}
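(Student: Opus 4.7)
The plan is to decompose $\langle \mcA(y_n), \tilde y\rangle_{\mcH^1}$ into the three pieces $\mcA_1(y_n,\tilde y) + \mcA_2(y_n,\tilde y) + \mcA_3(y_n,\tilde y)$ from \eqref{ar_eq_testing_H1} and treat each in turn. Since $\tilde y \in \mathcal V$ is smooth and compactly supported in $\Omega$, so is $(I-\Delta)\tilde y$, hence every relevant integral is taken over $\Omega$. The crucial compactness input will be the uniform bound $\sup_n \|y_n\|_{\mcL^6(\Omega)} \le M$ obtained from $\sup_n \|y_n\|_{\mcH^1} < \infty$ via the Sobolev embedding $\mcH^1 \hra \mcL^6$, together with the hypothesis $\|(y_n - y)\mathbf{1}_\Omega\|_{\mcL^2} \to 0$.

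For the linear term $\mcA_1$, I would integrate by parts to move all derivatives onto the test function (no boundary contributions, since $\tilde y$ is compactly supported), so that $\mcA_1(y_n,\tilde y) = \langle y_n, \Delta(I-\Delta)\tilde y\rangle_{\mcL^2(\Omega)}$. The test function is smooth and bounded on $\Omega$, so $\mcL^2(\Omega)$-convergence of $y_n$ to $y$ immediately passes the limit.

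For the nonlinear term $\mcA_2$, I would use $\divv \bm v_n = \divv \bm B_n = 0$ to integrate by parts and put each of the four summands into the form $\int_\Omega (\bm\varphi_n \otimes \bm\psi_n) : \nabla (I-\Delta)\bm\theta\, dx$ with $\bm\varphi_n, \bm\psi_n \in \{\bm v_n, \bm B_n\}$ and $\bm\theta \in \{\tilde{\bm v}, \tilde{\bm B}\}$. The telescoping identity $\bm\varphi_n \otimes \bm\psi_n - \bm\varphi \otimes \bm\psi = (\bm\varphi_n - \bm\varphi)\otimes \bm\psi_n + \bm\varphi \otimes (\bm\psi_n - \bm\psi)$, combined with Hölder's inequality for the exponent pair $(2,6)$, yields $\bm\varphi_n \otimes \bm\psi_n \to \bm\varphi \otimes \bm\psi$ in $L^{3/2}(\Omega)$; pairing against the bounded function $\nabla(I-\Delta)\bm\theta$ then gives the limit.

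The main obstacle is the cubic taming term $\mcA_3(y_n,\tilde y) = -\langle g_N(|y_n|^2) y_n, (I-\Delta)\tilde y\rangle_{\mcL^2}$, for which $\mcL^2(\Omega)$-convergence of $y_n$ is not by itself strong enough. My plan is to extract a subsequence (not relabelled) along which $y_n \to y$ almost everywhere on $\Omega$; continuity of $g_N$ then yields $g_N(|y_n|^2) y_n \to g_N(|y|^2) y$ a.e. From $g_N(0) = 0$ and $\|g_N'\|_\infty \leq C_1$ I obtain the growth bound $|g_N(|y_n|^2) y_n| \leq C_1 |y_n|^3$, so the uniform $\mcL^6(\Omega)$ bound yields a uniform $\mcL^2(\Omega)$ bound on $g_N(|y_n|^2) y_n$; on the bounded set $\Omega$ this forces equi-integrability, and Vitali's convergence theorem upgrades the a.e. convergence to $L^1(\Omega)$-convergence, which suffices since $(I-\Delta)\tilde y$ is bounded. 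A standard subsequence-of-subsequences argument applied to the scalar sequence $\langle \mcA(y_n),\tilde y\rangle_{\mcH^1}$ finally promotes this to convergence of the original sequence.
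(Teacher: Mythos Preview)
Your proof is correct and follows the natural route that the paper (via its reference to \cite[Lemma~2.3]{RZ09a}) has in mind: decompose $\langle \mcA(y_n),\tilde y\rangle_{\mcH^1}$ into $\mcA_1+\mcA_2+\mcA_3$, shift derivatives onto the compactly supported test function, and exploit local $\mcL^2$-convergence together with the uniform $\mcH^1$-bound. One stylistic remark: for $\mcA_3$ you can avoid the Vitali/subsequence machinery by writing $g_N(|y_n|^2)y_n-g_N(|y|^2)y = g_N(|y_n|^2)(y_n-y)+[g_N(|y_n|^2)-g_N(|y|^2)]y$, using $|g_N(r)-g_N(s)|\le C_1|r-s|$, and applying H\"older directly on $\Omega$ (interpolating $\mcL^2$-convergence and the uniform $\mcL^6$-bound to get $\mcL^p(\Omega)$-convergence for $2\le p<6$); this gives convergence of the full sequence without passing through a.e.\ limits. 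Both arguments are equivalent in strength here.
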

\begin{proof}
The proof follows in the same way as in \cite[Lemma 2.3, p. 533 f.]{RZ09a}.
\end{proof}
\subsection{Existence and Uniqueness of Weak Solutions}\label{sec:DTMHD_exuniq}
In this section we will study the well-posedness of the weak formulation of the TMHD equations. We start by stating our notion of weak solution. We proceed to show uniqueness first and then existence of weak solutions via a Faedo-Galerkin approximation scheme.
\begin{definition}[Weak solution]\label{exuniq_def_weak}
	Let $y_{0} \in \mathcal{H}^{0}$, $f \in L^{2}_{\text{loc}}(\R_{+}; \mathcal{H}^{0})$. Let $y = \begin{pmatrix}
	\bm{v} \\ \bm{B}	
\end{pmatrix}$ where $\bm{v}$ and $\bm{B}$ are measurable vector fields, $\bm{v}, \bm{B} \colon \mathbb{R}_{+} \times \mathbb{R}^{3} \rightarrow \mathbb{R}^{3}$. We call $y$ a \emph{weak solution} of the tamed MHD equations if
\begin{enumerate}[label=(\roman*), ref=(\roman*)]
	\item $\bm{v}, \bm{B} \in L^{\infty}_{\text{loc}}(\mathbb{R}_{+} ; L^{4}(\mathbb{R}^{3}; \mathbb{R}^{3})) \cap L^{2}_{\text{loc}}(\mathbb{R}_{+} ; \mathbb{H}^{1})$.
	\item For all $\tilde{y} \in \mathcal{V}$ and $t \geq 0$,
	\begin{equation} \label{exuniq_eq_defweak}
		\begin{split}
		\langle y(t), \tilde{y} \rangle_{\mathcal{H}^{0}} 
		&= \langle y_{0}, \tilde{y} \rangle_{\mathcal{H}^{0}} - \int_{0}^{t}  \langle \nabla y, \nabla \tilde{y} \rangle_{\mathcal{H}^{0}} \diff s  - \int_{0}^{t} \langle (\bm{v} \cdot \nabla) \bm{v}, \tilde{\bm{v}} \rangle_{\bm{L}^{2}} \diff s + \int_{0}^{t} \langle (\bm{B} \cdot \nabla) \bm{B}, \tilde{\bm{v}} \rangle_{\bm{L}^{2}} \diff s \\
		&\quad - \int_{0}^{t} \langle (\bm{v} \cdot \nabla) \bm{B}, \tilde{\bm{B}} \rangle_{\bm{L}^{2}} \diff s + \int_{0}^{t} \langle (\bm{B} \cdot \nabla) \bm{v}, \tilde{\bm{B}} \rangle_{\bm{L}^{2}} \diff s - \int \langle g_{N}(|y|^{2})y, \tilde{y} \rangle_{L^{2}} \diff s + \int_{0}^{t} \langle f, \tilde{y} \rangle_{\mathcal{H}^{0}} \diff s.
		\end{split}
	\end{equation}
	\item $\lim_{t \downarrow 0} \| y(t) - y_{0} \|_{\mathcal{L}^{2}} = 0$.
\end{enumerate}
\end{definition}
This definition deals with purely spatial test functions, but it can be extended to the case of test functions that depend also on time, as the next proposition demonstrates.
\begin{proposition}\label{exuniq_thm_spacetimetest}
	Let $y = \begin{pmatrix} \bm{v} \\ \bm{B} \end{pmatrix}$ be a weak solution and let $T > 0$. Then for all $\tilde{y} \in C^{1}([0,T]; \mathcal{H}^{1})$ such that $\tilde{y}(T) = 0$, we have
	\begin{equation} \label{exuniq_eq_tdepweak}
		\begin{split}
		\int_{0}^{T} \langle y , \partial_{t} \tilde{y} \rangle_{\mathcal{H}^{0}} \diff t &= - \langle y_{0}, \tilde{y}(0) \rangle_{\mathcal{H}^{0}} + \int_{0}^{t}  \langle \nabla y, \nabla \tilde{y} \rangle_{\mathcal{H}^{0}} \diff s \\
		&+ \int_{0}^{t} \langle (\bm{v} \cdot \nabla) \bm{v}, \tilde{\bm{v}} \rangle_{\bm{L}^{2}} \diff s - \int_{0}^{t} \langle (\bm{B} \cdot \nabla) \bm{B}, \tilde{\bm{v}} \rangle_{\bm{L}^{2}} \diff s \\
		&+ \int_{0}^{t} \langle (\bm{v} \cdot \nabla) \bm{B}, \tilde{\bm{B}} \rangle_{\bm{L}^{2}} \diff s - \int_{0}^{t} \langle (\bm{B} \cdot \nabla) \bm{v}, \tilde{\bm{B}} \rangle_{\bm{L}^{2}} \diff s \\
		&+ \int_{0}^{t} \langle g_{N}(|y|^{2})y, \tilde{y} \rangle_{L^{2}} \diff s - \int_{0}^{t} \langle f, \tilde{y} \rangle_{\mathcal{H}^{0}} \diff s.
		\end{split}
	\end{equation}
Moreover, the following energy equality holds:
\begin{equation}\label{exuniq_eq_energyeq}
	\begin{split}
	\| y(t) \|_{\mathcal{H}^{0}}^{2} &+ 2 \int_{0}^{t} \| \nabla y \|_{\mathcal{H}^{0}}^{2} \diff s + 2 \int_{0}^{t} \| \sqrt{g_{N}(|y|^{2} )} |y| \|_{L^{2}}^{2} \diff s \\
	&= \| y_{0} \|_{\mathcal{H}^{0}}^{2} + 2 \int_{0}^{t} \langle f, y \rangle_{\mathcal{H}^{0}} \diff s, \quad \forall t \geq 0.
	\end{split}
\end{equation}
\end{proposition}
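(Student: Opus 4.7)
The plan is to first extend the weak formulation from spatial test functions to time-dependent ones, and then use that extension to derive the energy equality via time regularisation. For \eqref{exuniq_eq_tdepweak}, I would start from a separable ansatz: fix $\tilde{y}_0 \in \mathcal{V}$ and $\varphi \in C^1([0,T])$ with $\varphi(T) = 0$, and consider $\tilde{y}(s) := \tilde{y}_0\,\varphi(s)$. Definition~\ref{exuniq_def_weak} applied with the spatial test $\tilde{y}_0$ shows that the scalar function $F(s) := \langle y(s), \tilde{y}_0 \rangle_{\mathcal{H}^0}$ is absolutely continuous on $[0,T]$, with $L^1$-derivative equal to the integrand on the right-hand side of \eqref{exuniq_eq_defweak}. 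An integration by parts in time, using $\varphi(T) = 0$, then yields \eqref{exuniq_eq_tdepweak} for this $\tilde{y}$. Linearity extends the identity to finite sums $\sum_k \tilde{y}_k \varphi_k$, and the $\mathcal{H}^1$-density of $\mathcal{V}$ (Lemma~\ref{ar_thm_density}), together with a uniform time-partition of $\tilde{y}$, recovers an arbitrary $\tilde{y} \in C^1([0,T]; \mathcal{H}^1)$ with $\tilde{y}(T) = 0$. Continuity of each right-hand side term under this approximation follows from $\bm{v}, \bm{B} \in L^\infty_{\text{loc}}(\mathbb{R}_+; \bm{L}^4) \cap L^2_{\text{loc}}(\mathbb{R}_+; \mathbb{H}^1)$ combined with H\"older's inequality and the interpolation \eqref{ar_eq_interpolation_3D}.

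For the energy equality, testing directly against $y$ is forbidden by the lack of time regularity of $y$, so I would regularise $y$ in time by Steklov averages
\[
y_h(s) := \frac{1}{h}\int_s^{s+h} y(\tau)\,\diff\tau,
\]
which is Lipschitz in $s$ with values in $\mathbb{H}^1$ and whose time derivative $h^{-1}(y(s+h)-y(s))$ lies in $L^2_{\text{loc}}$. Applying part~(i) with the test function $y_h(\cdot)\,\chi_\delta(\cdot)$, where $\chi_\delta \in C^1([0,T])$ is a smooth approximation of $\mathbf{1}_{[0,t]}$, and then letting $h \downarrow 0$ and $\delta \downarrow 0$, the left-hand side $\int_0^T \langle y(s), \partial_s(y_h\chi_\delta)(s)\rangle_{\mathcal{H}^0}\,\diff s$ converges via the classical theory of Steklov averages (and the Lebesgue differentiation theorem) to $\tfrac{1}{2}\bigl(\|y(t)\|_{\mathcal{H}^0}^2 - \|y_0\|_{\mathcal{H}^0}^2\bigr)$.

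The main obstacle lies in passing to the limit on the right-hand side, which is where the assumed integrability of $y$ is essential. The convective terms are paired by H\"older as $\|\bm{v}\|_{\bm{L}^4}\|\nabla y\|_{\bm{L}^2}\|y_h\|_{\bm{L}^4}$ (and the three analogues), so the weak-$\ast$ convergence $y_h \rightharpoonup^\ast y$ in $L^\infty_{\text{loc}}(\mathbb{R}_+;\bm{L}^4)$ combined with strong convergence in $L^2_{\text{loc}}(\mathbb{R}_+;\mathbb{H}^1)$ suffices. For the taming term, the growth bound $g_N(r) \leq C(1+r)$ places $g_N(|y|^2)y$ in $L^\infty_{\text{loc}}(\mathbb{R}_+;\bm{L}^{4/3})$, which pairs with $y_h \in L^\infty_{\text{loc}}(\mathbb{R}_+;\bm{L}^4)$. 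Once the limits are in place, the MHD cancellations
\[
\langle (\bm{v}\cdot\nabla)\bm{v},\bm{v}\rangle = 0 = \langle (\bm{v}\cdot\nabla)\bm{B},\bm{B}\rangle, \qquad \langle (\bm{B}\cdot\nabla)\bm{B},\bm{v}\rangle + \langle (\bm{B}\cdot\nabla)\bm{v},\bm{B}\rangle = 0,
\]
valid since $\divv \bm{v} = \divv \bm{B} = 0$, annihilate the four convective contributions. Identifying $\langle g_N(|y|^2)y,y\rangle_{\mathcal{L}^2}$ with $\|\sqrt{g_N(|y|^2)}\,|y|\|_{L^2}^2$ and multiplying by $2$ then produces \eqref{exuniq_eq_energyeq}.
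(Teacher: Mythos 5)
Your proposal is correct and follows essentially the same route as the paper, which simply delegates the two steps to references: \cite[Proposition 3.3]{RZ09a} for the extension to time-dependent test functions (via products $\tilde{y}_{0}\varphi(s)$ with $\tilde{y}_{0}\in\mathcal{V}$, integration by parts in time, and density), and \cite[Lemma 2.7]{BDPRS07} for the energy equality via time-regularisation of the solution. The one point you gloss over --- that the quadratic remainder $\tfrac{1}{h}\int_{0}^{T}\| y(s+h)-y(s)\|_{\mathcal{H}^{0}}^{2}\chi_{\delta}(s)\,\diff s$ vanishes as $h\downarrow 0$, which is what the Steklov limit really requires and which follows from $y\in L^{2}_{\text{loc}}(\mathbb{R}_{+};\mathcal{H}^{1})$ together with $\partial_{t}y\in L^{2}_{\text{loc}}(\mathbb{R}_{+};(\mathcal{H}^{1})')$ rather than from the Lebesgue differentiation theorem alone --- is precisely the content of the cited lemma, so this is a matter of attribution rather than a gap.
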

\begin{proof}
To prove that the right-hand side of \eqref{exuniq_eq_tdepweak} is well-defined, we proceed as in \cite[Proposition 3.3, pp. 534 f.]{RZ09a}. The energy equality then follows from approximating the solution accordingly, cf. \cite[Lemma 2.7, p. 635]{BDPRS07}.
\end{proof}
\begin{theorem}[Uniqueness of weak solutions]\label{exuniq_thm_uniq} Let $y_{1}, y_{2}$ be two weak solutions in the sense of Definition \ref{exuniq_def_weak}. Then we have $y_{1} = y_{2}$.
\end{theorem}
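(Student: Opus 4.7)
The plan is to run the standard energy argument on the difference $w := y_1 - y_2 = (\bm{u}, \bm{C})$, where $\bm{u} := \bm{v}_1 - \bm{v}_2$ and $\bm{C} := \bm{B}_1 - \bm{B}_2$, and close it via Gronwall. Subtracting the two weak formulations \eqref{exuniq_eq_defweak} gives a weak identity for $w$ in which both artificial pressures disappear against divergence-free test functions. I would first upgrade this to an $\mathcal{H}^{0}$-energy equality for $w$ in the spirit of Proposition \ref{exuniq_thm_spacetimetest}; since $w \in L^{\infty}_{\mathrm{loc}}(\mathbb{R}_{+};\mathcal{L}^{4}) \cap L^{2}_{\mathrm{loc}}(\mathbb{R}_{+};\mathcal{H}^{1})$ with $\partial_{t}w \in L^{1}_{\mathrm{loc}}((\mathcal{H}^{1})')$, one may apply the Lions--Magenes/mollification trick (exactly as quoted from \cite{BDPRS07} after Proposition \ref{exuniq_thm_spacetimetest}) to legitimately test with $w$ itself. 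This is the only delicate structural step; the remainder is algebraic identification plus estimation.

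The crucial cancellations are as follows. Splitting each bilinear term as, e.g., $(\bm{v}_{1}\cdot\nabla)\bm{v}_{1} - (\bm{v}_{2}\cdot\nabla)\bm{v}_{2} = (\bm{u}\cdot\nabla)\bm{v}_{1} + (\bm{v}_{2}\cdot\nabla)\bm{u}$ and similarly for the other three nonlinearities, the self-terms $\langle(\bm{v}_{2}\cdot\nabla)\bm{u},\bm{u}\rangle$ and $\langle(\bm{v}_{2}\cdot\nabla)\bm{C},\bm{C}\rangle$ vanish by $\divv\bm{v}_{2}=0$, and the two cross-terms involving $\bm{B}_{2}$ cancel against each other after integration by parts with $\divv\bm{B}_{2}=0$. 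What survives on the right-hand side is the sum
\[
  -\langle(\bm{u}\cdot\nabla)\bm{v}_{1},\bm{u}\rangle
  +\langle(\bm{C}\cdot\nabla)\bm{B}_{1},\bm{u}\rangle
  -\langle(\bm{u}\cdot\nabla)\bm{B}_{1},\bm{C}\rangle
  +\langle(\bm{C}\cdot\nabla)\bm{v}_{1},\bm{C}\rangle.
\]
The contribution of the taming term is $\langle g_{N}(|y_{1}|^{2})y_{1} - g_{N}(|y_{2}|^{2})y_{2},w\rangle$, which I would show is nonnegative: writing $F(y) := g_{N}(|y|^{2})y$, the Jacobian $DF(y)[h] = 2g_{N}'(|y|^{2})\langle y,h\rangle y + g_{N}(|y|^{2})h$ satisfies $\langle DF(y)[h],h\rangle \geq 0$ since $g_{N},g_{N}'\geq 0$, so $F$ is monotone and the term may be discarded (or kept on the good side).

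It remains to estimate the four surviving nonlinearities. For each, I would integrate by parts to shift the derivative onto $w$ so that $y_{1}$ appears only in $L^{4}$; e.g.
\[
  |\langle(\bm{u}\cdot\nabla)\bm{v}_{1},\bm{u}\rangle|
  = |\langle(\bm{u}\cdot\nabla)\bm{u},\bm{v}_{1}\rangle|
  \leq \|\bm{u}\|_{\bm{L}^{4}}\|\nabla\bm{u}\|_{\bm{L}^{2}}\|\bm{v}_{1}\|_{\bm{L}^{4}},
\]
and using the 3D Gagliardo--Nirenberg inequality \eqref{ar_eq_interpolation_3D}, $\|\bm{u}\|_{\bm{L}^{4}} \leq C \|\bm{u}\|_{\bm{L}^{2}}^{1/4}\|\nabla\bm{u}\|_{\bm{L}^{2}}^{3/4}$, followed by Young with exponents $(8/7,8)$ to absorb a fraction of $\|\nabla\bm{u}\|_{\bm{L}^{2}}^{2}$ into the dissipation. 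Treating the other three terms identically yields
\[
  \tfrac{d}{dt}\|w\|_{\mathcal{H}^{0}}^{2} + \|\nabla w\|_{\mathcal{H}^{0}}^{2}
  \leq C\bigl(1 + \|y_{1}\|_{\mathcal{L}^{4}}^{8}\bigr)\|w\|_{\mathcal{H}^{0}}^{2}.
\]
Since $y_{1}\in L^{\infty}_{\mathrm{loc}}(\mathbb{R}_{+};\mathcal{L}^{4})$, the prefactor lies in $L^{1}_{\mathrm{loc}}$, and the initial condition (iii) of Definition \ref{exuniq_def_weak} gives $\|w(t)\|_{\mathcal{L}^{2}}\to 0$ as $t\downarrow 0$. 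Gronwall's lemma then forces $w\equiv 0$, i.e.\ $y_{1}=y_{2}$. The main obstacle I anticipate is not the algebra of cancellations nor the interpolation, but the clean justification of the energy equality for $w$ at the regularity of a weak solution; the monotonicity of $F$ is a pleasant bonus that eliminates any delicate handling of the taming difference.
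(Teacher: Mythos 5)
Your proposal is correct and follows the same overall skeleton as the paper's proof (energy estimate for the difference $w=y_1-y_2$, absorption of a fraction of the dissipation, Gronwall), but it departs from the paper in the two substantive estimates, and in both places your route is at least as good. For the convective terms the paper does not use your splitting $a_1b_1-a_2b_2=(a_1-a_2)b_1+a_2(b_1-b_2)$; instead it integrates each difference by parts into the tensor form $\langle\nabla\bm{\varphi},\bm{\psi}_1\otimes\bm{\theta}_1-\bm{\psi}_2\otimes\bm{\theta}_2\rangle$ and bounds $\|\bm{\psi}_1\otimes\bm{\theta}_1-\bm{\psi}_2\otimes\bm{\theta}_2\|_{L^2}$ by adding and subtracting, which produces a Gronwall weight involving both $\|y_1\|_{\mathcal{L}^4}^8$ and $\|y_2\|_{\mathcal{L}^4}^8$; your version, after the self-term and $\bm{B}_2$-cross-term cancellations (which I checked: $\langle(\bm{B}_2\cdot\nabla)\bm{C},\bm{u}\rangle+\langle(\bm{B}_2\cdot\nabla)\bm{u},\bm{C}\rangle=0$ by $\divv\bm{B}_2=0$), needs only $\|y_1\|_{\mathcal{L}^4}^8$, and both weights are integrable by condition (i) of Definition \ref{exuniq_def_weak}, so the Gronwall step closes either way. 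For the taming term the paper estimates $\langle z,g_N(|y_1|^2)y_1-g_N(|y_2|^2)y_2\rangle$ via the mean value theorem, $g_N(r)\le Cr$, Gagliardo--Nirenberg and Young, paying another $\varepsilon\|z\|_{\mathcal{H}^1}^2$; your observation that $y\mapsto g_N(|y|^2)y$ is monotone (since $\langle DF(y)h,h\rangle=2g_N'(|y|^2)\langle y,h\rangle^2+g_N(|y|^2)|h|^2\ge 0$) lets you discard the term with the correct sign and is a genuinely cleaner argument --- it is exactly the structural fact exploited in the locally monotone framework alluded to in the introduction. The one point both you and the paper treat somewhat informally is the justification of testing the difference equation with $w$ itself at the stated regularity of weak solutions (one needs the mixed terms $\langle y_1(t),y_2(t)\rangle_{\mathcal{H}^0}$, not just the single-solution energy equality of Proposition \ref{exuniq_thm_spacetimetest}); you at least flag this explicitly and point to the same mollification argument from \cite{BDPRS07}, so this is not a gap relative to the paper.
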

\begin{proof}
	We fix a $T > 0$ and set $z(t) := y_{1}(t) - y_{2}(t) =: \begin{pmatrix} \bm{v} \\ \bm{B} \end{pmatrix}$. Then we find
	\begin{align*}
		\| z(t) \|_{\mathcal{H}^{0}}^{2} &= -2 \int_{0}^{t} \| \nabla z \|_{\mathcal{H}^{0}}^{2} \diff s - 2 \int_{0}^{t} \langle \bm{v}, (\bm{v}_{1} \cdot \nabla) \bm{v}_{1} - (\bm{v}_{2} \cdot \nabla) \bm{v}_{2} \rangle_{\bm{L}^{2}} \diff s \\
		&+ 2 \int_{0}^{t} \langle \bm{v}, (\bm{B}_{1} \cdot \nabla) \bm{B}_{1} - (\bm{B}_{2} \cdot \nabla) \bm{B}_{2} \rangle_{\bm{L}^{2}} \diff s \\
		&-2 \int_{0}^{t} \langle \bm{B}, (\bm{v}_{1} \cdot \nabla) \bm{B}_{1} - (\bm{v}_{2} \cdot \nabla) \bm{B}_{2} \rangle_{\bm{L}^{2}} \diff s \\
		&+ 2 \int_{0}^{t} \langle \bm{B}, (\bm{B}_{1} \cdot \nabla) \bm{v}_{1} - (\bm{B}_{2} \cdot \nabla) \bm{v}_{2} \rangle_{\bm{L}^{2}} \diff s \\
		&-2 \int_{0}^{t} \langle z, g_{N}(|y_{1}|^{2})y_{1} - g_{N}(|y_{2}|^{2})y_{2} \rangle_{\mathcal{L}^{2}} \diff s \\
		&=: I_{L}(t) + I_{NL}(t) + I_{T}(t).
	\end{align*}
We first investigate the linear term and find, using integration by parts and the definition of the norms $\| \cdot \|_{\mathcal{H}^{1}}$ that
\begin{align*}
	I_{L}(t) = -2 \int_{0}^{t} \| z \|_{\mathcal{H}^{1}}^{2} \diff s - 2 \int_{0}^{t} \| z \|_{\mathcal{H}^{0}}^{2} \diff s.
\end{align*}
The nonlinear term $I_{NL}(t)$ consists of four terms of the same structure which are estimated by Young's inequality:
\begin{align*}
	\langle \bm{\varphi}, (\bm{\psi}_{1} \cdot \nabla) \bm{\theta}_{1} - (\bm{\psi}_{2} \cdot \nabla) \bm{\theta}_{2} \rangle_{\bm{L}^{2}} = - \langle \nabla \bm{\varphi}, \bm{\psi}_{1} \otimes \bm{\theta}_{1} - \bm{\psi}_{2} \otimes \bm{\theta}_{2} \rangle_{\bm{L}^{2}} \leq \frac{1}{4} \| \nabla \bm{\varphi} \|_{\mathbb{H}^{0}}^{2} + \|  \bm{\psi}_{1} \otimes \bm{\theta}_{1} - \bm{\psi}_{2} \otimes \bm{\theta}_{2} \|_{L^{2}}^{2}.
\end{align*}
Thus we need to estimate the latter term which we do by applying  the Sobolev embedding theorem as well as Young's inequality:
\begin{align*}
	 &\|  \bm{\psi}_{1} \otimes \bm{\theta}_{1} - \bm{\psi}_{2} \otimes \bm{\theta}_{2} \|_{L^{2}}^{2} \\
	 &\leq 2 \left( \| \bm{\psi}_{1} - \bm{\psi}_{2} \|_{\bm{L}^{4}}^{2} \| \bm{\theta}_{1} \|_{\bm{L}^{4}}^{2} + \| \bm{\psi}_{2} \|_{\bm{L}^{4}}^{2} \| \bm{\theta}_{1} - \bm{\theta}_{2} \|_{\bm{L}^{4}}^{2}  \right) \\
	 &\leq 2 C_{1,0,2,2,4} \big( \| \bm{\psi}_{1} - \bm{\psi}_{2} \|_{\mathbb{H}^{1}}^{3/2} \| \bm{\psi}_{1} - \bm{\psi}_{2} \|_{\mathbb{H}^{0}}^{1/2} \| \bm{\theta}_{1} \|_{\bm{L}^{4}}^{2} + \| \bm{\psi}_{2} \|_{\bm{L}^{4}}^{2} \| \bm{\theta}_{1} - \bm{\theta}_{2} \|_{\mathbb{H}^{1}}^{3/2} \| \bm{\theta}_{1} - \bm{\theta}_{2} \|_{\mathbb{H}^{0}}^{1/2}  \big) \\
	 &\leq C_{\varepsilon} \left( \| \bm{\psi}_{1} - \bm{\psi}_{2} \|_{\mathbb{H}^{0}}^{2}  \| \bm{\theta}_{1} \|_{\bm{L}^{4}}^{8} + \| \bm{\theta}_{1} - \bm{\theta}_{2} \|_{\mathbb{H}^{0}}^{2}  \| \bm{\psi}_{2} \|_{\bm{L}^{4}}^{8}\right) + \varepsilon \| \bm{\psi}_{1} - \bm{\psi}_{2} \|_{\mathbb{H}^{1}}^{2} + \varepsilon \| \bm{\theta}_{1} - \bm{\theta}_{2} \|_{\mathbb{H}^{1}}^{2}.
\end{align*}
We collect the four terms and use the previous estimates to find (again using the definition of the Sobolev norms)
\begin{align*}
	I_{NL}(t) \leq (1 + 8 \varepsilon)\int_{0}^{t} \| z \|_{\mathcal{H}^{1}}^{2} \diff s + C_{\varepsilon} M_{y_{1},y_{2},t} \int_{0}^{t} \| z \|_{\mathcal{H}^{0}}^{2} \diff s, 
\end{align*}
where, by our definition of weak solutions
$$
	M_{y_{1},y_{2},t} := \esssup_{s \in [0,t]} \left( 1 + \| y_{1} \|_{\mathcal{L}^{4}}^{8} + \| y_{2} \|_{\mathcal{L}^{4}}^{8} \right) < \infty.
$$

Concerning the taming term, $I_{T}(t)$, we have, using the mean-value theorem of calculus (and the fact that $|g'| \leq C$), the inequality $g_{N}(r) \leq C r$, Sobolev's embedding theorem and Young's inequality (for $(p,q) = (4,4/3)$)
\begin{align*}
	&| \langle z, g_{N}(|y_{1}|^{2})y_{1} - g_{N}(|y_{2}|^{2})y_{2} \rangle_{L^{2}} | \leq \int_{\mathbb{R}^{3}} | z |^{2} g_{N}(|y_{1}|^{2}) \diff x + \int_{\mathbb{R}^{3}} | z | | g_{N}(|y_{1}|^{2}) - g_{N}(|y_{2}|^{2}) | | y_{2} | \diff x \\
	&\leq C \int_{\mathbb{R}^{3}} | z |^{2} |y_{1}|^{2} + | z | | |y_{1}|^{2} - |y_{2}|^{2}| |y_{2} | \diff x \\
	&\leq C \int_{\mathbb{R}^{3}} | z |^{2} |y_{1}|^{2} + | z |^{2}  \left( |y_{1}| + |y_{2}| \right) |y_{2} | \diff x \\
	&\leq C \|  z \|_{\mathcal{L}^{4}}^{2} \|  |y_{1}| + |y_{2}| \|_{L^{4}}^{2} \\
	&\leq C \|  z \|_{\mathcal{H}^{1}}^{3/2} \|  z \|_{\mathcal{H}^{0}}^{1/2} \|  |y_{1}| + |y_{2}| \|_{L^{4}}^{2} \\
	&\leq C_{\varepsilon} \| z \|_{\mathcal{H}^{0}}^{2} \left( \| y_{1} \|_{\mathcal{L}^{4}}^{8} + \| y_{2} \|_{\mathcal{L}^{4}}^{8} \right) + \varepsilon \| z \|_{\mathcal{H}^{1}}^{2}.
\end{align*}
This implies that
\begin{align*}
	I_{T}(t) &= -2 \int_{0}^{t} \langle z, g_{N}(|y_{1}|^{2})y_{1} - g_{N}(|y_{2}|^{2})y_{2} \rangle_{\mathcal{L}^{2}} \diff s \\
	&\leq 2\varepsilon \int_{0}^{t} \| z \|_{\mathcal{H}^{1}}^{2} \diff s + C_{\varepsilon} \int_{0}^{t} \| y \|_{\mathcal{H}^{0}}^{2} \left( \| y_{1} \|_{\mathcal{L}^{4}}^{8} + \| y_{2} \|_{\mathcal{L}^{4}}^{8} \right) \diff s \\
	&\leq 2\varepsilon \int_{0}^{t} \| z \|_{\mathcal{H}^{1}}^{2} \diff s + C_{\varepsilon} M_{y_{1},y_{2},t} \int_{0}^{t} \| z \|_{\mathcal{H}^{0}}^{2} \diff s.
\end{align*}
Hence, altogether we have the inequality
\begin{align*}
	\| z(t) \|_{\mathcal{H}^{0}}^{2} &\leq -2 \int_{0}^{t} \| z(s) \|_{\mathcal{H}^{1}}^{2} \diff s + (1 + 10 \varepsilon)\int_{0}^{t} \| z(s) \|_{\mathcal{H}^{1}}^{2} \diff s \\
	&+ C_{\varepsilon} M_{y_{1},y_{2},t} \int_{0}^{t} \| z(s) \|_{\mathcal{H}^{0}}^{2} \diff s.
\end{align*}
Choosing $\varepsilon = \frac{1}{10}$, we find that
\begin{align*}
	\| z(t) \|_{\mathcal{H}^{0}}^{2} \leq C M_{y_{1},y_{2},t} \int_{0}^{t} \| z(s) \|_{\mathcal{H}^{0}}^{2} \diff s
\end{align*}
and Gronwall's lemma implies that $z(s) = 0$ for all $s \in [0,t]$.
\end{proof}

Our next step is to establish existence of weak solutions, i.e., the existence part of Theorem \ref{DTMHD_thm_intro_GWP}.

\begin{proof}[Proof of existence of a weak solution]

We use a Faedo-Galerkin approximation on $[0,T]$. Take an orthonormal basis $\{ e_{k} ~|~ k \in \mathbb{N} \} \subset \mathcal{V}$ of $\mathcal{H}^{1}$ such that $\spann \{ e_{k} \}$ is dense in $\mathcal{H}^{3}$. Fix an $n \in \mathbb{N}$. For $z = \begin{pmatrix} z^{1}, \ldots, z^{n} \end{pmatrix} \in \mathbb{R}^{n}$ and $e = \begin{pmatrix} e_{1}, \ldots, e_{n} \end{pmatrix} \in \mcV^{n}$ set
\begin{align*}
	z \cdot e &:= \sum_{i=1}^{n} z^{i} e_{i} \in \mathcal{V} \\
	b_{n}(z) &:= \left(\langle \mcA(z \cdot e), e_{i} \rangle_{\mcH^{1}} \right)_{i=1}^{n} \\
	f_{n}(t) &:= \left( \langle \rho_{n} * f(t), e_{1} \rangle_{\mathcal{H}^{1}}, \ldots, \langle \rho_{n} * f_{n}(t), e_{n} \rangle_{\mathcal{H}^{1}} \right),
\end{align*}
where the $\rho_{n}$ are a family of mollifiers such that 
$$
	\| \rho_{n} * f(t) \|_{\mathcal{H}^{0}} \leq \| f(t) \|_{\mathcal{H}^{0}}, \quad \lim_{n \rightarrow \infty} \| \rho_{n} * f(t) - f(t) \|_{\mathcal{H}^{0}} = 0.
$$
Now we consider the ordinary differential equation
	\begin{equation}\label{exuniq_eq_Galerkin_ODE}
		\begin{cases} \frac{\diff z_{n}}{\diff t}(t) &= b_{n}(z_{n}(t)) + f_{n}(t), \\
		z_{n}(0) &= \left( \langle y_{0}, e_{i} \rangle_{\mathcal{H}^{1}} \right)_{i=1}^{n}.
		\end{cases}
	\end{equation}
Then we have
\begin{align*}
	\langle z, b_{n}(z) \rangle_{\mathbb{R}^{n}} = \sum_{i=1}^{n} z^{i} \langle \mcA(z \cdot e), e_{i} \rangle_{\mcH^{1}} = \langle \mcA(z \cdot e), z \cdot e \rangle_{\mcH^{1}}.
\end{align*}
Noting that $z \cdot e \in \mathcal{H}^{3} \subset \mathcal{H}^{2}$, the estimate \eqref{ar_eq_main_estimate} on $\langle \mcA(y),y \rangle_{\mcH^{1}}$ from Lemma \ref{ar_thm_estimates} then yields
$$
	\langle z, b_{n}(z) \rangle_{\mathbb{R}^{n}} = \langle \mcA(z \cdot e), z \cdot e \rangle_{\mcH^{1}} \leq C_{n,N} |z|^{2},
$$
where the constant $C_{n,N}$ contains the norms of all the $e_{i}$ for $i=1,\ldots,n$ (as all the terms on the right-hand side of \eqref{ar_eq_main_estimate} are quadratic in $y$). Moreover, since
$$
	z \mapsto b_{n}(z) = \left( \langle \mcA(z \cdot e), e_{i} \rangle_{\mcH^{1}} \right)_{i=1}^{n} \in \mathbb{R}^{n}
$$
is a polynomial in the components of $z$ in each component, it is a smooth map. Hence, the differential equation \eqref{exuniq_eq_Galerkin_ODE} has a unique solution $z_{n}(t)$ such that
$$
	z_{n}(t) = z_{n}(0) + \int_{0}^{t} b_{n}(z_{n}(s)) \diff s + \int_{0}^{t} f_{n}(s) \diff s, \quad t \geq 0.
$$
Now we set
\begin{align*}
	y_{n}(t) &:= z_{n}(t) \cdot e = \sum_{i=1}^{n} z_{n}^{i}(t) e_{i}, \\
	\prod_{n} \mcA(y_{n}(t)) &:= \sum_{i=1}^{n} \langle \mcA(y_{n}(t)), e_{i} \rangle_{\mcH^{1}} e_{i}, \\
	\prod_{n} f(t) &:= \sum_{i=1}^{n} \langle \rho_{n} * f(t), e_{i} \rangle_{\mathcal{H}^{1}} e_{i}.
\end{align*}
Then the function $y_{n}$ satisfies the differential equation
\begin{align*}
	\partial_{t} y_{n}(t) &= \left( \partial_{t} z_{n}(t) \right) \cdot e = \left( b_{n}(z_{n}(t)) \cdot e \right) + \left( f_{n}(t) \cdot e \right) \\
	&= \prod_{n} \mcA(y_{n}(t)) + \prod_{n} f(t)
\end{align*}
and for all $n \geq k$
\begin{equation} \label{exuniq_eq_eqnfinitedim}
	\begin{split}
	\langle y_{n}(t), e_{k} \rangle_{\mathcal{H}^{1}} &= \langle y_{n}(0), e_{k} \rangle_{\mathcal{H}^{1}} \\
	&\quad + \int_{0}^{t} \Big\langle \prod_{n} \mcA(y_{n}(s)), e_{k} \Big\rangle_{\mathcal{H}^{1}} \diff s + \int_{0}^{t} \Big\langle \prod_{n} f(s), e_{k} \Big\rangle_{\mathcal{H}^{1}} \diff s \\
	&= \langle y_{0}, e_{k} \rangle_{\mathcal{H}^{1}} + \int_{0}^{t} \langle \mcA(y_{n}(s)), e_{k} \rangle_{\mcH^{1}} \diff s + \int_{0}^{t} \langle \rho_{n} * f(s), e_{k} \rangle_{\mathcal{H}^{1}} \diff s.
	\end{split}
\end{equation}
This implies that
$$
	\| y_{n}(t) \|_{\mathcal{H}^{1}}^{2} = \| y_{0} \|_{\mathcal{H}^{1}}^{2} + 2 \int_{0}^{t} \langle \mcA(y_{n}(s)), y_{n}(s) \rangle_{\mcH^{1}} \diff s + 2 \int_{0}^{t} \langle \rho_{n} * f(s), y_{n}(s) \rangle_{\mathcal{H}^{1}} \diff s.
$$
Using the definition of $\mathcal{H}^{1}$, the self-adjointness of $(I - \Delta)$ and Young's inequality for the last term as well as \eqref{ar_eq_main_estimate} for the second term (dropping the nonlinear terms, all of which have negative signs), we find that
\begin{align*}
	\| y_{n}(t) \|_{\mathcal{H}^{1}}^{2}
	&\leq \| y_{0} \|_{\mathcal{H}^{1}}^{2} - \int_{0}^{t} \frac{1}{2} \| y_{n} \|_{\mathcal{H}^{2}}^{2} + 2\| y_{n} \|_{\mathcal{H}^{0}}^{2} + 4(N + 1) \| \nabla y_{n} \|_{\mathcal{H}^{0}}^{2} \diff s  + 2 \int_{0}^{t} \| f(s) \|_{\mathcal{H}^{0}}^{2} \diff s.
\end{align*}
This implies
\begin{equation}\label{exuniq_eq_H1H2_est}
	\| y_{n}(t) \|_{\mathcal{H}^{1}}^{2} + \int_{0}^{t} \| y_{n} \|_{\mathcal{H}^{2}}^{2} \diff s \leq C_{N} \left(\| y_{0} \|_{\mathcal{H}^{1}}^{2} + \int_{0}^{t} \| y_{n} \|_{\mathcal{H}^{1}}^{2} \diff s + \int_{0}^{t} \| f \|_{\mathcal{H}^{0}}^{2} \diff s \right).
\end{equation}
Dropping the second term on the left-hand side and using Gronwall's lemma, we find that
$$
	\sup_{t \leq T} \| y_{n}(t) \|_{\mathcal{H}^{1}}^{2} \leq C_{y_{0},N,T,f}.
$$
Using this information in \eqref{exuniq_eq_H1H2_est}, we find that also
$$
	\int_{0}^{t} \| y_{n} \|_{\mathcal{H}^{2}}^{2} \diff s \leq C_{y_{0},N,T,f}.
$$
Now for a fixed $k \in \mathbb{N}$, set $G_{n}^{(k)}(t) := \langle y_{n}(t), e_{k} \rangle_{\mathcal{H}^{1}}$. Then by the preceding step, the $G_{n}^{(k)}$ are uniformly bounded on $[0,T]$. Furthermore, they are equi-continuous, as can be seen from
\begin{align*}
	| G_{n}^{(k)}(t) - G_{n}^{(k)}(r) | &= |\langle y_{n}(t), e_{k} \rangle_{\mathcal{H}^{1}} - \langle y_{n}(r), e_{k} \rangle_{\mathcal{H}^{1}}| \\
	&= \left| \int_{r}^{t} \langle \mcA(y_{n}(s), e_{k} \rangle_{\mcH^{1}} \diff s + \int_{r}^{t} \langle \rho_{n} * f(s), e_{k} \rangle_{\mathcal{H}^{1}} \diff s \right| \\
	&\leq C \int_{r}^{t} (1 + \| y_{n}(s) \|_{\mathcal{H}^{1}}^{3}) \| e_{k} \|_{\mathcal{H}^{3}} \diff s +  \| e_{k} \|_{\mathcal{H}^{2}} \int_{r}^{t} \| f(s) \|_{\mathcal{H}^{0}} \diff s,
\end{align*}
and equation \eqref{exuniq_eq_H1H2_est}, where we used Lemma \ref{ar_thm_estimates} \ref{ar_itm_H3_bound}. Therefore, the theorem of Arzel\`{a}-Ascoli implies that $\left( G_{n}^{(k)} \right)_{n \in \mathbb{N}}$ is sequentially relatively compact with respect to the uniform topology and hence there is a subsequence such that $\left( G_{n_{l}^{k}}^{(k)} \right)_{l \in \mathbb{N}}$ converges uniformly to a limit $G^{(k)}$. Now a diagonalisation argument implies that there is a subsequence, which we denote again by $(G_{n}^{k})_{n \in \N}$
$$
	\forall k \in \mathbb{N}: \quad \lim_{n \rightarrow \infty} \sup_{t \in [0,T]} | G_{n}^{(k)}(t) - G^{(k)}(t) | = 0.
$$
Again invoking \eqref{exuniq_eq_H1H2_est}, we see that $\sup_{n \in \N} \sup_{t \in [0,T]} \| y_{n}(t) \|_{\mathcal{H}^{1}}^{2} \leq C$. Since closed balls of $\mathcal{H}^{1}$ are weakly compact by the Banach-Alaoglu theorem, we find that for almost all $t \in [0,T]$ we have the $\mathcal{H}^{1}$-weak convergence $y_{n}(t) \rightharpoonup y(t)$ as $n \rightarrow \infty$. To conclude that this holds true for all $t \in [0,T]$ we note that $G^{(k)}$, as the uniform limit of continuous functions, is continuous, and that on the other hand by the weak convergence just mentioned,
$$
	G_{n}^{(k)}(t) = \langle y_{n}(t), e_{k} \rangle_{\mathcal{H}^{1}} \rightarrow \langle y(t), e_{k} \rangle_{\mathcal{H}^{1}}.
$$
Hence, $t \mapsto \langle y(t), e_{k} \rangle_{\mathcal{H}^{1}}$ is continuous for all $k \in \mathbb{N}$ and by the density of the $\{ e_{k} \}_{k \in \N}$, we find that $t \mapsto \langle y(t), \tilde{y} \rangle_{\mathcal{H}^{1}}$ is continuous for all $\tilde{y} \in \mathcal{H}^{1}$. We can thus conclude that $t \mapsto y(t)$ is weakly continuous in $\mathcal{H}^{1}$, and that for all $\tilde{y} \in \mathcal{H}^{1}$:
\begin{align*}
	\lim_{n \rightarrow \infty} \sup_{t \in [0,T]} | \langle y_{n}(t) - y(t), \tilde{y} \rangle_{\mathcal{H}^{1}} | = 0.
\end{align*}
This implies (by considering $\tilde{y} = (I - \Delta)^{-1} \tilde{z} \in \mathcal{H}^{2} \subset \mathcal{H}^{1}$ for $z \in \mathcal{H}^{0}$ and using the formal self-adjointness of $(I - \Delta)$)
$$
		\lim_{n \rightarrow \infty} \sup_{t \in [0,T]} | \langle y_{n}(t) - y(t), \tilde{z} \rangle_{\mathcal{H}^{0}} | = 0.
$$
We next invoke the Helmholtz-Weyl decomposition $\mathcal{L}^{2} = \mathcal{H}^{0} \oplus \left( \mathcal{H}^{0} \right)^{\perp}$. Since $y_{n}(t) - y(t) \in \mathcal{H}^{1} \subset \mathcal{H}^{0}$, this allows us to conclude that
\begin{equation}\label{exuniq_eq_unifL2conv}
		\lim_{n \rightarrow \infty} \sup_{t \in [0,T]} | \langle y_{n}(t) - y(t), \tilde{z} \rangle_{\mathcal{L}^{2}} | = 0 \quad \forall \tilde{z} \in \mathcal{L}^{2}.
\end{equation}
To be more precise, we can use the Helmholtz-Weyl decomposition for $L^{2}(\mathbb{R}^{3};\mathbb{R}^{3})$ for both the velocity component and the magnetic field component of $y_{n} - y$, and putting these two together yields \eqref{exuniq_eq_unifL2conv}. Note that this equation implies the component-wise convergence
\begin{equation}\label{exuniq_eq_unifL2conv_compwise}
		\lim_{n \rightarrow \infty} \sup_{t \in [0,T]} | \langle \phi_{i,n}(t) - \phi_{i}(t), \tilde{z} \rangle_{L^{2}} | = 0 \quad \forall z \in L^{2}(\R^{3};\R), \phi \in \{ \bm{v} , \bm{B} \}.
\end{equation}
This can be seen by taking $\tilde{z}$ of the form $\tilde{z} = (0, \ldots, 0, z, 0, \ldots, 0)$, where $z \in L^{2}(\R^{3};\R)$. 

From this equation as well as \eqref{exuniq_eq_H1H2_est} and Fatou's lemma, we get
\begin{align*}
	\int_{0}^{T} \| y(s) \|_{\mathcal{H}^{2}}^{2} \diff s \leq \liminf_{n \rightarrow \infty} \int_{0}^{T} \| y_{n}(s) \|_{\mathcal{H}^{2}}^{2} \diff s < \infty.
\end{align*}
Next we want to show that $y$ is indeed a solution of the tamed MHD equations \eqref{intro_eq_TNSE}.

To this end, recall first the following Friedrichs' inequality\footnote{The first use of the inequality in the context of the Navier-Stokes equations seems to be in E. Hopf \cite[p. 230]{Hopf51}. Hopf uses the inequality and cites R. Courant's and D. Hilbert's book \cite{CH43}. The inequality and a proof can be found in Chapter VII, Paragraph 3, Section 1, Satz 1, p. 489. Hopf also notes that the statement is not true for arbitrary bounded domains. For a more modern presentation, cf. J.C. Robinson, J.L. Rodrigo and W. Sadowski \cite[Exercises 4.2--4.9, pp. 107 ff.]{RRS16}.} (see e.g. \cite{Lady69}): let $Q \subset \mathbb{R}^{3}$ be a bounded cuboid. Then for all $\varepsilon > 0$ there is a $K_{\varepsilon} \in \mathbb{N}$ and functions $h_{i}^{\varepsilon} \in L^{2}(G)$, $i=1, \ldots, K_{\varepsilon}$ such that for all $w \in W_{0}^{1,2}(G)$ 
\begin{equation}\label{DTMHD_exuniq_eq_Friedrichs}
	\int_{Q} | w(x) |^{2} \diff x \leq \sum_{i=1}^{K_{\varepsilon}} \left( \int_{Q} w(x) h_{i}^{\varepsilon}(x) \diff x \right)^{2} + \varepsilon \int_{Q} | \nabla w(x) |^{2} \diff x.
\end{equation}
Now let $G \subset \bar{G} \subset Q$ and choose a smooth cutoff function $\rho$ such that $1 \geq \rho \geq 0$, $\rho \equiv 1$ on $G$ and $\supp \rho \subset Q$. Then we have for all $j = 1, 2,3$ and $\phi \in \{ \bm{v}, \bm{B} \}$ that $\rho (\phi_{j,n}(t, \cdot) - \phi_{j}(t,\cdot)) \in W_{0}^{1,2}(Q)$ and hence by applying Friedrichs' inequality, we find
\begin{align*}
		\int_{G} | y_{n}(t,x) - y(t,x) |^{2} \diff x 
		&\leq \sum_{j=1}^{3} \int_{Q} \rho^{2}(x) | v_{j,n}(t,x) - v_{j}(t,x) |^{2} \diff x + \int_{Q} \rho^{2}(x) | B_{j,n}(t,x) - B_{j}(t,x) |^{2} \diff x \\
	&\leq \sum_{j=1}^{3} \sum_{i=1}^{K_{\varepsilon}} \left( \int_{Q} ( v_{j,n} - v_{j} ) \rho h_{i}^{\varepsilon} \diff x \right)^{2} + \varepsilon \sum_{j=1}^{3} \int_{Q} | \nabla \left[ \rho (v_{j,n} - v_{j} ) \right] |^{2} \diff x \\
		&\quad+ \sum_{j=1}^{3} \sum_{i=1}^{K_{\varepsilon}} \left( \int_{Q} ( B_{j,n} - B_{j} ) \rho h_{i}^{\varepsilon} \diff x \right)^{2} + \varepsilon \sum_{j=1}^{3} \int_{Q} | \nabla \left[ \rho (B_{j,n} - B_{j} ) \right] |^{2} \diff x.
\end{align*}
The first and third terms in the last two lines vanish in the limit $n \rightarrow \infty$ by \eqref{exuniq_eq_unifL2conv_compwise}, since $\rho h_{i}^{\varepsilon} \in L^{2}(\R^{3})$. To the second and fourth term (those proportional to $\varepsilon$), we apply the product rule for weak derivatives (see e.g. \cite[Theorem 5.2.3.1 $(iv)$, pp. 261 f.]{Evans10}) and \eqref{exuniq_eq_H1H2_est} to see that the integrals are bounded. As $\varepsilon > 0$ is arbitrary, we find
\begin{equation} \label{exuniq_eq_convbddset}
	\lim_{n \rightarrow \infty} \sup_{t \in [0,T]} \int_{G} | y_{n}(t,x) - y(t,x) |^{2} \diff x  = 0.
\end{equation}
Now let, for $k \in \mathbb{N}$, $\supp(e_k) \subset G_{k}$ for bounded sets $G_{k}$. If we fix $s \in [0,t]$, then by \eqref{exuniq_eq_H1H2_est} and \eqref{exuniq_eq_convbddset} we get 
\begin{align*}
\sup_{n} \| y_{n}(s, \cdot) \|_{\mathcal{H}^{1}} < \infty \quad \text{and} \quad \lim_{n} \| ( y_{n}(s,\cdot) - y(s,\cdot) ) 1_{G_{k}} \|_{L^{2}} = 0.
\end{align*}
Thus an application of Lemma \ref{ar_thm_convergence} and Lebesgue's dominated convergence theorem yields
\begin{align*}
	\int_{0}^{t} \langle \mcA(y_{n}(s)), e_{k} \rangle_{\mcH^{1}} \diff s \rightarrow \int_{0}^{t} \langle \mcA(y(s)), e_{k} \rangle_{\mcH^{1}} \diff s.
\end{align*}
Having established this convergence, we can take limits $n \rightarrow \infty$ in \eqref{exuniq_eq_eqnfinitedim} to find
\begin{align*}
	\langle y(t), e_{k} \rangle_{\mathcal{H}^{1}} = \langle y_{0}, e_{k} \rangle_{\mathcal{H}^{1}} +  \int_{0}^{t} \langle \mcA(y(s)), e_{k} \rangle_{\mcH^{1}} \diff s +  \int_{0}^{t} \langle f(s), (I - \Delta) e_{k} \rangle_{\mathcal{H}^{0}} \diff s.
\end{align*}
As this equation is linear in $e_{k}$, it holds for linear combinations and since $\spann \{e_{k}\}$ forms a dense subset in $\mathcal{H}^{3}$, we conclude
\begin{align*}
	\langle y(t), \tilde{y} \rangle_{\mathcal{H}^{1}} = \langle y_{0}, \tilde{y} \rangle_{\mathcal{H}^{1}} +  \int_{0}^{t} \langle \mcA(y(s)), \tilde{y} \rangle_{\mcH^{1}} \diff s +  \int_{0}^{t} \langle f(s), (I - \Delta) \tilde{y} \rangle_{\mathcal{H}^{0}} \diff s.
\end{align*}
Now, letting $\tilde{y} := (I - \Delta)^{-1} \bar{y}$ for $\bar{y} \in \mathcal{H}^{3}$,
\begin{align*}
		\langle y(t), \bar{y} \rangle_{\mathcal{H}^{0}} = \langle y_{0}, \bar{y} \rangle_{\mathcal{H}^{0}} +  \int_{0}^{t} \langle \mcA(y(s), \bar{y} \rangle_{\mcH^{0}} \diff s +  \int_{0}^{t} \langle f(s), \bar{y}\rangle_{\mathcal{H}^{0}} \diff s,
\end{align*}
that is, Equation \eqref{exuniq_eq_defweak}.

We are left to prove \ref{exuniq_itm_H0est} - \ref{exuniq_itm_pointwiseeqn}. We will start with \ref{exuniq_itm_pointwiseeqn}. In equation \eqref{exuniq_eq_defweak} we set $\tilde{y} = (\tilde{\bm{v}}, 0)$ and we find for almost all $t \geq 0$ that
\begin{equation}\label{exuniq_eq_projv}
		\frac{\partial \bm{v} }{\partial t} = \Delta \bm{v} - \mathcal{P} \left[ \left( \bm{v} \cdot \nabla \right) \bm{v}  +  \left( \bm{B} \cdot \nabla \right) \bm{B} \right] - \mathcal{P} \left[ g_{N}(|y|^{2}) \bm{v} \right] + \bm{f}_{1}(t)
\end{equation}
and infer from \cite[Proposition 1.1.]{Temam01} the existence of a function $\bar{p}$ with $ \nabla \bar{p} \in L_{\text{loc}}^{2}(\mathbb{R}_{+};L^{2}(\mathbb{R}^{3}) )$ such that (for almost all $t \geq 0$)
\begin{align*}
		\frac{\partial \bm{v} }{\partial t} = \Delta \bm{v} - \left( \bm{v} \cdot \nabla \right) \bm{v}  +  \left( \bm{B} \cdot \nabla \right) \bm{B} - g_{N}(|y|^{2}) \bm{v} + \nabla \bar{p} + \bm{f}_{1}(t).
\end{align*}
Now we \emph{define} the pressure $p$ by
\begin{align*}
	\bar{p} = p + \frac{|\bm{B}|^{2}}{2},
\end{align*}
and we observe that since $\nabla \frac{|\bm{B}|^{2}}{2} = (\bm{B} \cdot \nabla) \bm{B} \in L^{2}_{\text{loc}}(\mathbb{R}_{+}; L^{2}(\mathbb{R}^{3}; \mathbb{R}^{3}))$ due to \eqref{exuniq_eq_soln_H1est}, and by the regularity of $\bar{p}$, $p$ also satisfies the right regularity condition $\nabla p \in L_{\text{loc}}^{2}(\mathbb{R}_{+};L^{2}(\mathbb{R}^{3}) )$.

In the same way (cf. Section \ref{DTMHD_sssec_MPP}), by testing against $\tilde{y} = (0, \tilde{\bm{B}}) \in \mcV$ to get
\begin{equation}\label{exuniq_eq_projB}
			\frac{\partial \bm{B}}{\partial t}	= \Delta \bm{B} - \mcP \left( \bm{v} \cdot \nabla \right) \bm{B} + \mcP (\bm{B} \cdot \nabla) \bm{v}  - \mcP \left[ g_{N}(| (\bm{v}, \bm{B}) |^{2}) \bm{B} \right] + \bm{f}_{2},
\end{equation}
we can find a $\pi$ such that $\nabla \pi \in L_{\text{loc}}^{2}(\mathbb{R}_{+};L^{2}(\mathbb{R}^{3}))$ (note that this function in general cannot be expected to be equal to zero for our equation, as we argued in Section \ref{DTMHD_sssec_MPP}) such that for almost all $t \geq 0$
\begin{align*}
		\frac{\partial \bm{B}}{\partial t}	= \Delta \bm{B} - \left( \bm{v} \cdot \nabla \right) \bm{B} + (\bm{B} \cdot \nabla) \bm{v} + \nabla \pi - g_{N}(| (\bm{v}, \bm{B}) |^{2}) \bm{B} + \bm{f}_{2}.
\end{align*}
These two statements imply \ref{exuniq_itm_pointwiseeqn}.

Next we want to prove \ref{exuniq_itm_H0est}. We take the scalar product in $\mathcal{H}^{0}$ of the system \eqref{exuniq_eq_projv}, \eqref{exuniq_eq_projB} with $y(t)$ as well as \eqref{ar_eq_0eq} to find
\begin{equation}\label{exuniq_eq_H0_est_time_derivative}
	\begin{split}
	\langle \partial_{t} y(t), y(t) \rangle_{\mathcal{H}^{0}} &= \langle \mcA(y(t)), y(t) \rangle_{\mcH^{0}} + \langle f(t), y(t) \rangle_{\mathcal{H}^{0}} \\
	&= - \| \nabla y(t) \|_{\mathcal{H}^{0}}^{2} - \| \sqrt{g_{N}(|y(t)|^{2})} |y(t)| \|_{L^{2}}^{2} + \langle f(t), y(t) \rangle_{\mathcal{H}^{0}} \\
	&\leq - \| \nabla y(t) \|_{\mathcal{H}^{0}}^{2} - \| \sqrt{g_{N}(|y(t)|^{2})} |y(t)| \|_{L^{2}}^{2} + \| f(t) \|_{\mathcal{H}^{0}} \| y(t) \|_{\mathcal{H}^{0}},
	\end{split}
\end{equation}
which yields (as $\langle \partial_{t} y(t), y(t) \rangle_{\mathcal{H}^{0}} = \frac{1}{2} \frac{\diff }{\diff t} \| y(t) \|_{\mathcal{H}^{0}}^{2} = \| y(t) \|_{\mathcal{H}^{0}} \frac{\diff }{\diff t} \| y(t) \|_{\mathcal{H}^{0}}$)
\begin{align*}
	\frac{\diff }{\diff t} \| y(t) \|_{\mathcal{H}^{0}} \leq \| f(t) \|_{\mathcal{H}^{0}}.
\end{align*}
Integrating this inequality gives 
\begin{equation}\label{exuniq_eq_yH0_est}
	\| y(t) \|_{\mathcal{H}^{0}} \leq \| y_{0} \|_{\mathcal{H}^{0}} + \int_{0}^{t} \| f(s) \|_{\mathcal{H}^{0}} \diff s,
\end{equation}
and integrating \eqref{exuniq_eq_H0_est_time_derivative}, we find
\begin{equation}\label{exuniq_eq_yH1tamed_est}
	\begin{split}
	&\int_{0}^{t} \| \nabla y(s) \|_{\mathcal{H}^{0}}^{2} + \| \sqrt{g_{N}(|y(s)|^{2})} |y(s)| \|_{L^{2}}^{2} \diff s \leq \frac{1}{2} \| y_{0} \|_{\mathcal{H}^{0}}^{2} + \int_{0}^{t} \| f(s) \|_{\mathcal{H}^{0}} \| y(s) \|_{\mathcal{H}^{0}} \diff s \\
	&\leq \frac{1}{2} \| y_{0} \|_{\mathcal{H}^{0}}^{2} + \int_{0}^{t} \| f(s) \|_{\mathcal{H}^{0}} \left( \| y_{0} \|_{\mathcal{H}^{0}} + \int_{0}^{s} \| f(r) \|_{\mathcal{H}^{0}} \diff r \right) \diff s \leq \| y_{0} \|_{\mathcal{H}^{0}}^{2} + \frac{3}{2} \left[\int_{0}^{t} \| f(s) \|_{\mathcal{H}^{0}} \diff s \right]^{2}.
	\end{split}
\end{equation}
Thus we have shown \ref{exuniq_itm_H0est}. 

For \ref{exuniq_itm_H1est}, we note that
\begin{align*}
	\mathcal{H}^{2} \hookrightarrow \mathcal{H}^{1} \hookrightarrow \mathcal{H}^{0}
\end{align*}
forms a Gelfand triple and thus by \cite[Chapter III, Section 1, Lemma 1.2, p. 260 f.]{Temam01} and \eqref{exuniq_eq_projv}, \eqref{exuniq_eq_projB} we get the equality
\begin{align*}
	\| y(t) \|_{\mathcal{H}^{1}}^{2} = \| y_{0} \|_{\mathcal{H}^{1}}^{2} + 2 \int_{0}^{t} \langle \mcA(y),y \rangle_{\mcH^{1}} \diff s + 2 \int_{0}^{t} \langle f, y \rangle_{\mathcal{H}^{1}} \diff s.
\end{align*}
The right-hand side is continuous in $t$ and thus together with the weak continuity of $t \mapsto y(t) \in \mathcal{H}^{1}$ by \cite[Proposition 21.23 (d), p. 258]{Zeidler90IIa} we get that $t \mapsto y(t) \in \mathcal{H}^{1}$ is strongly continuous. We then apply \eqref{ar_eq_main_estimate}, \ref{exuniq_itm_H0est} and Young's inequality to find
\begin{align*}
	&\| y(t) \|_{\mathcal{H}^{1}}^{2} \\
	&\leq \| y_{0} \|_{\mathcal{H}^{1}}^{2} - \int_{0}^{t}  \| y \|_{\mathcal{H}^{2}}^{2} \diff s +  2 \int_{0}^{t} \| y \|_{\mathcal{H}^{0}}^{2} \diff s + 2(N + 1) \int_{0}^{t} \| \nabla y \|_{\mathcal{H}^{0}}^{2} \diff s  \\
		& - 2 \int_{0}^{t} \| | \bm{v} | | \nabla \bm{v} | \|_{\bm{L}^{2}}^{2}  - \| | \bm{B} | | \nabla \bm{B} | \|_{\bm{L}^{2}}^{2} - \| | \bm{v}| | \nabla \bm{B} | \|_{\bm{L}^{2}}^{2} - \| | \bm{B} | | \nabla \bm{v} | \|_{\bm{L}^{2}}^{2} \diff s \\
		&  + 2 \int_{0}^{t} \| f \|_{\mathcal{H}^{0}} \| y \|_{\mathcal{H}^{2}} \diff s \\
		&\leq C \left(  \| y_{0} \|_{\mathcal{H}^{1}}^{2} + \int_{0}^{t} \| f \|_{\mathcal{H}^{0}} \diff s \right) +  C(1+ N + t) \left( \| y_{0} \|_{\mathcal{H}^{0}}^{2} + \left[ \int_{0}^{t} \| f \|_{\mathcal{H}^{0}} \diff s \right]^{2} \right)  \\
		& - 2 \int_{0}^{t} \| | \bm{v} | | \nabla \bm{v} | \|_{\bm{L}^{2}}^{2}  - \| | \bm{B} | | \nabla \bm{B} | \|_{\bm{L}^{2}}^{2} - \| | \bm{v}| | \nabla \bm{B} | \|_{\bm{L}^{2}}^{2} - \| | \bm{B} | | \nabla \bm{v} | \|_{\bm{L}^{2}}^{2} \diff s \\
		& - \frac{1}{2} \int_{0}^{t}  \| y \|_{\mathcal{H}^{2}}^{2} \diff s.
\end{align*}
Hence we can conclude that
\begin{align*}
	&\| y(t) \|_{\mathcal{H}^{1}}^{2} + \frac{1}{2} \int_{0}^{t}  \| y \|_{\mathcal{H}^{2}}^{2} \diff s \\
	&\quad + 2 \int_{0}^{t} \| | \bm{v} | | \nabla \bm{v} | \|_{\bm{L}^{2}}^{2}  + \| | \bm{B} | | \nabla \bm{B} | \|_{\bm{L}^{2}}^{2} + \| | \bm{v}| | \nabla \bm{B} | \|_{\bm{L}^{2}}^{2} + \| | \bm{B} | | \nabla \bm{v} | \|_{\bm{L}^{2}}^{2} \diff s \\
	&\leq C \left(  \| y_{0} \|_{\mathcal{H}^{1}}^{2} + \int_{0}^{t} \| f \|_{\mathcal{H}^{0}} \diff s \right) +  C(1+ N + t) \left( \| y_{0} \|_{\mathcal{H}^{0}}^{2} + \left[ \int_{0}^{t} \| f \|_{\mathcal{H}^{0}} \diff s \right]^{2} \right),
\end{align*}
which implies \eqref{exuniq_eq_soln_H1est}.
\end{proof}

\subsection{Existence, Uniqueness and Regularity of a Strong Solution}\label{sec:DTMHD_regul}
In this section, we will show that for smooth initital data, the TMHD equations admit a smooth solution, i.e., Theorem \ref{DTMHD_thm_reg}. To prove this, we have to prove their regularity, which is done via the regularity result of Appendix \ref{chap:FJR}. 
We use the notation from Appendix \ref{chap:FJR}. We denote the space-time $L^{p}$ norms by $\| y \|_{\mathcal{L}^{p}(S_{T})}$. Let $(\mathcal{F}_{t})_{t \geq 0}$ be the Gaussian heat semigroup on $\mathbb{R}^{3}$. We define its action on a function by the space-convolution
\begin{align*}
	\mathcal{F}_{t} h(x) := \frac{1}{(4 \pi t)^{3/2}} \int_{\mathbb{R}^{3}} e^{- \frac{|x-z|^2}{4t}} h(z) \diff z.
\end{align*}
We can thus rewrite the operator $\bar{\mathcal{B}}$ as
\begin{align*}
	\bar{\mathcal{B}}(u,u)_{i}(t,x) = \sum_{j=1}^{3} \int_{0}^{t} \left(D_{x_{j}} \mathcal{F}_{t-s} \right) \left[ u^{j}(s) u_{i}(s) - \sum_{k} R_{i} R_{k} u^{k}(s) u^{j}(s) \right](x) \diff s.
\end{align*}

Then by Appendix \ref{chap:FJR}, Theorem \ref{AppA_thm_equiv}, the weak solution $y$ constructed in Theorem \ref{DTMHD_thm_intro_GWP} satisfies the integral equation
\begin{align*}
	y(t,x) = f_{N}(t,x) - \mathcal{B}(y,y)(t,x),
\end{align*}
where
\begin{align*}
	f_{N}(t,x) &:= \mathcal{F}_{t} y_{0} - \Gamma \bar{\oast} \left( 1_{\mathbb{R}_{+}} \mathcal{P} (g_{N}(|y|^{2})y - f) \right)(t,x) \\
		&:= \mathcal{F}_{t} y_{0} - \int_{0}^{t} \mathcal{F}_{t-s} \left( \mathcal{P} (g_{N}(|y(s)|^{2})y(s) - f(s)) \right)(x) \diff s.
\end{align*}
The Riesz projection term vanishes here because the Helmholtz-Leray projection $\mathcal{P}$ ensures that the divergence of the taming term is zero, and the forcing term has zero divergence by assumption.

\begin{proof}[Proof of Theorem \ref{DTMHD_thm_reg}]
	We first prove the regularity statement. To this end, we show the following for all $k \in \mathbb{N}$:
	\begin{equation}\label{exuniq_eq_induction_hypo}
		y \in \mathcal{L}^{10 \cdot (\frac{5}{3})^{k-1}}(S_{T}), D_{x}^{\alpha} D_{t}^{j} y \in \mathcal{L}^{2 \cdot (\frac{5}{3})^{k}}(S_{T}), \quad | \alpha | + 2j \leq 2.
	\end{equation}
We use a proof by induction.
\newline
$\underline{k=1}$. First, by the Sobolev embedding theorem, we have
\begin{align*}
	\| y \|_{\mathcal{L}^{10}(S_{T})}^{10}& = \int_{0}^{T} \int_{\mathbb{R}^{3}} | y(s,x) |^{10} \diff x \diff s \leq C_{2,0,2,6,10}^{10} \int_{0}^{T} \left( \| y \|_{\mathcal{H}^{2}}^{1/5} \| y \|_{\mathcal{L}^{6}}^{4/5} \right)^{10} \diff s \\
	&= C_{2,0,2,6,10}^{10} \int_{0}^{T} \| y \|_{\mathcal{H}^{2}}^{2} \| y \|_{\mathcal{L}^{6}}^{8} \diff s \leq C \int_{0}^{T} \| y \|_{\mathcal{H}^{2}}^{2} \| y \|_{\mathcal{H}^{1}}^{8} \diff s \\
	&\leq C \sup_{t \in [0,T]} \| y(t) \|_{\mathcal{H}^{1}}^{8}  \int_{0}^{T} \| y(s) \|_{\mathcal{H}^{2}}^{2}  \diff s < \infty.
\end{align*}
Hence we find that 
\begin{align*}
	g_{N}(|y|^{2})y \in \mathcal{L}^{10/3}(S_{T}).
\end{align*}
Now, as $y_{0}$ and $f$ are smooth, by Lemma \ref{AppA_thm_keylemma}, we find that
\begin{align*}
	D_{x}^{\alpha} D_{t}^{j} f_{N} \in \mcL^{10/3}(S_{T}), \quad | \alpha| + 2j \leq 2.
\end{align*}
An application of Theorem \ref{AppA_thm_reg} then yields
\begin{align*}
	D_{x}^{\alpha} D_{t}^{j} y \in \mcL^{10/3}(S_{T}), \quad | \alpha| + 2j \leq 2.	
\end{align*}
\newline
$\underline{k \rightarrow k+1}$. Assume \eqref{exuniq_eq_induction_hypo}. We want to apply the Sobolev embedding theorem, which is justified as
\begin{align*}
	\frac{0}{3} + \frac{1}{5} \left( \frac{1}{2 \cdot \left( \frac{5}{3}\right)^{k}} - \frac{2}{3} \right)  + \frac{1 - 1/5}{6} = \frac{1}{10 \cdot \left( \frac{5}{3}\right)^{k}}.
\end{align*}
Therefore,
\begin{align*}
\| y \|_{\mathcal{L}^{10 \cdot \left( \frac{5}{3} \right)^{k} }}^{10 \cdot \left( \frac{5}{3} \right)^{k}} &\leq C^{10 \cdot \left( \frac{5}{3} \right)^{k}}_{2,0,2 \cdot \left( \frac{5}{3} \right)^{k},6,10 \cdot \left( \frac{5}{3} \right)^{k}} \int_{0}^{T} \left( \| y \|_{2,2 \cdot \left( \frac{5}{3} \right)^{k}}^{1/5} \| y \|_{\mathcal{L}^{6}}^{4/5} \right)^{10 \cdot \left( \frac{5}{3} \right)^{k}} \diff s \\
	&\leq C \int_{0}^{T} \| y \|_{2,2 \cdot \left( \frac{5}{3} \right)^{k}}^{2 \cdot \left( \frac{5}{3} \right)^{k}} \| y \|_{\mathcal{H}^{1}}^{8 \cdot \left( \frac{5}{3} \right)^{k}} \diff s \\
	&\leq C \sup_{t \in [0,T]} \| y(t) \|_{\mathcal{H}^{1}}^{8 \cdot \left( \frac{5}{3} \right)^{k}} \int_{0}^{T} \| y \|_{2,2 \cdot \left( \frac{5}{3} \right)^{k}}^{2 \cdot \left( \frac{5}{3} \right)^{k}} \diff s < \infty,
\end{align*}
which implies
\begin{align*}
	g_{N}(|y|^{2})y \in \mathcal{L}^{2 \cdot \left( \frac{5}{3} \right)^{k+1}}(S_{T})
\end{align*}
and by another application of Lemma \ref{AppA_thm_keylemma}, this yields
\begin{align*}
	D_{x}^{\alpha} D_{t}^{j} f_{N} \in L^{2 \cdot \left( \frac{5}{3} \right)^{k+1}}(S_{T}), \quad | \alpha| + 2j \leq 2,
\end{align*}
and hence, by Theorem \ref{AppA_thm_reg},
\begin{align*}
	D_{x}^{\alpha} D_{t}^{j} y \in L^{2 \cdot \left( \frac{5}{3} \right)^{k+1}}(S_{T}), \quad | \alpha| + 2j \leq 2.
\end{align*}
We have thus shown that 
\begin{align*}
	D_{x}^{\alpha} D_{t}^{j} f_{N} \in \bigcap_{q > 1} L^{q}(S_{T}), \quad | \alpha| + 2j \leq 2.
\end{align*}
The next step of the proof consists of another induction on the number of derivatives. Namely we want to show that
\begin{align*}
	D_{x}^{\alpha} D_{t}^{j} f_{N} \in \bigcap_{q > 1} L^{q}(S_{T}), \quad | \alpha| + 2j \leq m.
\end{align*}
We have shown the base case $m=2$ already. So we are left to show the induction step $m \rightarrow m+1$.

There are two cases to consider:
\begin{enumerate}[label=(\alph*), ref=(\alph*)]
	\item There is at least one spatial derivative, i.e., we have
	\begin{align*}
		D_{x}^{\alpha} D_{t}^{j} f_{N} = D_{x_{k}} D_{x}^{\beta} D_{t}^{j} f_{N}, \quad |\beta| = |\alpha| - 1 > 0, \quad |\beta| + 1 + 2j = m+1.
	\end{align*}
	In this case we have
	\begin{align*}
		\| D_{x_{k}} D_{x}^{\beta} D_{t}^{j} f_{N} \|_{L^{q}(S_{T})} = \left\|  D_{x_{k}} D_{x}^{\beta} D_{t}^{j} \left( \mathcal{F}_{t} y_{0} - \int_{0}^{t} \mathcal{F}_{t-s} \left( \mathcal{P} (g_{N}(|y(s)|^{2})y(s) - f(s)) \right)(x) \diff s \right) \right\|_{L^{q}}.
	\end{align*}
	Applying linearity and the triangle inequality, we see that the term containing the initial condition $y_{0}$ is bounded. For the other term we get the upper bound
	\begin{align*}
		&\left\|  D_{x_{k}} D_{x}^{\beta} D_{t}^{j} \int_{0}^{t} \mathcal{F}_{t-s} \left( \mathcal{P} (g_{N}(|y(s)|^{2})y(s) - f(s)) \right) \diff s \right\|_{L^{q}} \\
		&\leq\left\|  D_{t}^{j-1} D_{x_{k}} D_{x}^{\beta} \mathcal{P} (g_{N}(|y(t)|^{2})y(t) - f(t)) \right\|_{L^{q}} \\ 
		&+ \left\| \int_{0}^{t} \left( D_{x_{k}} \mathcal{F}_{t-s} \right)  D_{x}^{\beta} D_{t}^{j} \left( \mathcal{P} (g_{N}(|y(s)|^{2})y(s) - f(s)) \right) \diff s \right\|_{L^{q}}.
	\end{align*}
	The first term is bounded by the induction hypothesis, since 
	$$ |\beta| + 1 + 2(j-1) = |\alpha| -1 + 2j = m.$$
	The second term is bounded by Young's convolution inequality and the fact that $D_{x_{k}} \mathcal{F}_{t} \in L^{1}(S_{T})$ just like in the proof of Lemma \ref{AppA_thm_Bwelldef}.
	\item There are only derivatives with respect to time, i.e.,
	\begin{align*}
			D_{x}^{\alpha} D_{t}^{j} f_{N} = D_{t}^{j} f_{N}, \quad 2j = m+1.
	\end{align*}
	The term containing the initial condition is again not a problem. In a similar way as before we find
	\begin{align*}
		&\left\|  D_{t}^{j} \int_{0}^{t} \mathcal{F}_{t-s} \left( \mathcal{P} (g_{N}(|y(s)|^{2})y(s) - f(s)) \right) \diff s \right\|_{L^{q}} \\
		&\leq \left\|  D_{t}^{j-1} \mathcal{P} (g_{N}(|y(t)|^{2})y(t) - f(t)) \right\|_{L^{q}} \\ 
		&+ \left\| \int_{0}^{t} \left( \Delta \mathcal{F}_{t-s} \right) D_{t}^{j-1} \left( \mathcal{P} (g_{N}(|y(s)|^{2})y(s) - f(s)) \right) \diff s \right\|_{L^{q}},
	\end{align*}
	where we have used that $\mathcal{F}_{t}$ solves the heat equation. Now by using integration by parts, in the last term, we transfer one spatial derivative from the Laplacian to the second factor and since $2j - 1 = m$, we conclude the boundedness as before by Young's convolution inequality and the $L^{1}$-boundedness of $D_{x_{k}} \mathcal{F}_{t-s}$.
\end{enumerate}
By the Sobolev embedding theorem we now find that $y$ is smooth. Thus we get for every $t \geq 0$ that
\begin{equation}\label{exuniq_eq_eqn_all_t}
	\begin{split}
	\partial_{t} \bm{v}(t) &= \Delta \bm{v}(t) - \mathcal{P}((\bm{v} \cdot \nabla ) \bm{v}) + \mathcal{P}((\bm{B} \cdot \nabla ) \bm{B}) - \mathcal{P}( g_{N}(| \bm{v}|^{2} ) \bm{v}) + \bm{f}_{v}(t) \\
		\partial_{t} \bm{B}(t) &= \Delta \bm{B}(t) - \mathcal{P}((\bm{v} \cdot \nabla ) \bm{B}) + \mathcal{P}((\bm{B} \cdot \nabla ) \bm{v}) - \mathcal{P}( g_{N}(| \bm{B}|^{2} ) \bm{B}) + \bm{f}_{B}(t)
	\end{split}
\end{equation}
We take this equation and apply 3 different inner products to both sides of the equations:
\begin{enumerate}[label=(\alph*), ref=(\alph*)]
	\item $\langle \cdot, \partial_{t} y(t) \rangle_{\mathcal{H}^{0}}$, which will lead to an estimate for $\int_{0}^{T} \| \partial_{t} y \|_{\mathcal{H}^{0}}^{2} \diff t$
	\item First apply $\partial_{t}$, then apply $\langle \cdot, \partial_{t} y(t) \rangle_{\mathcal{H}^{0}}$. This will lead to an estimate for $\| \partial_{t} y \|_{\mathcal{H}^{0}}^{2}$.
	\item $\langle \cdot, y(t) \rangle_{\mathcal{H}^{1}}$, which gives an estimate for $ \|  y(t) \|_{\mathcal{H}^{2}}^{2}$.
\end{enumerate}
\begin{enumerate}[label=(\alph*), ref=(\alph*)]
	\item We find by using Young's inequality
	\begin{align*}
		\| \partial_{t} y \|_{\mathcal{H}^{0}}^{2} &= - \langle \nabla y, \partial_{t} \nabla y(t) \rangle_{\mathcal{H}^{0}} - \langle (\bm{v} \cdot \nabla ) \bm{v}, \partial_{t} \bm{v} \rangle_{L^{2}} + \langle (\bm{B} \cdot \nabla ) \bm{B}, \partial_{t} \bm{v} \rangle_{L^{2}} \\
		&\quad - \langle (\bm{v} \cdot \nabla ) \bm{B}, \partial_{t} \bm{B} \rangle_{L^{2}} + \langle (\bm{B} \cdot \nabla ) \bm{v}, \partial_{t} \bm{B} \rangle_{L^{2}} - \int g_{N}(|y|^{2}) \frac{1}{2} \partial_{t} |y|^{2} \diff x  + \langle f, \partial_{t} y \rangle_{\mathcal{H}^{0}} \\
		&= - \frac{1}{2} \frac{\diff}{\diff t} \| \nabla y \|_{\mathcal{H}^{0}} - \langle (\bm{v} \cdot \nabla ) \bm{v}, \partial_{t} \bm{v} \rangle_{L^{2}} + \langle (\bm{B} \cdot \nabla ) \bm{B}, \partial_{t} \bm{v} \rangle_{L^{2}} \\
		&\quad - \langle (\bm{v} \cdot \nabla ) \bm{B}, \partial_{t} \bm{B} \rangle_{L^{2}} + \langle (\bm{B} \cdot \nabla ) \bm{v}, \partial_{t} \bm{B} \rangle_{L^{2}} - \frac{1}{2} \frac{\diff }{\diff t} \| G_{N}(|y|^{2}) \|_{L^{1}} + \langle f, \partial_{t} y \rangle_{\mathcal{H}^{0}} \\
		&\leq  - \frac{1}{2} \frac{\diff }{\diff t} \| \nabla y \|_{\mathcal{H}^{0}} + \frac{1}{8} \left( 2 \| \partial_{t} \bm{v}(t) \|_{\mathcal{H}^{0}}^{2} + 2 \| \partial_{t} \bm{B}(t) \|_{\mathcal{H}^{0}}^{2} \right) \\
		&\quad + 2 \left( \| | \bm{v} | | \nabla \bm{v} | \|_{L^{2}}^{2} + \| | \bm{B} | | \nabla \bm{B} | \|_{L^{2}}^{2} + \| | \bm{v} | | \nabla \bm{B} | \|_{L^{2}}^{2} + \| | \bm{B} | | \nabla \bm{v} | \|_{L^{2}}^{2} \right) \\
		&\quad + \frac{1}{4} \| \partial_{t} y(t) \|_{\mathcal{H}^{0}}^{2} + \| f(t) \|_{\mathcal{H}^{0}}^{2} - \frac{1}{2} \frac{\diff }{\diff t} \| G_{N}(|y|^{2}) \|_{L^{1}} \\
		&= - \frac{1}{2} \frac{\diff }{\diff t} \| \nabla y \|_{\mathcal{H}^{0}} + \frac{1}{2} \| \partial_{t} y(t) \|_{\mathcal{H}^{0}}^{2} + 2 \| |y| \cdot | \nabla y| \|_{L^{2}}^{2} + \| f(t) \|_{\mathcal{H}^{0}}^{2} - \frac{1}{2} \frac{\diff }{\diff t} \| G_{N}(|y|^{2}) \|_{L^{1}}.
	\end{align*}
	Here, we denote by $G_{N}$ a primitive function of $g_{N}$. Since $g_{N}(r) \leq 2 r$, we find $0 \leq G_{N}(r) := \int_{0}^{r} g_{N}(s) \diff s \leq 2 \frac{r^{2}}{2} = r^{2}$.
	Integrating from 0 to $T$ yields -- estimating the nonpositive terms by zero -- the following:
	\begin{equation}\label{exuniq_eq_ineq_1}
		\begin{split}
		&\frac{1}{2} \int_{0}^{T} \| \partial_{t} y(t) \|_{\mathcal{H}^{0}}^{2} \diff t \leq \frac{1}{2} \| \nabla y(0) \|_{\mathcal{H}^{0}}^{2} + \int_{0}^{T} \left( 2 \| |y| \cdot | \nabla y| \|_{L^{2}}^{2} + \| f \|_{\mathcal{H}^{0}}^{2}  \right) \diff t  + \frac{1}{2} \| G_{N}(|y(0)|^{2}) \|_{L^{1}} \\
		&\leq \frac{1}{2} \| \nabla y(0) \|_{\mathcal{H}^{0}}^{2} + 2 T C_{T,N,y_{0},f}^{(1)} + \int_{0}^{T} \| f \|_{\mathcal{H}^{0}}^{2} \diff t + \frac{1}{2} \| y_{0} \|_{\mathcal{L}^{4}}^{4} =: C_{T,N,y_{0},f}^{(2)}.
		\end{split}
	\end{equation}
	\item We first differentiate Equation \eqref{exuniq_eq_eqn_all_t} with respect to $t$ and then take the inner product with $\partial_{t} y$ in $\mathcal{H}^{0}$. Note that for $\bm{\theta}, \bm{\phi},\bm{\psi} \in \{ \bm{v}, \bm{B} \}$, we get
	\begin{equation} \label{exuniq_eq_time_deriv_NL}
		\langle \partial_{t} ( (\bm{\theta} \cdot \nabla) \bm{\phi} ), \partial_{t} \bm{\psi} \rangle_{L^{2}} = \langle ( \partial_{t} \bm{\theta} \cdot \nabla) \bm{\phi} , \partial_{t} \bm{\psi} \rangle_{L^{2}} + \langle  (\bm{\theta} \cdot \nabla) \partial_{t} \bm{\phi} , \partial_{t} \bm{\psi} \rangle_{L^{2}}.
	\end{equation}
	By the (anti-)symmetry of the nonlinear terms, if $\bm{\phi} = \bm{\psi}$, the second term vanishes, which accounts for the $(\bm{v} \cdot \nabla) \bm{v}$ and $(\bm{B} \cdot \nabla) \bm{B}$ terms. 	
	The other two nonlinear terms cancel each other, so we are left with four variations of the first term of the right-hand side of Equation \eqref{exuniq_eq_time_deriv_NL}, which can be simplified further using the divergence-freeness to yield
	\begin{align*}
		\langle ( \partial_{t} \bm{\theta} \cdot \nabla) \bm{\phi} , \partial_{t} \bm{\psi} \rangle_{L^{2}} =  \langle \nabla \cdot ( \partial_{t} \bm{\theta} \otimes \bm{\phi} ) , \partial_{t} \bm{\psi} \rangle_{L^{2}} = - \langle  \partial_{t} \bm{\theta} \otimes \bm{\phi} , \partial_{t} \nabla \bm{\psi} \rangle_{L^{2}}.
	\end{align*}
	Taking this into account and applying Young's inequality, we find
	\begin{align*}
		&\frac{1}{2} \frac{\diff }{\diff t} \| \partial_{t} y(t) \|_{\mathcal{H}^{0}}^{2} \\
		&= - \| \partial_{t} \nabla y(t)\|_{\mathcal{H}^{0}}^{2} + \langle \partial_{t} f, \partial_{t} y \rangle_{\mathcal{H}^{0}} \\
		&\quad+ \langle \partial_{t} \bm{v} \otimes \bm{v}, \partial_{t} \nabla \bm{v} \rangle_{L^{2}} - \langle \partial_{t} \bm{B} \otimes \bm{B}, \partial_{t} \nabla \bm{v} \rangle_{L^{2}} \\
		&\quad+ \langle \partial_{t} \bm{v} \otimes \bm{B}, \partial_{t} \nabla \bm{B} \rangle_{L^{2}}  - \langle \partial_{t} \bm{B} \otimes \bm{v}, \partial_{t} \nabla \bm{B} \rangle_{L^{2}}  \\
		&\quad- \| \sqrt{g_{N}(|y|^{2})} ~ |\partial_{t} y|~ \|_{L^{2}}^{2} - \| \sqrt{g_{N}'(|y|^{2}) } ~ | \partial_{t} |y|^{2} | ~ \|_{L^{2}}^{2} \\
		&\leq - \| \partial_{t} \nabla y(t)\|_{\mathcal{H}^{0}}^{2} + \frac{1}{4} \| \partial_{t} f \|_{\mathcal{H}^{0}}^{2} + \|\partial_{t} y \|_{\mathcal{H}^{0}}^{2} \\
		&\quad+ \| | \bm{v} | | \partial_{t} \bm{v} | \|_{L^{2}}^{2} + \| | \bm{B} | | \partial_{t} \bm{B} | \|_{L^{2}}^{2}  + \| | \bm{v} | | \partial_{t} \bm{B} | \|_{L^{2}}^{2} + \| | \bm{B} | | \partial_{t} \bm{v} | \|_{L^{2}}^{2} \\
		&\quad+ \frac{1}{4} \left( 2 \| \partial_{t}\nabla \bm{v} \|_{\mathcal{H}^{0}}^{2} + 2 \| \partial_{t}\nabla \bm{B} \|_{\mathcal{H}^{0}}^{2} \right) - \| \sqrt{g_{N}(|y|^{2})} ~ |\partial_{t} y|~ \|_{L^{2}}^{2} \\
		&\leq - \frac{1}{2} \| \partial_{t} \nabla y(t)\|_{\mathcal{H}^{0}}^{2} - \| |y| |\partial_{t} y| \|_{L^{2}}^{2} + \frac{1}{4} \| \partial_{t} f \|_{\mathcal{H}^{0}}^{2} + 2 (N+1) \|\partial_{t} y \|_{\mathcal{H}^{0}}^{2}.
	\end{align*}
	Integrating from 0 to $t \leq T$ then gives (again estimating non-positive terms by zero)
	\begin{equation}\label{exuniq_eq_ineq_2}
		\begin{split}
		&\| \partial_{t} y(t) \|_{\mathcal{H}^{0}}^{2} \leq \| \partial_{t} y(0) \|_{\mathcal{H}^{0}}^{2} + 4 (N+1) \int_{0}^{T} \| \partial_{s} y(s) \|_{\mathcal{H}^{0}}^{2} \diff s + \frac{1}{2} \int_{0}^{T} \| \partial_{s} f(s) \|_{\mathcal{H}^{0}}^{2} \diff s \\
		&\leq C(1+ \| y_{0} \|_{\mathcal{H}^{2}}^{6} + \| f(0) \|_{\mathcal{H}^{0}}^{2} ) + 8 (N+1)C_{T,N,y_{0},f}^{(2)} + \frac{1}{2} \int_{0}^{T} \| \partial_{s} f \|_{\mathcal{H}^{0}}^{2} \diff s =: C_{T,N,y_{0},f}^{(3)}.
		\end{split}
	\end{equation}
	Here we used \eqref{exuniq_eq_ineq_1} as well as the following estimate for the time derivative of the initial condition: since \eqref{exuniq_eq_eqn_all_t} holds for all $t$, we can set $t=0$ there to and take the $\mathcal{H}^{0}$-norm to find
	\begin{align*}
		\| \partial_{t} y_{0} \|_{\mathcal{H}^{0}}^{2} &\leq C \left( \| y_{0} \|_{\mathcal{H}^{2}}^{2} + \| |y_{0}| \|_{\mathcal{H}^{0}}^{2} \| |\nabla y_{0}| \|_{\mathcal{H}^{0}}^{2} + \| y_{0} \|_{\mathcal{H}^{2}}^{6} + \| f(0) \|_{\mathcal{H}^{0}}^{2} \right) \\
		&\leq C \left( \| y_{0} \|_{\mathcal{H}^{2}}^{2} + \| |y_{0}| \|_{\mathcal{H}^{0}}^{4} + \| |\nabla y_{0}| \|_{\mathcal{H}^{0}}^{4} + \| y_{0} \|_{\mathcal{H}^{2}}^{6} + \| f(0) \|_{\mathcal{H}^{0}}^{2} \right) \\
		&\leq C \left( 1 + \| y_{0} \|_{\mathcal{H}^{2}}^{6} + \| f(0) \|_{\mathcal{H}^{0}}^{2} \right).
	\end{align*}
	\item Finally, we take the $\mathcal{H}^{1}$ inner product with $y(t)$ and use Equation  \eqref{ar_eq_main_estimate}:
	\begin{align*}
		\langle \partial_{t} y(t), y(t) \rangle_{\mathcal{H}^{1}} 
		&\leq - \frac{1}{2} \| y \|_{\mathcal{H}^{2}}^{2} + \| y \|_{\mathcal{H}^{0}}^{2} + 2(N+1)\| \nabla y \|_{\mathcal{H}^{0}}^{2} - \| | \bm{v} | | \nabla \bm{v} | \|_{\bm{L}^{2}}^{2}  - \| | \bm{B} | | \nabla \bm{B} | \|_{\bm{L}^{2}}^{2} \\
		&\quad - \| | \bm{v}| | \nabla \bm{B} | \|_{\bm{L}^{2}}^{2} - \| | \bm{B} | | \nabla \bm{v} | \|_{\bm{L}^{2}}^{2} + \frac{1}{4} \| y \|_{\mathcal{H}^{2}}^{2} + \| f \|_{\mathcal{H}^{0}}^{2},
	\end{align*}
	which implies
	\begin{align*}
		&\| y(t) \|_{\mathcal{H}^{2}}^{2} \leq 4 \| y(t) \|_{\mathcal{H}^{0}}^{2} + 8(N+1)\| \nabla y(t) \|_{\mathcal{H}^{0}}^{2} + 4 \| f(t) \|_{\mathcal{H}^{0}}^{2} + 8 \| \partial_{t} y(t) \|_{\mathcal{H}^{0}}^{2} +  \frac{1}{2} \| y(t) \|_{\mathcal{H}^{2}}^{2},
	\end{align*}
	and hence, using \eqref{exuniq_eq_ineq_2} and \eqref{exuniq_eq_ssoln_H0est},
	\begin{align*}
		\sup_{t \in [0,T]} \| y(t) \|_{\mathcal{H}^{2}}^{2} &\leq 16(N+1) \sup_{t \in [0,T]} \| \nabla y(t) \|_{\mathcal{H}^{0}}^{2} + 8 \sup_{t \in [0,T]} \| y(t) \|_{\mathcal{H}^{0}}^{2} \\
		&+ 8 \sup_{t \in [0,T]} \| f(t) \|_{\mathcal{H}^{0}}^{2} + 16 \sup_{t \in [0,T]} \| \partial_{t} y(t) \|_{\mathcal{H}^{0}}^{2} \\
		&\leq C(N+1) C_{T,N,y_{0},f}^{(1)} + 4 \left[ \| y_{0} \|_{\mathcal{H}^{0}} + \int_{0}^{T} \| f \|_{\mathcal{H}^{0}} \diff s \right]^{2} \\
		&+ 8 \sup_{t \in [0,T]} \| f(t) \|_{\mathcal{H}^{0}}^{2} + 16 C_{T,N,y_{0},f}^{(3)} =: C_{T,N,y_{0},f}' + C_{T,N,y_{0},f}(1 + N^{2}),
	\end{align*}
	i.e., \eqref{exuniq_eq_ssoln_H2est}. Equation \eqref{exuniq_eq_ssoln_H0est} follows from \eqref{exuniq_eq_soln_H1dot_int_est}, and Equation \eqref{exuniq_eq_ssoln_H1est} follows from \eqref{exuniq_eq_soln_H1est}. This concludes the proof.
\end{enumerate}
\end{proof}

\subsection{Convergence to the Untamed MHD Equations}\label{sec:DTMHD_conv}
In this section we stress the dependence of the solution to the tamed equation on $N$ by writing $y_{N}$. We will prove that as $N \rightarrow \infty$, the solutions to the tamed equations converge to weak solutions of the untamed equations, i.e., Theorem \ref{DTMHD_thm_conv}. 
\begin{proof}
	The proof follows along the same lines as that of Theorem 1.2 in \cite{RZ09a}. 
	
	Let $y_{0}^{N} \in \mathcal{H}^{1}$ with $y_{0}^{N} \rightarrow y_{0}$ in $\mathcal{H}^{0}$ and $(y_{N},p_{N})$ be the associated unique strong solution given by Theorem \ref{DTMHD_thm_intro_GWP}. Combining \eqref{exuniq_eq_yH0_est} with \eqref{exuniq_eq_yH1tamed_est} yields
	\begin{equation} \label{conv_eq_H0H1est}
		\sup_{t \in [0,T]} \| y_{N}(t) \|_{\mathcal{H}^{0}}^{2} + \int_{0}^{T} \| y_{N} \|_{\mathcal{H}^{1}}^{2} + \| \sqrt{g_{N}(|y|^{2})}|y| \|_{L^{2}}^{2} \diff s \leq C_{y_{0},f,T}.
	\end{equation}
	For $q \in [2, \infty)$, $r \in (2,6]$ such that
	$$
		\frac{3}{r} + \frac{2}{q} = \frac{3}{2},
	$$
	by an application of the Sobolev embedding \eqref{ar_eq_interpolation_6D} and \eqref{conv_eq_H0H1est} we find
	\begin{equation} \label{conv_eq_Sobolev}
		\int_{0}^{T} \| y_{N} \|_{\mathcal{L}^{r}}^{q} \diff t \leq C_{1,0,2,2,r}^{q} \int_{0}^{T}  \| y_{N} \|_{\mathcal{H}^{1}}^{2}  \| y_{N} \|_{\mathcal{H}^{0}}^{q-2} \diff t \leq C_{y_{0},f,T,r,q}.
	\end{equation}
	Employing the Arzel\`{a}-Ascoli theorem and the Helmholtz-Weyl decomposition in the same way as in the proof of Theorem \ref{DTMHD_thm_intro_GWP}, we find a subsequence $y_{N_{k}}$ (again denoted by $y_{N}$) and a $y = \begin{pmatrix} \bm{v} , \bm{B} \end{pmatrix}	 \in L^{\infty}([0,T];\mathcal{H}^{0}) \cap L^{2}([0,T];\mathcal{H}^{1})$ such that for all $\tilde{y} = \begin{pmatrix} \tilde{\bm{v}} , \tilde{\bm{B}} \end{pmatrix} \in \mathcal{L}^{2}$
	\begin{equation}\label{conv_eq_L2conv}
		\lim_{N \rightarrow \infty} \sup_{t \in [0,T]} | \langle y_{N}(t) - y(t), \tilde{y} \rangle_{\mathcal{L}^{2}} | = 0.
	\end{equation}
	In fact, we can even prove that for every bounded open set $G \subset \mathbb{R}^{3}$, we have
	\begin{equation}\label{conv_eq_Friedrichs}
		\lim_{N \rightarrow \infty} \int_{0}^{T} \int_{G} | y_{N}(t,x) - y(t,x) |^{2} \diff x \diff t = 0.
	\end{equation}
	To this end, let $G \subset \bar{G} \subset Q$ for a cuboid $Q$, and $\rho$ be a smooth, non-negative cutoff function with $\rho \equiv 1$ on $G$, $\rho \equiv 0$ on $\mathbb{R}^{3} \backslash Q$, and by Friedrichs' inequality \eqref{DTMHD_exuniq_eq_Friedrichs}
	\begin{align*}
		&\int_{0}^{T} \int_{G} | y_{N}(t,x) - y(t,x) |^{2} \diff x \diff t \\
		&\leq \int_{0}^{T} \int_{Q} | \bm{v}_{N}(t,x) - \bm{v}(t,x) |^{2} \rho^{2}(x) \diff x \diff t + \int_{0}^{T} \int_{Q} | \bm{B}_{N}(t,x) - \bm{B}(t,x) |^{2} \rho^{2}(x) \diff x \diff t \\ 
		&\leq \sum_{i=1}^{K_{\varepsilon}} \int_{0}^{T}  \left( \int_{Q} (\bm{v}_{N}(t,x) - \bm{v}(t,x)) \rho(x) h_{i}^{\varepsilon}(x) \diff x \right)^{2} \diff t \\
		&+ \sum_{i=1}^{K_{\varepsilon}} \int_{0}^{T}  \left( \int_{Q} (\bm{B}_{N}(t,x) - \bm{B}(t,x)) \rho(x) h_{i}^{\varepsilon}(x) \diff x \right)^{2} \diff t \\
		&+ \varepsilon \int_{0}^{T} \int_{Q} | \nabla \left( (\bm{v}_{N} - \bm{v} ) \rho \right)(x) |^{2} \diff x \diff t + \varepsilon \int_{0}^{T} \int_{Q} | \nabla \left( (\bm{B}_{N} - \bm{B} ) \rho \right)(x) |^{2} \diff x \diff t \\
		&=: I_{1}(N,\varepsilon) + I_{2}(N,\varepsilon) + I_{3}(N,\varepsilon) + I_{4}(N,\varepsilon).
	\end{align*}
	The terms $I_{1}(N,\varepsilon)$, $I_{2}(N,\varepsilon)$ vanish for $N \rightarrow \infty$ as using \eqref{conv_eq_L2conv} we get
	\begin{align*}
		\lim_{N \rightarrow \infty} I_{1}(N,\varepsilon) \leq T \sum_{i=1}^{K_{\varepsilon}} \lim_{N \rightarrow \infty} \sup_{t \in [0,T]} \left| \int_{\mathbb{R}^{3}} (\bm{v}_{N}(t,x) - \bm{v}(t,x)) \rho(x) h_{i}^{\varepsilon}(x) 1_{Q}(x) \diff x  \right|^{2} = 0,
	\end{align*}
	and an analogous computation yields $\lim_{N \rightarrow \infty} I_{2}(N,\varepsilon) = 0$.
	
	The other two terms can be bounded by
	\begin{align*}
		I_{3}(N,\varepsilon) + I_{4}(N,\varepsilon) \leq \varepsilon \cdot C_{\rho} \int_{0}^{T} \left( \| y_{N}(t) \|_{\mathcal{H}^{1}}^{2} + \| y(t) \|_{\mathcal{H}^{1}}^{2} \right) \diff t \leq C_{\rho, y_{0},T,f} \cdot \varepsilon,
	\end{align*}
	and the arbitrariness of $\varepsilon > 0$ implies the claim.
	
	Next we prove that for any $\tilde{y} \in \mathcal{V}$
	\begin{align*}
		\lim_{N \rightarrow \infty} \int_{0}^{t} \langle g_{N}(|y_{N}(s)|^{2}) y_{N}(s), \tilde{y} \rangle_{\mathcal{H}^{0}} \diff s = 0.
	\end{align*}
	This can be seen as follows:
	\begin{align*}
		&\lim_{N \rightarrow \infty} \int_{0}^{t} \langle g_{N}(|y_{N}(s)|^{2}) y_{N}(s), \tilde{y} \rangle_{\mathcal{H}^{0}} \diff s \\
		&\leq \| \tilde{y} \|_{L^{\infty}} \cdot \limsup_{N \rightarrow \infty} \int_{0}^{t} \int_{\mathbb{R}^{3}} | y_{N}(s)|^{3} \cdot 1_{ \{|y_{N}(s,x)|^{2} \geq N \}} \diff x \diff s \\
		&\leq \| \tilde{y} \|_{L^{\infty}} \cdot \limsup_{N \rightarrow \infty} \left( \int_{0}^{t} \| y_{N}(s)\|_{\mcL^{10/3}}^{10/3} \diff s \right)^{9/10} \cdot \left( \int_{0}^{t} \int_{\mathbb{R}^{3}} 1_{ \{|y_{N}(s,x)|^{2} \geq N \}} \diff x \diff s \right)^{1/10} \\
		&\leq C_{\tilde{y}, y_{0},T,f} \cdot \limsup_{N \rightarrow \infty} \left( \frac{1}{N} \int_{0}^{t} \|y_{N}(s) \|_{\mathcal{H}^{0}}^{2} \diff s  \right)^{1/10} = 0,
	\end{align*}
	where we have used \eqref{conv_eq_Sobolev} and Chebychev's inequality. 
	
	As in the proof of Theorem \ref{DTMHD_thm_intro_GWP}, there exist pressure functions $p_{N}, \pi_{N} \in L^{2}([0,T];L_{\text{loc}}^{2}(\mathbb{R}^{3}))$ such that $\nabla p_{N}, \nabla \pi_{N} \in L^{2}([0,T];L^{2}(\mathbb{R}^{3};\mathbb{R}^{3}))$, and we have for almost all $t \geq 0$ that
	\begin{align}\label{conv_eq_ptw_v}
		\frac{\partial \bm{v}_{N} }{\partial t} &=  \Delta \bm{v}_{N} - \left( \bm{v}_{N} \cdot \nabla \right) \bm{v}_{N}  +  \left( \bm{B}_{N} \cdot \nabla \right) \bm{B}_{N} + \nabla \left( p_{N} + \frac{| \bm{B}_{N} |^2}{2} \right) \\
		\nonumber &\quad - g_{N}(| (\bm{v}_{N}, \bm{B}_{N}) |^{2}) \bm{v}_{N} + \bm{f}_{1}\\
		\label{conv_eq_ptw_B}\frac{\partial \bm{B}_{N}}{\partial t}
		&= \Delta \bm{B}_{N} - \left( \bm{v}_{N} \cdot \nabla \right) \bm{B}_{N} + (\bm{B}_{N} \cdot \nabla) \bm{v}_{N} + \nabla \pi_{N} - g_{N}(| (\bm{v}_{N}, \bm{B}_{N}) |^{2}) \bm{B}_{N} + \bm{f}_{2}.
	\end{align}
	To derive the generalised energy inequality, we take a non-negative $\phi \in C_{0}^{\infty}((0,T)\times \mathbb{R}^{3})$ and then take the inner products with $2 y_{N} \phi$ in $\mathcal{H}^{0}$ of this equation. Let us use the abbreviation $\iint = \int_{0}^{T} \int_{\mathbb{R}^{3}} \diff x \diff t$. Then we get
	\begin{align}
		\label{conv_eq_EI_timeterms}&\iint \partial_{t} \bm{v}_{N} \cdot 2 \bm{v}_{N} \phi + \iint \partial_{t} \bm{B}_{N} \cdot 2 \bm{B}_{N} \phi \\
		\label{conv_eq_EI_Deltas}&= \iint \Delta \bm{v}_{N} \cdot 2 \bm{v}_{N} \phi + \iint \Delta \bm{B}_{N} \cdot 2 \bm{B}_{N} \phi \\
		\label{conv_eq_EI_NL1}&\quad - \iint \left( \bm{v}_{N} \cdot \nabla \right) \bm{v}_{N} \cdot 2 \bm{v}_{N} \phi +  \iint \left( \bm{B}_{N} \cdot \nabla \right) \bm{B}_{N} \cdot 2 \bm{v}_{N} \phi  \\
		\label{conv_eq_EI_NL2}&\quad - \iint \left( \bm{v}_{N} \cdot \nabla \right) \bm{B}_{N} \cdot 2 \bm{B}_{N} \phi + \iint \left( \bm{B}_{N} \cdot \nabla \right) \bm{v}_{N} \cdot 2 \bm{B}_{N} \phi \\
		\label{conv_eq_EI_taming}&\quad - 2 \iint g_{N}(|y_{N}|^{2})|\bm{v}_{N}|^{2} \phi - 2 \iint g_{N}(|y_{N}|^{2})|\bm{B}_{N}|^{2} \phi \\
		\label{conv_eq_EI_pf}&\quad + 2 \iint \nabla p_{N} \cdot \bm{v}_{N} \phi + 2 \iint \nabla \pi_{N} \cdot \bm{B}_{N} \phi + 2 \iint \langle f,y_{N}\rangle \phi.
	\end{align}
	Let us discuss this equation line by line. The first line \eqref{conv_eq_EI_timeterms} is a simple application of integration by parts (with respect to the time variable):
	\begin{align*}
		\iint \partial_{t} y_{N} \cdot y_{N} \phi &= - \iint  y_{N} \cdot \partial_{t} \left( y_{N} \phi \right) = - \iint  y_{N} \cdot \left(\partial_{t} y_{N} \right) \phi - \iint  |y_{N}|^{2} \partial_{t} \phi
	\end{align*}
	which in turn yields
	\begin{align*}
		2 \iint \partial_{t} y_{N} \cdot y_{N} \phi = - \iint  |y_{N}|^{2} \partial_{t} \phi.
	\end{align*}
	For the second line \eqref{conv_eq_EI_Deltas}, we proceed along similar lines, this time with respect to the space variable. To avoid confusion, we will write the equation in components:
	\begin{align*}
		&2 \iint \Delta \bm{v}_{N} \cdot  \bm{v}_{N} \phi + 2 \iint \Delta \bm{B}_{N} \cdot  \bm{B}_{N} \phi \\
		&= - 2 \iint \sum_{i,k} \left( \partial_{i} \bm{v}_{N}^{k} \right) \partial_{i} \left(  \bm{v}_{N}^{k} \phi \right) - 2 \iint \sum_{i,k} \left( \partial_{i} \bm{B}_{N}^{k} \right) \partial_{i} \left(  \bm{B}_{N}^{k} \phi \right).
	\end{align*}
	We will focus on the velocity terms, the magnetic field works in exactly the same way.
	\begin{align*}
		- 2 \iint \sum_{i,k} \left( \partial_{i} \bm{v}_{N}^{k} \right) \partial_{i} \left(  \bm{v}_{N}^{k} \phi \right) = - 2 \iint \sum_{i,k} \left| \partial_{i} \bm{v}_{N}^{k} \right|^{2} \phi - 2 \iint (\partial_{i} \bm{v}_{N}^{k}) \bm{v}_{N}^{k} \partial_{i} \phi.
	\end{align*}
	The last term on the right-hand side equals, after another application of integration by parts,
	\begin{align*}
		- 2 \iint (\partial_{i} \bm{v}_{N}^{k}) \bm{v}_{N}^{k} \partial_{i} \phi = 2 \iint \bm{v}_{N}^{k} \partial_{i} \left( \bm{v}_{N}^{k} \partial_{i} \phi \right) = 2 \iint \left( \partial_{i} \bm{v}_{N}^{k} \right) \bm{v}_{N}^{k}   \partial_{i} \phi  + 2 \iint \bm{v}_{N}^{k} \bm{v}_{N}^{k} \left(\partial_{i}^{2} \phi \right),
	\end{align*}
	and thus
	\begin{align*}
		- 2 \iint (\partial_{i} \bm{v}_{N}^{k}) \bm{v}_{N}^{k} \partial_{i} \phi = \iint |\bm{v}_{N}|^{2} \Delta \phi.
	\end{align*}
	Hence, \eqref{conv_eq_EI_Deltas} can be rewritten as
	\begin{align*}
		2 \iint \Delta \bm{y}_{N} \cdot \bm{y}_{N} \phi = -\iint | \nabla y_{N}|^{2} \phi + \iint |y_{N}|^{2} \Delta \phi.
	\end{align*}
	The third line, \eqref{conv_eq_EI_NL1} will be dealt with term by term. By the incompressibility condition
	\begin{align*}
		- 2 \iint \left( \bm{v}_{N} \cdot \nabla \right) \bm{v}_{N} \cdot \bm{v}_{N} \phi = 2 \iint  |\bm{v}_{N}|^{2} \nabla \cdot \left( \bm{v}_{N} \phi \right) 
		= 2 \iint  |\bm{v}_{N}|^{2}  \bm{v}_{N} \cdot \nabla \phi.
	\end{align*}
	The second term, in a similar fashion, becomes (again using the divergence-freeness)
	\begin{align*}
		&2 \iint \left( \bm{B}_{N} \cdot \nabla \right) \bm{B}_{N} \cdot \bm{v}_{N} \phi = - 2 \iint (\bm{B}_{N} \cdot \nabla)\bm{v}_{N} \cdot \bm{B}_{N} \phi - 2 \iint (\bm{B}_{N} \cdot \bm{v}_{N})(\bm{B}_{N} \cdot \nabla \phi).
	\end{align*}
	The first term of the last line here cancels with the second term of \eqref{conv_eq_EI_NL2}. Thus we only have to deal with the first term of \eqref{conv_eq_EI_NL2}:
	\begin{align*}
		&- 2 \iint \left( \bm{v}_{N} \cdot \nabla \right) \bm{B}_{N} \cdot  \bm{B}_{N} \phi = -2 \sum_{i,k} \iint  \bm{v}_{N}^{i} \left( \partial_{i} \bm{B}_{N}^{k} \right) \bm{B}_{N}^{k} \phi \\
		&= 2 \sum_{i,k} \iint \bm{B}_{N}^{k} \partial_{i}\left( \bm{v}_{N}^{i} \bm{B}_{N}^{k} \phi \right) = 2 \sum_{i,k} \iint  \bm{v}_{N}^{i}   \bm{B}_{N}^{k} \partial_{i}\left( \bm{B}_{N}^{k} \phi \right) \\
		&= 2 \sum_{i,k} \iint  \bm{v}_{N}^{i}   \bm{B}_{N}^{k} \left( \partial_{i} \bm{B}_{N}^{k} \right)\phi + 2 \sum_{i,k} \iint  \bm{v}_{N}^{i}   \bm{B}_{N}^{k} \bm{B}_{N}^{k} \partial_{i}\phi \\
		&= 2 \iint \left( \bm{v}_{N} \cdot \nabla \right)\bm{B}_{N} \cdot \bm{B}_{N} \phi + 2 \iint |\bm{B}_{N}|^{2} \bm{v}_{N} \cdot \nabla \phi,
	\end{align*}
	and therefore
	\begin{align*}
		- 2 \iint \left( \bm{v}_{N} \cdot \nabla \right) \bm{B}_{N} \cdot  \bm{B}_{N} \phi = \iint |\bm{B}_{N}|^{2} \bm{v}_{N} \cdot \nabla \phi.
	\end{align*}
	The last terms that we have to treat are the pressure terms of \eqref{conv_eq_EI_pf}. For the first term, we find, again by integration by parts and the incompressibility
	\begin{align*}
		2 \iint \nabla p_{N} \cdot \bm{v}_{N} \phi = - 2 \iint p_{N} \bm{v}_{N} \cdot \nabla \phi,
	\end{align*}
	and similarly for the second term
	\begin{align*}
		2 \iint \nabla \pi_{N} \cdot \bm{B}_{N} \phi = - 2 \iint \pi_{N} \bm{B}_{N} \cdot \nabla \phi.
	\end{align*}
	Thus, altgether we find that
	\begin{equation}\label{conv_eq_EE}
		\begin{split}
		&2 \int_{0}^{T} \int_{\mathbb{R}^{3}} | \nabla y_{N}|^{2} \phi ~\diff x \diff s + 2 \int_{0}^{T} \int_{\mathbb{R}^{3}} g_{N}(|y_{N}|^{2})|y_{N}|^{2} \phi ~\diff x \diff s\\
		&= \int_{0}^{T} \int_{\mathbb{R}^{3}} \Big[ |y_{N}|^{2} \left( \partial_{t} \phi + \Delta \phi \right)  + 2 \langle y_{N}, f \rangle \phi - 2 \pi_{N} \langle \bm{B}_{N}, \nabla \phi \rangle_{\R^{3}} \\
		&\quad + (|y_{N}|^{2} - 2 p_{N}) \langle \bm{v}_{N} , \nabla \phi \rangle_{\mathbb{R}^{3}} - 2 \langle \bm{B}_{N} , \bm{v}_{N} \rangle_{\mathbb{R}^{3}} \langle \bm{B}_{N}, \nabla \phi \rangle_{\mathbb{R}^{3}} \Big] \diff x \diff s.
		\end{split}
	\end{equation}
	Since $0 \leq \phi \in C_{c}^{\infty}$, it acts as a density, and thus, by \cite[Theorem 1.2.1]{Davies90}, the map $y \mapsto \int_{0}^{T} \int_{\mathbb{R}^{3}} |\nabla y |^{2} \phi~ \diff x \diff s$ is lower semi-continuous in $L^{2}([0,T];\mathcal{H}^{0})$. Thus, the limit of the left-hand side of \eqref{conv_eq_EE} as $N \rightarrow \infty$ is greater than or equal to
	\begin{align*}
		&\liminf_{N \rightarrow \infty} \int_{0}^{T} \int_{\mathbb{R}^{3}} | \nabla y_{N}|^{2} \phi ~\diff x \diff s  + 2 \int_{0}^{T} \int_{\mathbb{R}^{3}} g_{N}(|y_{N}|^{2})|y_{N}|^{2} \phi ~\diff x \diff s \\
		&\geq \liminf_{N \rightarrow \infty} \int_{0}^{T} \int_{\mathbb{R}^{3}} | \nabla y_{N}|^{2} \phi ~\diff x \diff s \geq 2 \int_{0}^{T} \int_{\mathbb{R}^{3}} | \nabla y|^{2} \phi ~\diff x \diff s.
	\end{align*}
	On the other hand, the limit of the right-hand side as $N \rightarrow \infty$ consists of four terms, which we treat individually. We denote $G := \supp \phi$. 
	
	For the first term, by Cauchy-Schwarz-Buniakowski, \eqref{conv_eq_H0H1est} and \eqref{conv_eq_Friedrichs}, we find
	\begin{align*}
		&\int_{0}^{T} \int_{\mathbb{R}^{3}} \left( |y_{N}|^{2} - |y|^{2} \right) \left( \partial_{t} \phi + \Delta \phi \right)\diff x \diff s \\
		&\leq \int_{0}^{T} \int_{G}  |y_{N} - y| (|y_{N}| + |y|) \left( \partial_{t} \phi + \Delta \phi \right)\diff x \diff s \\
		&\leq C_{\phi} \left( \int_{0}^{T} \int_{G}  |y_{N} - y|^{2} \diff x \diff s \right)^{1/2}  \left( \int_{0}^{T} \int_{G}  2( |y_{N}|^{2} + |y|^{2}) \diff x \diff s \right)^{1/2} \\
		&\leq C_{\phi, y_{0},T,f}\left( \int_{0}^{T} \int_{G}  |y_{N} - y|^{2} \diff x \diff s \right)^{1/2} \overset{N \rightarrow \infty}{\longrightarrow} 0.
	\end{align*}
	The second term can be treated in a similar fashion:
	\begin{align*}
		\int_{0}^{T} \int_{\mathbb{R}^{3}} 2 \langle y_{N} - y , f \rangle \phi ~\diff x \diff s &\leq 
		\left( \int_{0}^{T} \int_{G} |y_{N} - y|^{2} \diff x \diff s \right)^{1/2} \left( \int_{0}^{T} \int_{G} |f|^{2} \phi^{2} \diff x \diff s \right)^{1/2} \\
		&\leq C_{\phi} \| f \|_{L^{2}([0,T];\mathcal{H}^{0})} \left( \int_{0}^{T} \int_{G} |y_{N} - y|^{2} \diff x \diff s \right)^{1/2} \overset{N \rightarrow \infty}{\longrightarrow} 0.
	\end{align*}
	For the term 
	\begin{align*}
	\int_{0}^{T} \int_{\mathbb{R}^{3}} |y_{N}|^{2} \langle \bm{v}_{N} , \nabla \phi \rangle_{\mathbb{R}^{3}} \diff x \diff s = \int_{0}^{T} \int_{G} |y_{N}|^{2} \left\langle \bm{v}_{N} , 1_{G}\frac{\nabla \phi}{\phi} \right\rangle_{\mathbb{R}^{3}} \phi \diff x \diff s,
	\end{align*}  
	we note that since $\phi \in C_{c}^{\infty}((0,T)\times \mathbb{R}^{3})$, \eqref{conv_eq_Friedrichs} implies convergence in measure for the finite measure $\mu := \phi \diff x \otimes \diff s$. One can also prove uniform integrability with respect to this measure.
	Then by the generalised Lebesgue dominated convergence theorem we get
	\begin{align*}
		\int_{0}^{T} \int_{\mathbb{R}^{3}} |y_{N}|^{2} \langle \bm{v}_{N} , \nabla \phi \rangle_{\mathbb{R}^{3}} \diff x \diff s \overset{N \rightarrow \infty}{\longrightarrow} \int_{0}^{T} \int_{\mathbb{R}^{3}} |y|^{2} \langle \bm{v} , \nabla \phi \rangle_{\mathbb{R}^{3}} \diff x \diff s.
	\end{align*}
	Moving on with the energy inequality, the last term of \eqref{conv_eq_EE} can be treated in the same way as just discussed. We are left with the pressure term. As in \cite{RZ09a}, pp. 547 f., we take the divergence of \eqref{conv_eq_ptw_v} to find
	\begin{equation}\label{conv_eq_Poisson_p}
		\begin{split}
		\Delta p_{N} &= \divv \left( (\bm{v}_{N} \cdot \nabla)\bm{v}_{N} - (\bm{B}_{N} \cdot \nabla)\bm{B}_{N} - \nabla \frac{|\bm{B}_{N}|^{2}}{2} + g_{N}(|y_{N}|^{2})\bm{v}_{N}  \right) \\
		&= \divv \left( (\bm{v}_{N} \cdot \nabla)\bm{v}_{N} - (\bm{B}_{N} \cdot \nabla)\bm{B}_{N} - \bm{B}_{N} \cdot (\nabla \bm{B}_{N}) + g_{N}(|y_{N}|^{2})\bm{v}_{N}  \right).
		\end{split}
	\end{equation}
	Similarly, we take the divergence of \eqref{conv_eq_ptw_B} and obtain\footnote{Noting that $(\bm{v}_{N} \cdot \nabla)\bm{B}_{N} - (\bm{B}_{N} \cdot \nabla)\bm{v}_{N} = \nabla \times (\bm{v} \times \bm{B})$, which is divergence-free, cf. Section \ref{DTMHD_sssec_MPP}.}
	\begin{equation}\label{conv_eq_Poisson_pi}
		\Delta \pi_{N} = \divv \left( g_{N}(|y_{N}|^{2})\bm{B}_{N} \right).
	\end{equation}
	We note that for $N$ sufficiently large
	$$
		(g_{N}(r))^{9/8} \cdot r^{9/16} \leq C g_{N}(r) \cdot r.
	$$
	This is obviously true on the set $\{ r ~|~ g_{N}(r) = 0 \}$. If $r > 0$ is such that $g_{N}(r) > 0$ (which implies $r \geq 1$), we have
	\begin{align*}
		&(g_{N}(r))^{9/8} \cdot r^{9/16} \leq g_{N}(r) (g_{N}(r))^{1/8} \cdot r^{9/16} \\
		&\leq g_{N}(r) 2^{1/8} r^{1/8} \cdot r^{9/16} \leq g_{N}(r) 2^{1/8} \cdot r^{11/16} \leq 2^{1/8} g_{N}(r) \cdot r,
	\end{align*}
where the factor of $2$ appears due to the definition of the taming function. Using this inequality and \eqref{conv_eq_H0H1est}, we find
\begin{equation} \label{conv_eq_aux_est_taming}
	\int_{0}^{T} \int_{\mathbb{R}^{3}} |g_{N}(|y|^{2}) y |^{9/8} \diff x \diff t \leq C \int_{0}^{T} \int_{\mathbb{R}^{3}} g_{N}(|y|^{2}) \cdot |y|^{2} \diff x \diff t \leq C_{T,y_{0},f}.
\end{equation}
For the first three nonlinear terms on the right-hand side of \eqref{conv_eq_Poisson_p}, we note that by H\"older's inequality (first for the product measure $\diff x \otimes \diff t$ and with $p=16/7$, $q=16/9$, then for $\diff t$ with $p=14/6$, $q = 14/8$) and the Sobolev embedding \eqref{ar_eq_interpolation_6D} we have
\begin{equation}\label{conv_eq_aux_est_NL}
	\begin{split}
		&\int_{0}^{T} \int_{\mathbb{R}^{3}} |(\bm{v}_{N} \cdot \nabla ) \bm{v}_{N}|^{9/8} \diff x \diff t \leq \int_{0}^{T} \int_{\mathbb{R}^{3}} |\bm{v}_{N}|^{9/8} |\nabla  \bm{v}_{N}|^{9/8} \diff x \diff t \\
		&\leq \left( \int_{0}^{T} \| \bm{v}_{N} \|_{L^{18/7}}^{18/7} \right)^{\frac{7}{16}} \cdot \left( \int_{0}^{T} \| \bm{v}_{N} \|_{\mathbb{H}^{1}}^{2} \diff t \right)^{\frac{9}{16}} \\
		&\leq \left( \int_{0}^{T} \| y_{N} \|_{\mathcal{L}^{18/7}}^{18/7} \diff t \right)^{\frac{7}{16}} \cdot \left( \int_{0}^{T} \| y_{N} \|_{\mathcal{H}^{1}}^{2} \diff t \right)^{\frac{9}{16}} \\
		&\leq C_{y_{0},T,f} \left( C_{2,0,2,2,18/7}^{18/7} \int_{0}^{T} \left ( \| y_{N} \|_{\mathcal{H}^{1}}^{1/3} \| y_{N} \|_{\mathcal{L}^{2}}^{2/3} \right)^{18/7} \diff t \right)^{7/16} \\
		&= C_{y_{0},T,f}  C_{2,0,2,2,18/7}^{9/8} \left( \int_{0}^{T}  \| y_{N} \|_{\mathcal{H}^{1}}^{6/7} \| y_{N} \|_{\mathcal{L}^{2}}^{12/7} \diff t \right)^{7/16} \\
		&\leq C_{y_{0},T,f}  C_{2,0,2,2,18/7}^{9/8} \left( \left( \int_{0}^{T}  \| y_{N} \|_{\mathcal{H}^{1}}^{2} \diff t \right)^{6/14} \left( \int_{0}^{T}  \| y_{N} \|_{\mathcal{L}^{2}}^{3} \diff t \right)^{8/14} \right)^{7/16} \\
		&\leq C_{T,y_{0},f}.
	\end{split}
\end{equation}
The other terms can be bounded in the same way.

Again using the interpolation inequality \eqref{ar_eq_interpolation_6D}, we find
\begin{equation}\label{conv_eq_aux_est_pressure}
	\int_{0}^{T} \| p_{N} \|_{L^{9/5}}^{9/8} \diff t \leq C_{1,0,9/8,9/8,9/5}^{9/8} \int_{0}^{T} \| p_{N} \|_{1,9/8}^{9/8} \diff t.
\end{equation}
Recall that $ \Delta p_{N} = \divv \mathcal{R}_{N}$, where $\mathcal{R}_{N}$ is defined by \eqref{conv_eq_Poisson_p}. Then we have
\begin{align*}
	\| p_{N} \|_{1,9/8} = \| (I - \Delta)^{1/2}  \Delta^{-1} \Delta p_{N} \|_{L^{9/8}} = \| (I - \Delta)^{1/2} \nabla^{-1} \mathcal{R}_{N} \|_{L^{9/8}} \leq \| \mathcal{R}_{N} \|_{L^{9/8}}.
\end{align*}
Here we used the $L^{p}$ theory for singular integrals, e.g. \cite[Chapter V.3.2, Lemma 2, p. 133 f.]{Stein70}.
By \eqref{conv_eq_aux_est_taming} and \eqref{conv_eq_aux_est_NL} it follows that the right-hand side of \eqref{conv_eq_aux_est_pressure} is uniformly bounded in $N$.

 Therefore, by the Eberlein-\v{S}muljan theorem (cf. \cite[Theorem 21.D, p. 255]{Zeidler90IIa}), there is a subsequence $(p_{N_{k}})_{k}$ and a function
$$
	p \in L^{9/8}([0,T];L^{9/5}(\mathbb{R}^{3};\mathbb{R}^{3}))
$$
such that for $k \rightarrow \infty$
\begin{equation}
	p_{N_{k}} \rightarrow p \quad \mathrm{weakly~in~} L^{9/8}([0,T];L^{9/5}(\mathbb{R}^{3};\mathbb{R}^{3})).
\end{equation}	
	Finally, by another application of \eqref{conv_eq_Sobolev}, with $q = 12, r = 9/4$, we find
	$$
		\int_{0}^{T} \| y_{N} \|_{\mathcal{L}^{9/4}}^{12} \diff t \leq C_{T,y_{0},f}.
	$$
	Thus, in the same way as above, we can employ the generalised Lebesgue dominated convergence theorem to conclude that for $\phi \in C_{0}^{\infty}((0,T)\times \mathbb{R}^{3})$
	\begin{align*}
		\lim_{N \rightarrow \infty} &\left| \int_{0}^{T} \int_{\mathbb{R}^{3}} \langle p_{N} \bm{v}_{N} - p \bm{v}, \nabla \phi \rangle_{\mathbb{R}^{3}} \diff x \diff t \right| \\
		&\leq \lim_{N \rightarrow \infty} \left| \int_{0}^{T} \int_{\mathbb{R}^{3}} (p_{N} - p) \langle \bm{v}, \nabla \phi \rangle_{\mathbb{R}^{3}} \diff x \diff t \right| \\
		&\quad + \left| \int_{0}^{T} \int_{\mathbb{R}^{3}}  p_{N} \langle \bm{v}_{N} - \bm{v}, \nabla \phi \rangle_{\mathbb{R}^{3}} \diff x \diff t \right| \\
		&\leq \lim_{N \rightarrow \infty} \left( \| p_{N} \|_{L^{9/5}}^{9/8} \diff t \right)^{8/9} \left( \int_{0}^{T} \| | \bm{v}_{N} - \bm{v} | \cdot | \nabla \phi | \|_{L^{9/4}}^{9} \right)^{1/9} = 0.
	\end{align*}
	In exactly the same way we find a subsequence $(N_{k})_{k \in \N}$ such that
	\begin{align*}
		\pi_{N_{k}} \rightarrow \pi \quad \mathrm{weakly~in~} L^{9/8}([0,T];L^{9/5}(\mathbb{R}^{3};\mathbb{R}^{3})).
	\end{align*}
	The limit $\pi$ satisfies the equation 
	\begin{align*}
		\Delta \pi = 0,
	\end{align*}
	which, combined with the integrability property of $\pi$ yields $\pi \equiv 0$, thus eliminating the "magnetic pressure" from the resulting weak equation as well as the generalised energy inequality. Hence we have shown that the solutions to the tamed MHD equations converge to suitable weak solutions to the MHD equations.
\end{proof}

\begin{remark}
	It is to be expected that existence and uniqueness in the case of a bounded domain $\mbD \subset \R^{3}$ can be shown in a similar way as for the tamed Navier-Stokes equations, as in W. Liu and M. R\"ockner \cite[p. 170 ff.]{LR15}. However, their method uses an inequality \cite[Equation (5.61), p. 166]{LR15}, sometimes called Xie's inequality, for the $L^{\infty}$-norm of a function in terms of the $L^{2}$-norms of the gradient and the Laplacian (more precisely, the Stokes operator). This inequality holds for Dirichlet boundary conditions on quite general domains (cf. R.M. Brown, Z.W. Shen \cite[Equation (0.2), p. 1184]{BS95} for Lipschitz boundaries). If we were to use the method of \cite{LR15}, we would need to have a similar inequality for the magnetic field as well. Unfortunately, to the best of the author's knowledge, such an inequality has not yet been established for the boundary conditions 
	$$
	\bm{B} \cdot \bm{\nu} = 0, \quad (\nabla \times \bm{B}) \times \bm{\nu} = 0 \quad \mathrm{on} \ \partial \mbD
	$$ 
	of the magnetic field (which mean that the boundary is perfectly conducting, cf. \cite[Equation (1.3), p. 637]{ST83}). Here, $\nu$ is the outward unit normal vector of the boundary of the domain. If such an inequality could be shown, the rest of the proof of Liu and R\"ockner should follow in exactly the same way.
\end{remark}

\appendix 

\section{\texorpdfstring{$L^{p}$}{Lp} Solutions and Integral Equations}\label{chap:FJR}
In \cite{FJR72} E.B. Fabes, B.F. Jones and N.M. Riviere proved that the weak formulation for the Navier-Stokes equations on the whole space is equivalent to solving a nonlinear integral equation of the form
\begin{equation}
 u(x,t) + \mathcal{B}(u,u)(x,t) = \int_{\mathbb{R}^{d}} \Gamma(x-y,t) u_{0}(y) \diff y.
\end{equation}
They used this formulation to prove regularity estimates in mixed space-time $L^p$ spaces. Their results play an important role in showing smoothness for smooth initial data for the weak solution of the tamed Navier-Stokes equations in \cite{RZ09a} and in this section we attempt to derive analogous results for the MHD equations.

The main idea of \cite{FJR72} is threefold:
\begin{enumerate}[label=\arabic*., ref = \arabic*.]
    \item Find a divergence-free solution to the heat equation with the initial data of the Navier-                     Stokes problem via Fourier analysis.
    \item Use this solution as a test function in the weak formulation to derive the integral equation.
    \item Prove regularity of the solution to the integral equation (which amounts to estimating the nonlinear term $\mathcal{B}(u,u)$.
\end{enumerate}
We follow their steps with the necessary modifications of the MHD case. Fabes, Jones and Riviere consider mixed space-time $L^{p,q}$-norms on $S_{T} := \R^{d} \times [0,T]$ for $p,q \geq 2$, defined by 
\begin{align*}
	\| u \|_{L^{p,q}(S_{T})} := \sum_{j=1}^{d} \left[ \int\limits_{0}^{T} \left( \int_{\R^{d}} |u_{j}(x,t)|^{p} \diff x \right)^{q/p} \diff t \right]^{1/q},
\end{align*} 
where $\frac{d}{p} + \frac{2}{q} \leq 1$, $d < p < \infty$. The space of functions that have finite $L^{p,q}$-norm is denoted by $L^{p,q}(S_{T})$.
As we are only interested in the case $p=q$, we will occasionally assume this for simplicity in the following. All the results that follow, however, are true also in the more general case.

\subsection{A Divergence-Free Solution to the Heat Equation on the Whole Space}
The first step consists in constructing a symmetric $d \times d$ matrix-valued function $(t,x) \mapsto (E_{ij}(x,t))_{i,j=1}^{d}$ with the following properties:
\begin{enumerate}[label = (\roman*), ref = (\roman*)]
 \item $\Delta E_{ij}(x,t) - \partial_{t} E_{ij}(x,t) = 0$ for all $t > 0$, $x \in \mathbb{R}^{d}$.
 \item $\nabla \cdot E_{i}(x,t) = 0$ for all $x \in \mathbb{R}^{d}$, $t > 0$ where $E_{i}$ is the i-th row of $E_{ij}$, i.e., $E_{i} = (E_{i1}, \ldots, E_{id})$
 \item For $g \in \mathbb{L}^{p}(\mathbb{R}^{d})$, $1 \leq p < \infty$ (i.e., $g \in L^{p}(\mathbb{R}^{d})$ and $\nabla \cdot g = 0$ in the sense of distributions), the following convergence holds:
 $$
    \int_{\mathbb{R}^{d}} E(x-y,t)(g(y))dy \rightarrow g(x) \quad \mathrm{in} \quad L^{p}(\mathbb{R}^{d}) \quad \mathrm{as} \quad t \downarrow 0.
 $$
\end{enumerate}
The function is given by
\begin{equation}
    E_{ij}(x,t) = \delta_{ij} \Gamma(x,t) - R_{i} R_{j} \Gamma(x,t),
\end{equation}
where
$$
    \Gamma(x,t) := \frac{1}{(4 \pi t)^{n/2}}e^{- |x|^{2} / 4t}
$$
denotes the Weierstra{\ss} kernel and $R_{j}$ denotes the Riesz transformation,
$$
    R_{j}(f)(x) := L^{p}-\lim_{\varepsilon \rightarrow 0} c_{j} \int_{|x-y| > \varepsilon} \frac{x_{j} - y_{j}}{|x-y|^{n+1}} f(y) \diff y.
$$
Then one can show that $E_{ij}(x,t) \in C^{\infty}(\mathbb{R}^{d} \times (0,\infty))$, and for $1 \leq p < \infty$ and $g \in \mathbb{L}^{p}(\mathbb{R}^{d})$,
\begin{equation}
    E_{ij}(x,t)(g_{i})(x) = \int_{\mathbb{R}^{d}} \Gamma(x-y,t) g_{i}(y) \diff y, \quad a.e.
\end{equation}
We now want to define the nonlinear operator $\mathcal{B}$. Recall that $E_{i}$ denotes the i-th row of $E_{ij}$. We denote by $\langle u(y,s), \nabla E(x-y,t-s) \rangle$ the $d \times d$-matrix\footnote{Note that $E$ is a $d \times d$-matrix, so taking its gradient we get a tensor of rank 3. By multiplying with the vector $u$, we obtain a tensor of rank 2, i.e., a matrix.}
$$
    \left( \left\langle u(y,s), D_{x_{k}} E_{i} (x-y, t-s) \right\rangle \right)_{i,k = 1}^{d} 
    =  \left( \sum_{j=1}^{d} u_{j}(y,s) D_{x_{k}} E_{ij} (x-y, t-s) \right)_{i,k = 1}^{d},
$$
and define the operator $\bar{\mathcal{B}}(u,w)$ by
\begin{equation}\label{AppA_eq_def_Bbar}
    \bar{\mathcal{B}}(u,w)(x,t) := \int_{0}^{t} \int_{\mathbb{R}^{d}} \langle u(y,s), \nabla E(x-y,t-s) \rangle \cdot w(y,s) \diff y \diff s.
\end{equation}
Note that even though $E_{ij}(\cdot, 1) \notin L^{1}(\mathbb{R}^{d})$, since its Fourier transform is not continuous at the origin, and $L^{1}$ functions have uniformly continuous Fourier transform, cf. \cite[Satz V.2.2, p. 212]{Werner11}, we still have $D_{x_{k}} E_{ij} \in L^{1}(S_{T})$. This implies the following:

\begin{lemma}\label{AppA_thm_Bwelldef} Let $u,w \in L^{p}(S_{T})$, $p \geq 2$. Then $\bar{\mathcal{B}}(u,w) \in L^{p/2}(S_{T})$.
\end{lemma}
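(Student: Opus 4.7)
The plan is to recognize $\bar{\mathcal{B}}(u,w)$ as a (matrix-valued) space--time convolution and then combine H\"older's inequality with Young's convolution inequality.

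First I would extend $u$ and $w$ by zero outside $S_{T}$, and for each triple $(i,j,k)$ set $K_{ijk}(x,t) := D_{x_k} E_{ij}(x,t) \mathbf{1}_{\{t>0\}}$. Since the integration in $s$ runs only over $[0,t]$, the integrand in \eqref{AppA_eq_def_Bbar} is supported where $t-s>0$, so each component of $\bar{\mathcal{B}}(u,w)$ can be rewritten as a genuine space--time convolution:
\begin{equation*}
\bar{\mathcal{B}}(u,w)_i(x,t) \;=\; \sum_{j,k=1}^{d} \bigl( K_{ijk} \ast_{(x,t)} (u_j w_k) \bigr)(x,t),
\end{equation*}
where $\ast_{(x,t)}$ denotes convolution in all $d+1$ variables.

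Next I would estimate the two factors separately. By H\"older's inequality (with exponents $p,p$ on each component, then summing), for fixed $j,k$,
\begin{equation*}
\| u_j w_k \|_{L^{p/2}(S_T)} \;\le\; \| u_j \|_{L^{p}(S_T)} \, \| w_k \|_{L^{p}(S_T)} \;\le\; \| u \|_{L^{p}(S_T)} \| w \|_{L^{p}(S_T)}.
\end{equation*}
For the kernel $K_{ijk}$, I would invoke the fact already recorded in the excerpt that $D_{x_k} E_{ij} \in L^{1}(S_T)$: this follows because $D_x \Gamma(\cdot,t)$ has $L^1$-norm of order $t^{-1/2}$, which is integrable in $t$ on $[0,T]$, and the Riesz-transform part $R_i R_j \Gamma$ yields the same scaling via $L^p$-boundedness of the Riesz transforms together with the semigroup property of $\Gamma$. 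With these two ingredients in hand, Young's convolution inequality (with $1 + 1/(p/2) = 1/1 + 1/(p/2)$) gives
\begin{equation*}
\| K_{ijk} \ast (u_j w_k) \|_{L^{p/2}(S_T)} \;\le\; \| K_{ijk} \|_{L^{1}(S_T)} \, \| u_j w_k \|_{L^{p/2}(S_T)}.
\end{equation*}
Summing over $i,j,k$ and using the two bounds above, I obtain
\begin{equation*}
\| \bar{\mathcal{B}}(u,w) \|_{L^{p/2}(S_T)} \;\le\; C_{T,d}\, \| u \|_{L^{p}(S_T)} \| w \|_{L^{p}(S_T)},
\end{equation*}
which is precisely the claim.

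The only genuine subtlety is the $L^1$ bound on $\nabla E$, since the Riesz-transformed piece $R_iR_j\Gamma$ is not itself integrable in $x$ at a single time slice. The paper explicitly flags this (noting $E_{ij}(\cdot,1)\notin L^1$ while $D_{x_k}E_{ij}\in L^1(S_T)$), so I would simply cite that fact rather than reprove it. Everything else is a direct application of H\"older and Young, so the proof should be short.
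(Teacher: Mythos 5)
Your proposal is correct and follows essentially the same route as the paper: rewrite each component of $\bar{\mathcal{B}}(u,w)$ as a space--time convolution of the kernel $D_{x_k}E_{ij}$ (cut off to $S_T$) with the product $u_j w_k$, bound the product in $L^{p/2}$ by H\"older, and conclude with Young's convolution inequality using $D_{x_k}E_{ij}\in L^{1}(S_T)$. The only cosmetic difference is that you truncate the kernel by $\mathbf{1}_{\{t>0\}}$ rather than $\mathbf{1}_{[0,T]}$ as the paper does; since the data are supported in $s\in[0,T]$ and the norm is taken over $t\in[0,T]$, only the restriction of the kernel to $[0,T]$ is ever used, so this changes nothing.
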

\begin{proof} 
As this statement is not entirely obvious and we will need it below, we prove it here for the reader's convenience. We want to estimate
\begin{align*}
    &\sum_{i} \left[ \int_{0}^{T} \int_{\mathbb{R}^{d}} |\bar{\mathcal{B}}_{i}(u,w)|^{p/2} \diff x \diff t  \right]^{2/p} \\
    &= \sum_{i} \left[ \int_{0}^{T} \int_{\mathbb{R}^{d}} \left| \sum_{j,k} \int_{0}^{t} \int_{\mathbb{R}^{d}} D_{x_{k}} E_{ij}(x-y,t-s) u_{j}(y,s) v_{k}(y,s) \diff y \diff s \right|^{p/2} \diff x \diff t \right]^{2/p}.
\end{align*}
To simplify notations, we let 
$$
F_{ijk}(y,s) := |D_{x_{k}} E_{ij}(y,s) |1_{[0,T]}(s) ~ \mathrm{and} ~ G_{jk}(y,s) := 1_{[0,T]}(s)|u_{j}(y,s) v_{k}(y,s)|.
$$
We denote by $f \oast g$ the convolution in space and time, i.e.,
\begin{align*}
    f \oast g (x,t) := \int_{-\infty}^{\infty} \int_{\mathbb{R}^{d}} f(x-y,t-s) g(y,s) \diff y \diff s.
\end{align*}
Then, using that $s \in [0,t]$ if and only if $t-s \in [0,t]$, the inner integral of the expression we want to estimate can be written and estimated as
\begin{align*}
    &\int\limits_{0}^{T} \int\limits_{\mathbb{R}^{d}} \left| \sum_{j,k} \int\limits_{0}^{t} \int\limits_{\mathbb{R}^{d}} D_{x_{k}} E_{ij}(x-y,t-s) u_{j}(y,s) v_{k}(y,s) \diff y \diff s \right|^{p/2} \diff x \diff t  \\
    &= \int\limits_{0}^{T} \int\limits_{\mathbb{R}^{d}} \left| \sum_{j,k} \int\limits_{-\infty}^{\infty} \int\limits_{\mathbb{R}^{d}} 1_{[0,t]}(t-s)D_{x_{k}} E_{ij}(x-y,t-s) 1_{[0,t]}(s) u_{j}(y,s) v_{k}(y,s) \diff y \diff s \right|^{p/2} \diff x \diff t \\
    &\leq \int\limits_{0}^{T} \int\limits_{\mathbb{R}^{d}} \left( \sum_{j,k} \int\limits_{-\infty}^{\infty} \int\limits_{\mathbb{R}^{d}} 1_{[0,T]}(t-s) \left|D_{x_{k}} E_{ij}(x-y,t-s) 1_{[0,T]}(s) \right| \left| u_{j}(y,s) v_{k}(y,s) \right| \diff y \diff s \right)^{p/2} \diff x \diff t \\
    &= \int\limits_{0}^{T} \int\limits_{\mathbb{R}^{d}} \left( \sum_{j,k} (F_{ijk} \oast G_{jk})(x,t) \right)^{p/2} \diff x \diff t.
\end{align*}
Thus by Young's convolution inequality and the Cauchy-Schwarz-Buniakowski inequality
\begin{align*}
    &\sum_{i} \left[ \int_{0}^{T} \int_{\mathbb{R}^{d}} |\bar{\mathcal{B}}_{i}(u,w)|^{p/2} \diff x \diff t  \right]^{2/p} \leq \sum_{i} \left[ \int_{0}^{T} \int_{\mathbb{R}^{d}} \left( \sum_{j,k} (F_{ijk} \oast G_{jk})(x,t) \right)^{p/2} \diff x \diff t \right]^{2/p} \\
    &= \sum_{i} \| \sum_{j,k} 1_{[0,T]} (F_{ijk} \oast G_{jk}) \|_{L^{p/2}(\mathbb{R}^{d+1})} \leq \sum_{i,j,k} \| (F_{ijk} \oast G_{jk}) \|_{L^{p/2}(\mathbb{R}^{d+1})} \\
    &\leq \sum_{i,j,k} \| F_{ijk} \|_{L^{1}(\mathbb{R}^{d+1})} \| G_{jk} \|_{L^{p/2}(\mathbb{R}^{d+1})} = \sum_{i,j,k} \| D_{x_{k}} E_{ij} \|_{L^{1}(S_{T})} \| u_{j} v_{k} \|_{L^{p/2}(S_{T})} \\
    &= \sum_{i,j,k} \| D_{x_{k}} E_{ij} \|_{L^{1}(S_{T})} \| u_{j} \|_{L^{p}(S_{T})} \| v_{k} \|_{L^{p}(S_{T})} \leq \left( \sum_{i,j,k} \| D_{x_{k}} E_{ij} \|_{L^{1}(S_{T})} \right) \left( \sum_{j,k} \| u_{j} \|_{L^{p}(S_{T})} \| v_{k} \|_{L^{p}(S_{T})} \right) \\
    &= \left( \sum_{i,j,k} \| D_{x_{k}} E_{ij} \|_{L^{1}(S_{T})} \right)  \| u \|_{L^{p}(S_{T})} \| v \|_{L^{p}(S_{T})},
\end{align*}
where we used H\"older's inequality for $p=1, p'=\infty$ for the sum $\sum_{j,k}$ and the embedding $\ell^{\infty}(d) \subset \ell^{1}(d)$. The proof is complete.
\end{proof}
Given this operator, we define the operator $\mathcal{B}(y_{1},y_{2})$ by
\begin{equation}
    \mathcal{B}(y_{1},y_{2}) := \begin{pmatrix}
                                    \bar{\mathcal{B}}(v_{1},v_{2}) - \bar{\mathcal{B}}(B_{1},B_{2}) \\
                                    \bar{\mathcal{B}}(v_{1},B_{2}) - \bar{\mathcal{B}}(B_{1},v_{2})
                                \end{pmatrix}.
\end{equation}
The operator $\mathcal{B}$ occurs naturally when we (formally) use the function $E_{i}$ as a test function in the weak formulation. This will be the subject of the next section.

Before moving on with the theory, let us give a useful Sobolev version of the classical Schauder estimates (for lack of a better name) for the heat equation. To this end, we introduce another short-hand notation for the convolution appearing in the definition of $\bar{\mcB}$, Equation \eqref{AppA_eq_def_Bbar}:
\begin{align*}
	(f \bar{\oast} g) (x,t) := \int_{-\infty}^{t} \int_{\mathbb{R}^{d}} f(x-y,t-s) g(y,s) \diff y \diff s.
\end{align*}
\begin{lemma}\label{AppA_thm_keylemma}
	Let $\gamma \in \R$, $p,q \in (1, \infty)$, and $f \in L^{q}((0,T); W^{\gamma,p})$ be a function. Then the convolution of $f$ with the heat kernel $\Gamma$ lies in the space $ L^{q}((0,T); W^{\gamma + 2,p})$. More precisely,
	\begin{align*}
		\| \Gamma \bar{\oast} f \|_{L^{q}((0,T); W^{\gamma + 2,p})} &\leq c \| f \|_{L^{q}((0,T); W^{\gamma,p})}.
	\end{align*}
	These can also be rewritten into estimates with respect to temporal derivatives.
\end{lemma}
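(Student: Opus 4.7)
The plan is to interpret $u := \Gamma \bar{\oast} f$ as the mild solution of the inhomogeneous heat equation $\partial_{t} u - \Delta u = f$ on $(0,T) \times \mathbb{R}^{d}$ with zero initial data, and recognise the claimed bound as the classical maximal $L^{q}$-regularity estimate for the Laplacian on $L^{p}(\mathbb{R}^{d})$.

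First, I would reduce to the case $\gamma = 0$. The Bessel potential $(I-\Delta)^{\gamma/2}$ is a Fourier multiplier commuting with both convolution in time and convolution against the Gaussian $\Gamma$, so
\[
(I-\Delta)^{\gamma/2}\bigl(\Gamma \bar{\oast} f\bigr) \;=\; \Gamma \bar{\oast} \bigl((I-\Delta)^{\gamma/2} f\bigr).
\]
Thus the inequality for general $\gamma$ follows from the case $\gamma = 0$ applied to $(I-\Delta)^{\gamma/2} f \in L^{q}((0,T);L^{p})$, using the definition $\|g\|_{W^{\gamma,p}} = \|(I-\Delta)^{\gamma/2}g\|_{L^{p}}$ from Section~\ref{sec:DTMHD_ar}.

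Next, for $\gamma = 0$ the heart of the matter is to show that the operators
\[
f \mapsto \partial_{t}(\Gamma \bar{\oast} f), \qquad f \mapsto \partial_{x_{i}}\partial_{x_{j}}(\Gamma \bar{\oast} f)
\]
are bounded on $L^{q}((0,T);L^{p}(\mathbb{R}^{d}))$ for $p,q \in (1,\infty)$. This is the standard parabolic maximal regularity result; I would invoke the operator-valued Mikhlin multiplier theorem (or equivalently the Calder\'on--Zygmund theory applied to the vector-valued singular kernel associated with $\partial_{t}\Gamma$) to obtain the bound. The $L^{p}$-bound for $u$ itself is immediate from Young's convolution inequality since $\Gamma(\cdot,t) \in L^{1}$ with unit mass, and combining these gives $\|u\|_{L^{q}(W^{2,p})} \lesssim \|u\|_{L^{q}(L^{p})} + \|\Delta u\|_{L^{q}(L^{p})} \lesssim \|f\|_{L^{q}(L^{p})}$. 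The final remark about rewriting in terms of temporal derivatives follows at once from the identity $\partial_{t} u = \Delta u + f$.

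The main obstacle is the singular integral estimate in the second step: the kernel $\partial_{t}\Gamma$ (and equally the second-order spatial derivatives $\partial_{x_{i}}\partial_{x_{j}}\Gamma$) fails to lie in $L^{1}(\mathbb{R}^{d}\times \mathbb{R}_{+})$, so one cannot simply apply Young's inequality as in Lemma~\ref{AppA_thm_Bwelldef}. The genuine work is therefore the verification of the Mikhlin-type symbol estimates for $\tau \cdot (i\tau + |\xi|^{2})^{-1}$ and $\xi_{i}\xi_{j} \cdot (i\tau + |\xi|^{2})^{-1}$ (uniformly in the temporal frequency $\tau$) required to conclude boundedness on the mixed Lebesgue space; this is classical and I would simply cite it from, e.g., Krylov's parabolic textbook or Lamberton's original paper, rather than redo the multiplier argument.
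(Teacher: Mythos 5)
Your proposal is correct and follows essentially the same route as the paper: the paper's proof is simply a citation of the classical maximal parabolic regularity result (Ladyzhenskaya--Solonnikov--Ural'ceva for $p=q$, Krylov's \emph{An analytic approach to SPDEs}-style mixed-norm theorem for $p\neq q$), which is exactly the singular-integral/multiplier estimate you identify as the heart of the matter and likewise propose to cite. Your explicit reduction from general $\gamma$ to $\gamma=0$ by commuting $(I-\Delta)^{\gamma/2}$ through the convolution, and your correct observation that the kernel $\partial_t\Gamma$ is not in $L^1$ so Young's inequality cannot substitute for the Calder\'on--Zygmund argument, are details the paper leaves implicit but do not constitute a different method.
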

\begin{proof}
	In the case $p=q$, the result is classical, cf. e.g. the book of O.A. Ladyzhenskaya, V.A. Solonnikov, N.N. Ural'ceva \cite[Chapter IV, Equation (3.1), p. 288]{LSU67}. For $p \neq q$, it was proved by N.V. Krylov in \cite[Theorem 1.1]{Krylov01} using a Banach space version of the Calder\'{o}n-Zygmund theorem.
\end{proof}

\subsection{Equivalence of Weak Solutions to the MHD Equations and Solutions to the Integral Equation}

By $\mathcal{S}(\mathbb{R}^{d+1})$ we denote the Schwartz space of rapidly decreasing functions, and by $\mathcal{S}'(\mathbb{R}^{d+1})$ we denote its dual space, the space of tempered distributions. Our space of test functions for the weak formulation of the MHD equations is
$$
    \mathcal{D}_{T} := \{ \phi(x,t) \in \mathbb{R}^{d} ~|~ \phi_{i} \in \mathcal{S}(\mathbb{R}^{d+1}), \phi_{i} \equiv 0 ~\mathrm{for~} t \geq T, \nabla \cdot \phi (x,t) = 0 ~\forall x,t \}.
$$
\begin{definition}[Weak solution]\label{AppA_def_weak_soln}
    A function $y(x,t) = (v(x,t), B(x,t)) \in \mathbb{R}^{2d}$ is a \emph{weak solution} of the MHD equations with initial value $y_{0} = (v_{0}, B_{0})$ if the following conditions hold:
    \begin{enumerate}[label=(\roman*), ref=(\roman*)]
     \item $v, B \in L^{p,q}(S_{T})$, $p,q \geq 2$.
     \item For all test functions $\tilde{y} = (\tilde{v}, \tilde{B}) \in \mathcal{D}_{T}$ we have the following equality:
     \begin{equation}\label{AppA_eq_def_weak_soln}
        \begin{split}
            \int_{0}^{T} \int_{\mathbb{R}^{d}} \langle y, \partial_{t} \tilde{y} + \Delta \tilde{y} \rangle \diff x &+ \int_{0}^{T} \int_{\mathbb{R}^{d}} \langle v, (\nabla \tilde{v})(v) - (\nabla \tilde{B})(B) \rangle \diff x \diff t \\
            &+ \int_{0}^{T} \int_{\mathbb{R}^{d}} \langle B, (\nabla \tilde{B})(v) - (\nabla \tilde{v})(B) \rangle \diff x \diff t \\
            &= - \int_{\mathbb{R}^{d}} \langle y_{0}(x), \tilde{y}(x,0) \rangle \diff x - \int_{0}^{T} \int_{\mathbb{R}^{d}} \langle f, \tilde{y} \rangle \diff x \diff t.
        \end{split}
     \end{equation}
    \item For $dt$-a.e. $t \in [0,T]$, $\nabla \cdot u(\cdot, t) = 0$ in the sense of distributions.
    \end{enumerate}
\end{definition}
The weak solution satisfies a scalar equation. We cast this scalar equation, by choosing suitable test functions, into an equivalent vector-valued equation. More precisely, we have the following result, corresponding to Theorem (2.1) in \cite{FJR72} for $f=0$ and Theorem (4.4) for $f \neq 0$.
\begin{theorem}[Integral Equation]\label{AppA_thm_equiv}
    Let $y_{0} \in \mathbb{L}^{r}(\mathbb{R}^{d})$, $1 \leq r < \infty$, $p,q \geq 2$, $p < \infty$. If $y \in L^{p,q}(S_{T})$ is a weak solution of the MHD equations with initial value $y_{0}$, then $y$ solves the integral equation
    \begin{equation}
        y + \mathcal{B}(y,y) = \int_{\mathbb{R}^{d}} \Gamma(x-z,t) y_{0}(z)\diff z + \int_{0}^{t} \int_{\mathbb{R}^{d}} E(x-z,t-s)(f(z,s))\diff z \diff s.
    \end{equation}
\end{theorem}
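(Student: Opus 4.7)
The strategy, mirroring Fabes--Jones--Rivière \cite{FJR72} for Navier--Stokes, is to recover the integral equation pointwise by testing \eqref{AppA_eq_def_weak_soln} with suitably cut-off rows of the matrix kernel $E$. Fix $(x_0, t_0) \in \mathbb{R}^d \times (0,T)$, $i \in \{1, \ldots, d\}$, and a smooth cutoff $\chi_\epsilon \in C^\infty(\mathbb{R})$ equal to $1$ on $(-\infty, t_0 - 2\epsilon]$ and $0$ on $[t_0 - \epsilon, \infty)$. To obtain the equation for $v_i(x_0, t_0)$, use $\tilde{y}^{\epsilon}(z,s) := \chi_\epsilon(s)(E_i(x_0 - z, t_0 - s), 0)$, where $E_i$ denotes the $i$-th row of $E$. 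Property (ii) of $E$ ensures $\divv_z E_i(x_0 - \cdot, \cdot) = 0$, so $\tilde{y}^\epsilon$ is admissible in (the completion of) $\mathcal{D}_T$, after a standard Schwartz approximation in space removed by dominated convergence using the Gaussian decay of $\Gamma$. The equation for $B_i$ comes from the symmetric choice $\chi_\epsilon(s)(0, E_i(x_0 - z, t_0 - s))$.

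Four observations then identify each limit as $\epsilon \to 0$. First, property (i), $\partial_t E = \Delta E$, yields $\partial_s \tilde{y}^\epsilon + \Delta_z \tilde{y}^\epsilon = \chi_\epsilon'(s)(E_i(x_0 - z, t_0 - s), 0)$, so the linear term of \eqref{AppA_eq_def_weak_soln} collapses to $\int_0^T \chi_\epsilon'(s) \sum_j \int E_{ij}(x_0 - z, t_0 - s) v_j(z,s) \, dz \, ds$. Since $-\chi_\epsilon'(s)\, ds$ concentrates at $s = t_0$ and $E(\cdot, \epsilon)\ast$ is an approximate identity on solenoidal $L^p$-fields (property (iii)), this converges to $-v_i(x_0, t_0)$ at almost every $(x_0, t_0)$. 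Second, integration by parts in $z$ converts the nonlinear terms into $-\int_0^T \chi_\epsilon(s) \sum_{j,k} \int D_{x_k} E_{ij}(x_0 - z, t_0 - s)\,[v_j v_k - B_j B_k](z,s)\, dz\, ds$, which tends to $-\bar{\mathcal{B}}_i(v,v)(x_0,t_0) + \bar{\mathcal{B}}_i(B,B)(x_0, t_0)$ by dominated convergence with $D_x E \in L^1(S_T)$ and $y \otimes y \in L^{p/2}(S_T)$ (cf.\ the proof of Lemma \ref{AppA_thm_Bwelldef}). Third, the initial-value term $\int E_{ij}(x_0 - z, t_0)(v_0)_j(z)\, dz$ reduces to $\int \Gamma(x_0 - z, t_0)(v_0)_i(z)\, dz$ because the Riesz contribution $\sum_j R_i R_j (\Gamma \ast (v_0)_j)$ vanishes on the Fourier side thanks to $\divv v_0 = 0$. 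Fourth, the forcing term directly equals $\int_0^{t_0} \int E_{ij}(x_0 - z, t_0 - s)(f_v)_j(z,s)\, dz\, ds$.

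Summing the four contributions using the sign convention of \eqref{AppA_eq_def_weak_soln} produces the $i$-th component of the first block of the integral equation, and the symmetric argument for the magnetic slot yields the second block $\bar{\mathcal{B}}(v,B) - \bar{\mathcal{B}}(B,v)$ of $\mathcal{B}(y,y)$. The main obstacle is justifying the pointwise limit in the linear term, namely $\int E_{ij}(x_0 - z, \epsilon) v_j(z, t_0 - \epsilon)\, dz \to v_i(x_0, t_0)$ at almost every $(x_0, t_0)$. This requires a time-continuous representative of $y$; one derives $\partial_t y \in L^{q'}((0,T); W^{-k, p'})$ for a suitable $k \in \N$ directly from \eqref{AppA_eq_def_weak_soln} using the nonlinear and forcing estimates built into the proof of Lemma \ref{AppA_thm_Bwelldef}, which gives $y \in C([0,T]; W^{-k,p'})$, and then combines the $L^p$-Lebesgue differentiation theorem with the smoothing action of $E_{ij}(\cdot, \epsilon)$ to extract the required almost-everywhere limit, completing the proof.
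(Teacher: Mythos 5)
The paper's own ``proof'' of this theorem is a one-line deferral to \cite{FJR72} (``exactly the same way''), and your proposal is a correct reconstruction of precisely that duality argument adapted to the MHD coupling: test the weak formulation \eqref{AppA_eq_def_weak_soln} with time-cut-off rows of $E$ in each slot, use property (i) to kill the backward heat operator, property (ii) for admissibility, property (iii) plus a Lebesgue-point/continuity argument to produce $y(x_0,t_0)$ from the $\chi_\epsilon'$ term, and the $L^1$-bound on $D_xE$ to pass to the limit in the nonlinear terms. Two cosmetic remarks only: the Riesz part $R_iR_j\Gamma$ of $E$ decays merely polynomially, so the Schwartz approximation should be justified via integrability of $D_xE$ and $E(\cdot,t)$ against $y\in L^{p,q}$ rather than ``Gaussian decay''; and no integration by parts is needed for the nonlinear terms, since \eqref{AppA_eq_def_weak_soln} already places the gradient on the test function, so one should simply substitute (and track the resulting signs carefully against the paper's definition of the second block of $\mathcal{B}$).
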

\begin{proof}
This can be proven in exactly the same way as in \cite{FJR72}.
\end{proof}
\subsection{Regularity of Solutions to the Integral Equation}

\begin{theorem}[Regularity]\label{AppA_thm_reg} Let $y$ be a solution to the equation $y + \mathcal{B}(y,y) = f$ and $y \in L^{p,q}(S_{T})$, $\frac{2}{q} + \frac{d}{p} \leq 1$. Let $k$ be a positive integer such that $k+1 < p,q < \infty$. If 
\begin{align*}
    D_{x}^{\alpha} D_{t}^{j} f \in L^{p / (|\alpha| + 2j + 1), q/(|\alpha| + 2j + 1)}(S_{T}) \quad whenever~ |\alpha| + 2j \leq k, 
\end{align*}
then also 
\begin{align*}
    D_{x}^{\alpha} D_{t}^{j} y \in L^{p / (|\alpha| + 2j + 1), q/(|\alpha| + 2j + 1)}(S_{T}) \quad for~  |\alpha| + 2j \leq k.
\end{align*}

\end{theorem}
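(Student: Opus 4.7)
The plan is to extend the Fabes--Jones--Rivi\`ere proof for the Navier--Stokes case \cite{FJR72} to the MHD setting. Since $\mathcal{B}$ is a finite sum of bilinear expressions of the structural form $\bar{\mathcal{B}}(\bm{\phi},\bm{\psi})$ with $\bm{\phi},\bm{\psi}\in\{\bm{v},\bm{B}\}$, each enjoying the same mapping properties as the Navier--Stokes bilinear operator, the MHD cross-terms introduce no new difficulty and the Navier--Stokes argument extends componentwise. I would proceed by induction on $m:=|\alpha|+2j$; the base case $m=0$ is precisely the standing hypothesis $y\in L^{p,q}(S_T)$. In the inductive step, apply $D_x^\alpha D_t^j$ to the integral equation $y = f - \mathcal{B}(y,y)$. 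The contribution $D_x^\alpha D_t^j f$ lies in $L^{p/(m+1),q/(m+1)}(S_T)$ by the hypothesis of the theorem, so the task reduces to estimating $D_x^\alpha D_t^j \mathcal{B}(y,y)$ in the same space.

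Write $\mathcal{B}(y,y) = \nabla\Gamma\bar{\oast}(y\otimes y) - \nabla\bigl(R_i R_k\,\Gamma\bar{\oast}(y\otimes y)\bigr)$ schematically. Since the Riesz transforms commute with $\Gamma\bar{\oast}$ and with differentiation, and are bounded on $L^r$ for $r\in(1,\infty)$ -- a range containing $p/(m+1)$ and $q/(m+1)$ by the hypothesis $k+1<p,q$ -- it suffices to control $\nabla D_x^\alpha D_t^j\bigl(\Gamma\bar{\oast}(y\otimes y)\bigr)$. Use integration by parts in space to pass spatial derivatives through the convolution, and the identity $\partial_t\Gamma=\Delta\Gamma$ combined with further spatial integration by parts to convert each temporal derivative on $\Gamma$ into two spatial derivatives on the argument, at the price of instantaneous boundary contributions at $s=t$. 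In this way, redistribute $m-1$ of the $m$ parabolic-scaled derivatives onto $y\otimes y$ and Leibniz-expand. The outcome is a finite sum of convolution terms of the form $D_x^{\alpha''}D_t^{j''}\nabla\Gamma \,\bar{\oast}\,\bigl(D_x^{\alpha_1'}D_t^{j_1'}y\cdot D_x^{\alpha_2'}D_t^{j_2'}y\bigr)$ with $|\alpha''|+2j''=1$ and $|\alpha_1'|+|\alpha_2'|+2(j_1'+j_2')=m-1$ (each pair $(|\alpha_i'|,j_i')$ strictly smaller than $(|\alpha|,j)$), plus Riesz-projected boundary contributions of the same type but without the time convolution. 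The induction hypothesis places each factor $D_x^{\alpha_i'}D_t^{j_i'}y$ in $L^{p/(|\alpha_i'|+2j_i'+1),q/(|\alpha_i'|+2j_i'+1)}(S_T)$, so H\"older's inequality bounds each pointwise product in $L^{p/(m+1),q/(m+1)}(S_T)$. Lemma \ref{AppA_thm_keylemma} then absorbs the two remaining scaled derivatives on $\Gamma$ with no loss of integrability, and the boundary contributions are controlled in the same space by Calder\'on--Zygmund boundedness of the Riesz transforms.

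The main technical obstacle will be the combinatorial bookkeeping of the boundary terms produced each time a temporal derivative crosses the upper limit of the time convolution defining $\mathcal{B}$, together with verifying that all the exponents $p/(m+1), q/(m+1)$ remain strictly in $(1,\infty)$ throughout the induction -- which is exactly what the hypothesis $k+1<p,q$ guarantees. Both are standard aspects of parabolic Calder\'on--Zygmund analysis, essentially identical to the computations in \cite{FJR72}; the only MHD-specific observation required is that the cross-bilinear terms $\bar{\mathcal{B}}(\bm{v},\bm{B})$, $\bar{\mathcal{B}}(\bm{B},\bm{v})$, $\bar{\mathcal{B}}(\bm{B},\bm{B})$ satisfy the same bilinear estimates as $\bar{\mathcal{B}}(\bm{v},\bm{v})$, so the Navier--Stokes scheme transfers without modification.
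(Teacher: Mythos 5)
Your proposal is correct and follows essentially the same route as the paper's proof: induction on the parabolic order of the derivative, redistribution of all but one derivative onto the arguments of $\bar{\mathcal{B}}$ via spatial integration by parts and the identity $\partial_t\Gamma=\Delta\Gamma$ (with the attendant boundary terms from the time convolution), followed by the generalised H\"older inequality on the scaled exponents, the heat-kernel smoothing lemma, and $L^{p,q}$-boundedness of the Riesz transforms. The observation that the MHD cross-terms obey the same bilinear estimates as the Navier--Stokes term is exactly how the paper handles the extension as well.
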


\begin{proof}
The proof proceeds along the same lines as that of Theorem 3.4 in \cite{FJR72}, with the necessary modifications to the MHD case. Since  $y = f - \mathcal{B}(y,y)$, we only have to show that $D_{x}^{\alpha} D_{t}^{j} \mathcal{B}(y,y) \in L^{p / (|\alpha| + 2j + 1), q/(|\alpha| + 2j + 1)}(S_{T})$.

For $k=1$, our assumptions read $f \in L^{p,q}(S_{T})$ and $D_{x_{i}} f \in L^{p/2,q/2}(S_{T})$ for all $i$. We split the argument into two parts: that for the $v$-part of the equation, i.e., for the equation $v + \mathcal{B}_{1}(y,y) = f_{1}$ and that for the $B$-part of the equation, i.e., $B + \mathcal{B}_{2}(y,y) = f_{2}$. 
Since the terms $\bar{\mcB}_{i}(u,v)(x,t)$ are of the form (with summation convention)
\begin{align*}
	\bar{\mcB}_{i}(u,v)(x,t) = \left( D_{x_{k}} \Gamma \right) \bar{\oast} \left( u_{l} \left[ \delta_{il} v_{k} - R_{i}R_{l} v_{k} \right] \right)(x,t),
\end{align*}
and as our assumption on the coefficients $p,q$ implies $1 < \frac{p}{2}, \frac{q}{2} < \infty$, we can apply Lemma \ref{AppA_thm_keylemma} as well as the $L^{p,q}$-boundedness of the Riesz transform (cf. J.E. Lewis \cite[Theorem 4, p. 226]{Lewis67}) to find that
\begin{align*}
		\| D_{x_{j}} \bar{\mcB}_{i}(u,v) \|_{L^{p/2,q/2}(S_{T})} &\leq C \| u_{l} \left[ \delta_{il} v_{k} - R_{i}R_{l} v_{k} \right] \|_{L^{p/2,q/2}(S_{T})} \\
		&\leq C\| u_{l} \|_{L^{p,q}(S_{T})}\left(  \| \delta_{il} v_{k}\|_{L^{p,q}(S_{T})} + \| R_{i}R_{l} v_{k} \|_{L^{p,q}(S_{T})} \right) \\
		&\leq C \| u \|_{L^{p,q}(S_{T})}  \|  v\|_{L^{p,q}(S_{T})},
\end{align*}
and therefore $D_{x_{i}}\mathcal{B}_{1}(y,y) \in L^{p/2,q/2}(S_{T})$. This in turn implies $D_{x_{i}} v \in L^{p/2,q/2}(S_{T})$. The same argument yields $D_{x_{i}} B \in L^{p/2,q/2}(S_{T})$.

For $k>1$, we use induction over $k$, assuming that the theorem is true for $k$. Now assume 
\begin{align*}
    D_{x}^{\alpha} D_{t}^{j} f \in L^{p / (|\alpha| + 2j + 1), q/(|\alpha| + 2j + 1)}(S_{T}) \quad \mathrm{for}~ |\alpha| + 2j \leq k+1, ~ p,q > k+2.
\end{align*}
Derivatives with respect to multi-indices $(j,\alpha)$ with $2j + |\alpha| \leq k$ are covered by the induction hypothesis. We thus only need to consider the case $2j + |\alpha| = k + 1$.

\emph{Case 1:} $j=0$. In this case, we apply all but one derivative and see that $D_{x}^{\alpha} \mathcal{B}_{1}(y,y)$ and $D_{x}^{\alpha} \mathcal{B}_{2}(y,y)$ each can be written as a sum of terms of the form $D_{x_{m}} \bar{\mathcal{B}}(D_{x}^{\beta} u_{1} , D_{x}^{\gamma} u_{2} )$, $u_{1}, u_{2} \in \{ v,B \}$ with $|\beta| + |\gamma| = k$. The same reasoning as above, since the (induction) hypothesis implies $1 < \frac{p}{k+2} , \frac{q}{k+2} < \infty$, yields
\begin{align*}
    \| D_{x_{m}} \bar{\mathcal{B}}(D_{x}^{\beta} u_{1} , D_{x}^{\gamma} u_{2} ) \|_{L^{p/(k+2), q/(k+2)}(S_{T})} \leq C \| \langle D_{x}^{\beta} u_{1}, D_{x}^{\gamma} u_{2} \rangle_{\R^{d}} \|_{L^{p/(k+2), q/(k+2)}(S_{T})}.
\end{align*}
If we set $\bar{p} := \frac{p}{|\beta|+1}$, $\bar{q} := \frac{p}{|\gamma|+1}$, we can apply the generalised H\"older inequality with $\bar{r} := \frac{p}{k+2}$ because 
$$
	\frac{1}{\bar{r}} = \frac{k+2}{p} = \frac{|\gamma| + 1}{p} + \frac{|\beta|+1}{p} = \frac{1}{\bar{p}} + \frac{1}{\bar{q}}.
$$
This implies that
\begin{align*}
	\| D_{x_{m}} \bar{\mathcal{B}}(D_{x}^{\beta} u_{1} , D_{x}^{\gamma} u_{2} ) \|_{L^{p/(k+2), q/(k+2)}(S_{T})} \leq C \| D_{x}^{\beta} u_{1} \|_{L^{\frac{p}{|\beta|+1}}} \| D_{x}^{\gamma} u_{2} \|_{L^{\frac{p}{|\gamma|+1}}},
\end{align*}
which is finite by the induction hypothesis.

\emph{Case 2:} $j>0$. In this case, we place all the spatial derivatives on the functions $u_{1}, u_{2}$, so we can write
\begin{equation}\label{AppA_eq_regul_pf_derivative_of_Bbar}
	D_{t}^{j} D_{x}^{\alpha} \bar{\mathcal{B}}(u_{1},u_{2}) = \sum_{|\beta| + |\gamma| = |\alpha|} C_{\beta,\gamma} D_{t}^{j} \bar{\mathcal{B}}(D_{x}^{\beta} u_{1}, D_{x}^{\gamma} u_{2}).
\end{equation}
Since by definition and integration by parts we have
\begin{align*}
	\bar{\mathcal{B}}(u,w) &= \int_{0}^{t} \int_{\mathbb{R}^{d}} \langle u(y,s), \nabla E(x-y,t-s) \rangle \cdot w(y,s) \diff y \diff s \\
	&= \int_{\mathbb{R}} \int_{\mathbb{R}^{d}}   1_{[0,t]}(t-s) D_{x_{k}} E_{ij}(x-y,t-s) u_{j}(y,s) w_{k}(y,s) \diff y \diff s \\
	&= - \int_{\mathbb{R}} \int_{\mathbb{R}^{d}}     E_{ij}(x-y,s) 1_{[0,t]}(t-s) D_{x_{k}} 
	\left[ u_{j}(y,t-s) w_{k}(y,t-s) \right] \diff y \diff s,
\end{align*}
by applying $D_{t}$ we get two kinds of terms from the product rule:
\begin{enumerate}[label=(\roman*), ref=(\roman*)]
	\item If the derivative hits the indicator function, we (formally) get terms of the form 
	\begin{align*}
		&\int_{\mathbb{R}} \int_{\mathbb{R}^{d}} \delta_{\{ 0 \}}(t-s)E_{ij}(x-y,t-s)  D_{x_{k}} \left[u_{j}(y,s) w_{k}(y,s) \right] \diff y \diff s \\
		&- \int_{\mathbb{R}} \int_{\mathbb{R}^{d}} \delta_{\{t\}}(t-s)E_{ij}(x-y,t-s)  D_{x_{k}} \left[u_{j}(y,s) w_{k}(y,s) \right] \diff y \diff s \\
		&= \int_{\mathbb{R}^{d}} E_{ij}(x-y,0) D_{x_{k}} \left[ u_{j}(y,t) w_{k}(y,t) \right] \diff y  \\
		&\quad - \int_{\mathbb{R}^{d}} E_{ij}(x-y,t) D_{x_{k}} \left[ u_{j}(y,t) w_{k}(y,t) \right] \diff y \\
		&=  D_{x_{k}} \left[ u_{j}(x,t) w_{k}(x,t)\right] - \int_{\mathbb{R}^{d}} E_{ij}(x-y,t) D_{x_{k}} \left[ u_{j}(y,t) w_{k}(y,t) \right] \diff y,
	\end{align*}
	i.e., in the first term both integrals shrink to a point due to the delta functions and we are left with one spatial derivative of $u$ and $w$. In the second term, we are left with an unproblematic spatial integral.
	\item If the derivative operator hits the function $E_{ij}$, we use the definition of $E_{ij}$ to find $D_{t} E_{ij} = \Delta E_{ij}$. So each temporal derivative is transformed into two spatial derivatives. We can then use integration by parts again to transfer all but one of these (spatial) derivatives to $u$ and $w$, so this term becomes proportional to
	\begin{align*}
		 D_{x_{m}} \bar{\mathcal{B}}(D_{x}^{\beta'}u, D_{x}^{\gamma'}w),
	\end{align*}
	where $|\beta'| + |\gamma'| = 1$.
\end{enumerate}
Proceeding inductively, we see that if we apply $D_{t}$ for $j>1$ times, we have two cases:
\begin{enumerate}[label=(\roman*), ref=(\roman*)]
	\item The time derivative hits the indicator function at least once. In this case we get a term
	\begin{align*}
		D_{t}^{r-s}D_{x}^{\nu}u(x,t) (D_{t}^{s}D_{x}^{\eta}w)(x,t),
	\end{align*}
	where $s \leq r$ and $|\nu| + |\eta| + 2r = 2(j-1)+1 = 2j - 1$. Here we get the factor $j-1$ because we ``lose" one time derivative to the $\delta$-distribution, but we get one more spatial derivative (with the scaling factor 1) due to the derivative from $\nabla E$.
	\item All the derivatives hit $\nabla E$. In this case, by continuing as in case 2 above, transferring all but one derivative onto $u$ and $w$, we get a term
	\begin{align*}
		D_{x_{m}} \bar{\mathcal{B}}(D_{x}^{\beta'}u, D_{x}^{\gamma'} w),
	\end{align*}
	where $|\beta'| + |\gamma'| = 2j - 1$.
\end{enumerate}
With regard to \eqref{AppA_eq_regul_pf_derivative_of_Bbar}, we replace $u$ by $D_{x}^{\beta}u$ and $w$ by $D_{x}^{\gamma} w$, $|\beta|+|\gamma| = |\alpha|$ and apply Lemma \ref{AppA_thm_keylemma} to find
\begin{align*}
	\| D_{t}^{j} D_{x}^{\alpha} \bar{B}(u,w) \|_{L^{\frac{p}{k+2}}(S_{T})} \leq C \sum_{|\beta| + |\gamma| + 2r = k} \sum_{s \leq r} \| (D_{t}^{r-s} D_{x}^{\beta} u) (D_{t}^{s} D_{x}^{\gamma} w) \|_{L^{\frac{p}{k+2}}(S_{T})}.
\end{align*}
The summation runs over $|\beta| + |\gamma| + 2r = k$ since in either case we ``lose" one (spatial) derivative. By the inductive hypothesis for the induction over $k$, we have
\begin{align*}
	D_{t}^{r-s}D_{x}^{\beta} u \in L^{\frac{p}{|\beta| + 2r - 2s + 1}}(S_{T}), \quad  D_{t}^{s}D_{x}^{\gamma} w \in L^{\frac{p}{|\gamma| + 2s + 1}}(S_{T}).
\end{align*}
Noting that 
$$
	\frac{|\beta| + 2r - 2s +1}{p} + \frac{|\gamma| + 2s +1}{p} = \frac{k+2}{p},
$$
we apply the generalised H\"older inequality to find
\begin{align*}
	\| D_{t}^{j} D_{x}^{\alpha} \bar{B}(u,w) \|_{L^{\frac{p}{k+2}}(S_{T})} \leq C \sum_{|\beta| + |\gamma| + 2r = k} \sum_{s \leq r} \| D_{t}^{r-s} D_{x}^{\beta} u \|_{L^{\frac{p}{|\beta| + 2r - 2s + 1}}(S_{T})} \|D_{t}^{s} D_{x}^{\gamma} w \|_{L^{\frac{p}{|\gamma| + 2s + 1}}(S_{T})},
\end{align*}
which is finite.
\end{proof}

\section*{Acknowledgements}
Financial support by the German Research Foundation (DFG) through the IRTG 2235 is gratefully acknowledged. The author would further like to thank Michael R\"ockner for helpful discussions, as well as Robert Schippa and Guy Fabrice Foghem Gounoue.
 \bibliographystyle{plain}
\bibliography{refs}

\end{document}